\newcommand{\E}{\mathbb{E}}
\newcommand{\Z}{\mathbb{Z}}
\newcommand{\Q}{\mathbb{Q}}
\newcommand{\pp}{\mathbb{P}}
\newcommand{\kA}{\mathcal{A}}
\newcommand{\kB}{\mathcal{B}}
\newcommand{\kO}{\mathcal{O}}
\newcommand{\kF}{\mathcal{F}}
\newcommand{\kG}{\mathcal{G}}
\newcommand{\kM}{\mathcal{M}}
\newcommand{\kE}{\mathcal{E}}
\newcommand{\kH}{\mathcal{H}}
\newcommand{\kI}{\mathcal{I}}
\newcommand{\kJ}{\mathcal{J}}
\newcommand{\kN}{\mathcal{N}}
\newcommand{\kL}{\mathcal{L}}
\newcommand{\kV}{\mathcal{V}}
\newcommand{\kU}{\mathcal{U}}
\newcommand{\pn}{\mathbb{P} }
\newcommand{\pnd}{\mathbb{P} }
\newcommand{\en}{\mathbb{E} }
\newcommand{\va}{\mathbb{V}\textrm{ar}}
\newcommand{\cov}{\textrm{Cov}}
\newcommand{\bin}{\mathrm{Bin}}
\newcommand{\lin}{\left[\kern-0.15em\left[}
\newcommand{\rin} {\right]\kern-0.15em\right]}
\newcommand{\linf}{[\kern-0.15em [}
\newcommand{\rinf} {]\kern-0.15em ]}
\newcommand{\ilin}{\left]\kern-0.15em\left]}
\newcommand{\irin} {\right[\kern-0.15em\right[}
\newtheorem{lem}{Lemma}[section]
\newtheorem{prop}[lem]{Proposition}
\newtheorem{theo}[lem]{Theorem}
\newtheorem {rem}[lem] {Remark}
\newtheorem*{ack}{Acknowledgments}
\title[Contact process on the preferential attachment graph]
       {\bf Metastability for the contact process on the preferential attachment graph}
\author{Van Hao Can}
\address{Aix Marseille Universit\'e, CNRS, Centrale Marseille, I2M, UMR 7373, 13453 Marseille, France}
\address{Institute of Mathematics, Vietnam Academy of Science and Technology, 18 Hoang Quoc Viet, 10307 Ha Noi, Viet Nam}
\email{cvhao89@gmail.com}
\keywords{Contact process; Metastability;  Preferential attachment graph.
} 
\subjclass[2010]{82C22; 60K35; 05C80.}
\begin{document}

\maketitle
\begin{abstract}
We consider the contact process on  the preferential attachment graph. The work of Berger,  Borgs,  Chayes and Saberi \cite{BBCS1}  confirmed  physicists' predictions that the contact process starting from a typical vertex   becomes epidemic for an arbitrarily small infection rate $\lambda$ with positive probability. More precisely, they showed that with probability $\lambda^{\Theta (1)}$, it survives for a time exponential in the largest degree. Here we obtain sharp bounds for the density of infected sites at a time close to exponential in the number of vertices (up to some logarithmic factor). In addition, a metastable result for the extinction time is also proved.
\end{abstract}

\section{Introduction}
The paper  aims at  proving  metastability results for the contact process on the preferential attachment random graph. We  improve  the  \cite{BBCS1}'s result in  two  aspects:  obtaining a better bound on the extinction time, and  estimating more accurately the density of the infected sites. Moreover, we also prove a metastable result for the extinction time.

\vspace{0,2cm}
The contact process is one of the most studied interacting particle systems, see in particular Liggett's book \cite{L}, and is also often interpreted as a model to describing how a virus  spreads  in a network. Mathematically, it can be defined as follows: given a locally finite graph $G=(V,E)$ and $\lambda >0,$ the contact process on $G$ with infection rate $\lambda$ is a Markov process $(\xi_t)_{t\geq 0}$ on $\{0,1\}^V.$ Vertices of $V$ (also called sites) are regarded as individuals which are either infected (state $1$) or healthy (state $0$). By considering $\xi_t$ as a subset of $V$ via $\xi_t \equiv \{v: \xi_t(v)=1\},$ the transition rates are given by
\begin{align*}
\xi_t \rightarrow \xi_t \setminus \{v\} & \textrm{ for $v \in \xi_t$ at rate $1,$ and } \\
\xi_t \rightarrow \xi_t \cup \{v\} & \textrm{ for $v \not \in \xi_t$ at rate }  \lambda \, \textrm{deg}_{\xi_t}(v),
\end{align*}
where $\textrm{deg}_{\xi_t}(v)$ denotes the number of edges between $v$ and other infected sites (Note that if $G$ is a simple graph, i.e. contains no multiple edges, then $\textrm{deg}_{\xi_t}(v)$ is just the number of infected neighbors of $v$ at time $t$). Given that $A \subset V,$ we denote by $(\xi_t^A)_{t \geq 0}$ the contact process with initial configuration $A$. If $A=\{v\}$ we simply write $(\xi^v_t)_{t\geq 0}$. 

Originally the contact process was studied on integer lattices or homogeneous trees. More recently, probabilists started  investigating this process on some families of random graphs like the Galton-Watson trees,  small world networks, configuration models, random regular graphs, and preferential attachment graphs, see for instance \cite{P, DJ, CD, CS, D, MVY, MV, BBCS1}.

\vspace{0,2cm}
The preferential attachment  graph (a definition will be given later) is well-known as a pattern of scale-free or social networks. Indeed, it not only shows  the power-law degree sequence of a host in real world networks, but also reflects a wisdom that the rich get richer - the newbies are more likely to get acquainted with more famous people rather than a relatively unknown person. Therefore there has been great interest in this random graph as well as the processes occurring on it, including the contact process. In \cite{BBCS1}, by introducing a new representation of the graph, the authors proved a remarkable result which validated  physicists predictions that the phase transition of the contact process occurs at $\lambda=0$. More precisely, they showed that there are positive constants $\theta$, $c$ and $C$, such that for all $\lambda$ small enough   
\begin{align} \label{rb}
\lambda^c \leq \pn\left(  \xi^u_{\exp(\theta \lambda^2 \sqrt{n})} \neq \varnothing \right) \leq \lambda^C,
\end{align}
where $(\xi^u_t)$ is the contact process starting from a uniformly chosen vertex.

\vspace{0,2 cm}
In this paper we will  improve \eqref{rb} as follows. 
\begin{theo} \label{td} Let $(G_n)$ be the sequential model of the preferential attachment graph with parameters $m \geq 1$ and $\alpha \in [0,1).$ Consider the contact process $(\xi_t)_{t\geq 0}$ with infection rate $\lambda >0$  starting from full occupancy on $G_n.$ Then there exist positive constants $c$ and $C$, such that for $\lambda$ small enough, 
\begin{equation} \label{g2}
 \pn\left(c\lambda^{1+ \frac{2}{\psi}} |\log \lambda |^{-\frac{1}{\psi}}  \leq \frac{|\xi_{t_n}|}{n} \leq C \lambda^{1+ \frac{2}{\psi}} |\log \lambda |^{-\frac{1}{\psi}} \right) =1-o(1) .
\end{equation}
where $\psi = \frac{1 - \alpha }{1 + \alpha}$ and $(t_n)$ is any sequence satisfying  $ t_n  \rightarrow \infty $ and $t_n \leq T_n = \exp \left(\frac{c \lambda^2 n}{(\log n) ^{1/\psi}}\right).$  
\end{theo}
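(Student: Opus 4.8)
\medskip

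\textbf{Step 1: reduction by self-duality.} The contact process is self-dual, so for every vertex $v$ of $G_n$, writing $\xi_{t_n}=\xi^{V}_{t_n}$ for the process from full occupancy, $\pn(v\in\xi_{t_n})=\pn\!\big(\xi^{v}_{t_n}\cap V\ne\varnothing\big)=\pn\!\big(\xi^{v}_{t_n}\ne\varnothing\big)$. Summing over $v$ and dividing by $n$,
\begin{equation*}
\frac{1}{n}\,\en|\xi_{t_n}|=\pn\!\big(\xi^{u}_{t_n}\ne\varnothing\big),
\end{equation*}
with $u$ uniform, which is exactly the quantity in \eqref{rb}. Thus \eqref{g2} splits into (i) the sharp two-sided bound $\pn(\xi^{u}_{t_n}\ne\varnothing)\asymp\lambda^{1+2/\psi}|\log\lambda|^{-1/\psi}$, which sharpens \cite{BBCS1}, and (ii) concentration of $|\xi_{t_n}|/n$ around its mean.

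\textbf{Step 2: the core network and the extinction time.} Call a vertex a \emph{core} if its degree is at least $D_\lambda:=\kappa\,\lambda^{-2}|\log\lambda|$, with $\kappa$ to be tuned. From the P\'olya-urn representation of \cite{BBCS1} and the local description of \cite{BBCS2} one obtains, with probability $1-o(1)$: the number of cores, and the number of vertices lying within bounded graph-distance of a core, are both $\Theta\big(n\,\lambda^{2/\psi}|\log\lambda|^{-1/\psi}\big)$ — the first-moment count on the P\'olya-point graph, which is where the exponent $2/\psi$ and the logarithmic correction appear; and the cores together with the few highest-degree vertices induce a connected subgraph with enough expansion that, started from full occupancy, the infection confined to it dominates a supercritical block (oriented-percolation) process. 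A renormalisation argument of the type used for the contact process on finite graphs then gives $\pn(T_{G_n}\ge T_n)=1-o(1)$, where $T_{G_n}$ is the extinction time; the role of $\kappa$ is to make a star of degree $D_\lambda$ hold the infection for longer than a large fixed power of $1/\lambda$ — the time needed to relay it along a bounded-length path to a neighbouring core — so that the renormalised dynamics is supercritical with the per-site margin producing the exponent $\lambda^{2}n/(\log n)^{1/\psi}$ in $T_n$.

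\textbf{Step 3: the two-sided bound on $\pn(\xi^{u}_{t_n}\ne\varnothing)$.} \emph{Lower bound.} If $u$ lies at distance one from a core $h$, then with probability $\gtrsim\lambda$ the infection from $u$ reaches $h$ before $u$ recovers and, with a further probability bounded away from $0$, builds $h$ up to its quasi-stationary level and joins the infection living on the core network; on $\{T_{G_n}\ge T_n\}\subseteq\{T_{G_n}\ge t_n\}$ it then survives to $t_n$ with probability bounded below. Averaging over the $\Theta\big(n\,\lambda^{2/\psi}|\log\lambda|^{-1/\psi}\big)$ such vertices $u$ yields the lower bound, $t_n\le T_n$ being essential here. \emph{Upper bound.} Split according to whether the ball of radius $r_n$ ($r_n\to\infty$ slowly) around $u$ contains a core. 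If not, then up to time $t_n$ the infection from $u$ stays in the subgraph of vertices of degree $<D_\lambda$, on which every star holds the infection only for a time polynomial in $1/\lambda$; since $t_n\to\infty$ eventually exceeds any such power, a chain-of-stars estimate bounds the survival probability by $o\big(\lambda^{1+2/\psi}|\log\lambda|^{-1/\psi}\big)$ for such $u$ — this is where $t_n\to\infty$ is needed. If the ball does contain a core, then surviving to the diverging time $t_n$ forces the infection to reach the core network, which from a single initially infected vertex happens with probability at most $C\lambda$ (vertices adjacent to more than one core, where this could fail, being too few to matter); summing over $u$ gives the matching upper bound.

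\textbf{Step 4: concentration, and the main difficulty.} For (ii) one estimates $\va|\xi_{t_n}|$ by a second moment, again via duality: $|\xi_{t_n}|=\sum_v\mathbf 1\{\xi^{v}_{t_n}\ne\varnothing\}$ (reading the graphical representation backwards), survival to $t_n$ is decreasing in $t_n$, and for $v,w$ at graph-distance larger than $C\,s$ the dual processes $\xi^{v}$ and $\xi^{w}$ evolve in disjoint regions up to a relaxation time $s=s(\lambda)$ independent of $n$; since moreover the survival probability has essentially stabilised by time $s$, distant pairs contribute $o\big((\en|\xi_{t_n}|)^{2}\big)$ to the variance, while close pairs and the few high-degree vertices are handled separately, so Chebyshev's inequality gives the $1-o(1)$ in \eqref{g2}, uniformly over admissible $(t_n)$; one also needs concentration of the graph statistics used in Step~2, e.g. by an Azuma estimate along the vertex-arrival filtration. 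The crux of the whole argument is the interplay in Step~2 between the extinction-time bound and the threshold $D_\lambda$: one must show at once that the core network is robust enough to carry the infection past $T_n$ and that no sub-threshold structure can carry it past an arbitrary $t_n\to\infty$, and it is precisely matching the two directions that fixes $D_\lambda\asymp\lambda^{-2}|\log\lambda|$, hence the $|\log\lambda|^{-1/\psi}$ in the density; this rests on sharp, degree-uniform estimates for the contact process on a star — its survival time, its quasi-stationary infection density, and its ignition probability from one infected neighbour.
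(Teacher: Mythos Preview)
Your reduction by self-duality (Step~1) and the use of a second-moment/Chebyshev argument for concentration (Step~4) are the right skeleton and match the paper. But two of the load-bearing steps have genuine gaps.

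\textbf{The extinction-time bound (Step~2).} Your ``cores'' have degree $D_\lambda\asymp\lambda^{-2}|\log\lambda|$, a constant in $n$. A star of that size holds the infection only for time $\exp(c\lambda^{2}D_\lambda)=\lambda^{-\Theta(1)}$, again constant in $n$, whereas to relay the infection from one such star to another you must cross graph distances that are, a priori, of order the diameter $\Theta(\log n)$. Your assertion that the cores ``induce a connected subgraph with enough expansion'' is exactly the missing lemma, and it is not true at the $D_\lambda$ scale without further work. The paper avoids this by taking a different threshold: it exhibits (Lemma~\ref{lm1}) $\Theta\big(n/(\log n)^{1/(1-\chi)}\big)$ vertices of degree at least $\varkappa^*\log n$, so that each star survives a time $n^{\Theta(1)}$, comfortably exceeding the diameter; the [CD] mechanism then gives survival to $T_n=\exp\big(c\lambda^{2}n/(\log n)^{1/\psi}\big)$ (Proposition~\ref{pb}). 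Note also that your own exponent is internally inconsistent: with the $D_\lambda$ threshold the total core degree is $\Theta\big(n\lambda^{2/\psi}|\log\lambda|^{-1/\psi}\big)$, which would produce $\exp\big(c\lambda^{2+2/\psi}n|\log\lambda|^{-1/\psi}\big)$, not the claimed $T_n$; and your sentence that the number of cores and the number of their neighbours are both $\Theta\big(n\lambda^{2/\psi}|\log\lambda|^{-1/\psi}\big)$ is wrong for the first quantity (it is smaller by a factor $D_\lambda$).

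\textbf{The upper bound on the survival probability (Step~3).} Your dichotomy ``no core in $B(u,r_n)$ $\Rightarrow$ the infection stays among sub-threshold vertices'' is false as stated: nothing prevents $(\xi^{u}_t)$ from leaving $B(u,r_n)$ and finding a core outside. Even if you confine to a tree-like neighbourhood with all degrees below $D_\lambda$, the sentence ``a chain-of-stars estimate bounds the survival probability by $o(\lambda^{1+2/\psi}|\log\lambda|^{-1/\psi})$'' hides a nontrivial computation: one must control contributions from vertices at distance $2,3,\dots$ from a core (probability $\asymp\lambda^{2},\lambda^{3},\dots$ of reaching it, but correspondingly more such vertices), and these contributions are of the \emph{same} order as the main term, not $o(\cdot)$. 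The paper handles the upper bound by an entirely different route: it uses the local weak convergence of $B_{G_n}(u,R)$ to the ball in the P\'olya-point graph (Theorem~\ref{t2.3}), then shows (Lemma~\ref{lsd}) that this ball is stochastically dominated by a ball in a Galton--Watson tree with explicit offspring laws $\mathbf q_0,\mathbf q_1,\dots$, and finally invokes the sharp upper bound of \cite{MVY} on such trees to get $\pn\big((u,0)\leftrightarrow B_{G_n}(u,R)^{c}\times\mathbb R_+\big)\le C\lambda^{1+2/\psi}|\log\lambda|^{-1/\psi}$ for a fixed $R=R(\psi)$. This is where the precise exponent and log-power come from; your argument does not produce them.

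For the lower bound you are closer in spirit, but note that ``$u$ adjacent to a core, then reach the core with probability $\gtrsim\lambda$'' is not enough by itself to connect to the long-time survival: a core of size $D_\lambda$ only lives for time $\lambda^{-\Theta(1)}$. The paper instead shows (Lemmas~\ref{lbn}--\ref{ldeh}) that with probability $\gtrsim\lambda\,\varphi(\lambda)^{-1/\psi}$ there is, inside $B_{G_n}(u,(\log\log n)^{2})$, a chain of vertices of \emph{geometrically increasing} degrees starting at $\asymp\varphi(\lambda)$ and ending at $\ge\varkappa^{*}\log n$, along which Lemma~\ref{lst} can be applied repeatedly; the terminal vertex then hooks into the set $V_n^{*}$ of Proposition~\ref{pb}.
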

By a well-known property of the contact process called self-duality (see \cite{L}, Section I.1) for any $t \geq 0$  we have 
\begin{align} \label{sdl}
 \sum \limits_{v \in V_n} 1(\{\xi^v_t \neq \varnothing\}) \mathop{ \Large =}^{(\kL)} |\xi_t| .
\end{align}
Therefore the survival probability as in \eqref{rb} is just the expected value of the density of infected sites as in  Theorem \ref{td}, so that our result is a stronger form of  the one in \cite{BBCS1}. Additionally we get a more precise estimate of the density and we allow $(t_n)$ to be larger. 

\vspace{0,2cm}
Since the empty configuration is the unique absorbing state of the contact process on finite graphs, the contact process on $G_n$ always dies out. Theorem \ref{td} shows that before dying, the contact process remains for a long time in a stationary situation in the sense that the density of infected sites is stable for a time streched exponential in the number of vertices. This metastable behavior for the contact process has been observed in some examples, such as the finite boxes (see \cite{L}, Section I.3), the configuration models (see \cite{CD,CS,MVY}), the random regular graphs (see \cite{LS}). 

\vspace{0.2 cm}

We also prove the following result.
\begin{prop} \label{cvelpa} Let $\tau_n$ be the extinction time of the contact process on the sequential preferential attachment graph with infection rate $\lambda>0$ starting from full occupancy. Then 
the following convergence in law holds
$$\frac{\tau_n}{\en(\tau_n)}\ \mathop{\longrightarrow}^{(\mathcal L)}_{n\to \infty}  \  \kE(1),$$
with $\kE(1)$ an exponential random variable with mean one. 
\end{prop}
In the proof of Proposition \ref{cvelpa}, see in particular \eqref{cxivxi} and \eqref{avbn}, we show  that after time $\exp (\log^2 n )$, either the contact process dies out, or it  equals  the contact process starting  from full occupancy (and thus the initial configuration is forgotten). Since $\exp (\log^2 n )$ is much smaller than the extinction time, this shows the presence of the metastability of the contact process. 

This kind of meatastability result  has been studied for different random processes, see for instance \cite{BH,OV} for the history and recent developments of this problem. In particular, it  has been proved for the contact process on finite boxes (see \cite{CGOV,M}), and finite homogeneous trees (see \cite{CMMV}).  Our method for proving Proposition \ref{cvelpa}  is rather general and only requires some simple hypothesis on the maximal degree and the diameter of the graph, which is satisfied in most scale-free random graphs models, like 
the configuration model with power law distribution,  or the preferential attachment graph.   
We refer  to Proposition \ref{cpcel}  for more details. 

\vspace{0.2 cm}

Now let us make some comments on the  proof of Theorem \ref{td}.

\noindent First, to obtain the time $T_n$ we will use the maintenance mechanism as in \cite{CD} instead of the one in \cite{BBCS1}. In the latter the authors used that in the preferential attachment graph the maximal degree is of order $\sqrt{n}$, plus the well-known fact that for any vertex $v$, the process survives a time exponential in the degree of $v$, once it is infected, yielding  \eqref{rb}. In the former, on the other hand, when considering the contact process on the configuration model, Chatterjee and Durrett employed many vertices with total degree of order $n^{1-\varepsilon}$, for any $\varepsilon >0$, and derived a much better bound on the extinction time. Here, our strategy is to find vertices with degree larger than $Cd(G_n)$, where  $C=C(\lambda)>0$ is a constant and $d(G_n)$ is the diameter of $G_n$, which is of order $\log n$. Thanks to Proposition 1 in \cite{CD}, we can deduce that the virus propagates along these vertices for a time exponential in  their total degree. Moreover, the degree distribution of the graph, denoted by {\bf p}, has a  power-law with exponent $\nu=  2+1/\psi $. Thus the number of these  vertices is of order $n (\log n)^{1- \nu}$ and their total degree is of order $n (\log n)^{-1/\psi}$, which explains the bound on $ t_n$ in Theorem \ref{td}.

It is worth noting  that for any graph with order $n$ edges (including $G_n$), the extinction time of the contact process is w.h.p. smaller than $\exp(Cn)$, for some $C>0$, see for instance Lemma 3.4 in \cite{C}. Hence our bound on $t_n$ is nearly optimal.

\vspace{0,2cm}
\noindent Secondly, to gain the precise estimate on the density, we use ideas from \cite{P, BBCS1, CD, MVY}: if the virus starting at a typical vertex wants to survive a long time, it has to infect a big vertex of degree  significantly larger than $\lambda^{-2}$. Then the virus is likely to survive in the neighborhood of this vertex for a time which is long enough to infect another big vertex, and so on. We can see that  the time required for a virus to spread from one big vertex to another is at least   $\lambda^{-\Theta(1)}$ (corresponding to the case when the distance between them is constant). Besides, it was shown that if $\deg(v) \geq K/ \lambda^2$, then the survival time of the contact process on the star graph formed by $v$ and it neighbors is about $\exp(cK)$. Hence the degree of big vertices should be larger than $C \lambda^{-2}|\log \lambda|$. Then we consider  $\Lambda$, the set of vertices which have a big neighbor. The probability for a vertex in $\Lambda$ to infect its big neighbor is of order $\lambda$. Moreover,  we will show in Section 4 that any big vertex has a positive probability to  make the virus survive up to time $T_n$. This means that the probability for the dual process starting from any vertex in $\Lambda$ to be active at time $T_n$ is of order $\lambda$. 
  Therefore the density of vertices from where the dual process survives up to $T_n$ is about $\lambda $ times the density of $\Lambda$. This is of order $\lambda \times \textbf{p} \big( [\lambda^{-2}|\log \lambda|, \infty \big ) \big) \lambda^{-2}|\log \lambda| \asymp \lambda^{1+ \frac{2}{\psi}} |\log \lambda |^{-\frac{1}{\psi}}$ yielding the desired lower bound.

We notice that the density of big vertices is of order $\textbf{p} \big( [\lambda^{-2}|\log \lambda|, \infty \big ) \big)$, which is $o(\lambda^{1+ \frac{2}{\psi}} |\log \lambda |^{-\frac{1}{\psi}})$. Thus it is  not optimal. Hence we need to consider also their neighbors. In fact this idea of using $\Lambda$ was first introduced in \cite{CD} for the configuration model.

 For the upper bound, we adapt the proof in \cite{MVY} for Galton-Watson trees, see the appendix.

\vspace{0,2 cm}
\noindent It is interesting to note also that if we consider the contact process on the configuration model with the same  power-law degree distribution,  the density is of order $\lambda^{1+ \frac{2}{\psi}} |\log \lambda |^{-\frac{2}{\psi}}$ (see \cite[Theorem 1.1]{MVY}), which is slightly smaller than the one in \eqref{g2}. This difference  is due to the fact that the distance between big vertices in the configuration model is about $|\log \lambda|$, instead of constant here. 

\vspace{0,2 cm}
\noindent Finally, the above strategy  works properly when studying the contact process on the P\'olya-point graph. In fact, proving the following result helped us pave the way to potential solutions for Theorem \ref{td}. 
\begin{prop} \label{prp}
Let $(\xi^{o}_t)$ be the contact process on the P\'olya-point graph with infection rate $\lambda >0$ starting from the root $o$. There exist positive constants $c$ and $C,$ such that for $\lambda$ small enough, 
$$c\lambda^{1+ 2/ \psi} |\log \lambda |^{-1/\psi} \leq \mathbb{P}(\xi^{o}_t \neq \varnothing \, \, \forall t) \leq C \lambda^{1+ 2/ \psi} |\log \lambda |^{-1/\psi}.$$
\end{prop}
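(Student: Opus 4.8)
The plan is to establish the two inequalities separately, following the heuristic above. Fix a large constant $C_0$ and call a vertex \emph{big} if its degree is at least $\kappa_\lambda:=C_0\,\lambda^{-2}|\log\lambda|$; let $\Lambda$ be the set of vertices having a big neighbour. The value $\kappa_\lambda$ is dictated by Proposition~1 of \cite{CD}: once the centre of a star of degree $d\geq\kappa_\lambda$ is infected, the infection persists in that star --- and, by monotonicity, at least as long inside the P\'olya-point graph as in the isolated star --- for a time of order $\exp(c\lambda^2 d)\geq\lambda^{-cC_0}$, which for $C_0$ large is long enough to reinfect all of the neighbours of the centre with probability close to $1$. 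The core of the lower bound is then an ``engine'' for survival: there is $\delta>0$, independent of $\lambda$, such that if a big vertex $v$ is infected at time $0$, then $\mathbb{P}(\xi^{v}_t\neq\varnothing\ \forall t)\geq\delta$.

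I would prove this by showing that the set of big vertices that are ever infected stochastically dominates a supercritical branching process. The structural input is the ancestral hierarchy of the P\'olya-point graph: every big vertex $v$ has $m\geq2$ neighbours older than itself (those reached along $v$'s own $m$ attachment edges), which therefore are, with probability bounded below by a constant, also big. By the previous paragraph $v$, once infected, stays ``lit'' long enough to infect each of these older neighbours with probability close to $1$; moreover it fires to each of them within time $O(1/\lambda)$ of its own infection, comfortably inside its window $\lambda^{-cC_0}$, so the infection fronts never risk extinction. Hence the process of infected big vertices dominates a Galton--Watson process whose offspring law is stochastically at least $\mathrm{Bin}(m,q_\lambda)$ with $q_\lambda\to1$; its mean exceeds $1$ for $\lambda$ small, so it survives with probability bounded below uniformly in $\lambda$. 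Granting this, the lower bound on $\mathbb{P}(\xi^o_t\neq\varnothing\ \forall t)$ is a one-step estimate: since $\mathbf{p}$ has power-law exponent $\nu=2+1/\psi$, the probability that the root lies in $\Lambda$ is of order $\kappa_\lambda\,\mathbf{p}\big([\kappa_\lambda,\infty)\big)\asymp\kappa_\lambda^{-1/\psi}\asymp\lambda^{2/\psi}|\log\lambda|^{-1/\psi}$; conditionally on this, the root infects its big neighbour before dying (which happens at rate $1$) with probability $\asymp\lambda$, and then the ``engine'' gives survival with probability $\geq\delta$. Multiplying yields the lower bound $c\,\lambda^{1+2/\psi}|\log\lambda|^{-1/\psi}$.

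For the upper bound, survival forces the infection to reach a big vertex: on the complementary event the process never leaves the subgraph spanned by the non-big vertices, and this is where the local structure of $G_n$ enters --- that subgraph is stochastically dominated by a Galton--Watson tree for which the estimates of \cite{MVY} show that the contact process started away from every big vertex survives only with super-polynomially small, hence negligible, probability. It then remains to bound $\mathbb{P}(o\text{ ever infects a big vertex})$, and this again goes through the domination of the P\'olya-point graph by a Galton--Watson tree and the machinery of \cite{MVY}, which is built precisely to control the competition between long paths of bounded-degree vertices (each propagation step costing $\asymp\lambda$) and the occasional moderately high-degree vertex (which, once reached, leaks the infection to all its neighbours), and which makes the contributions of big vertices at distance $r$ from $o$ summable, with the dominant term ($r=1$) of order $\lambda\,\kappa_\lambda^{-1/\psi}\asymp\lambda^{1+2/\psi}|\log\lambda|^{-1/\psi}$.

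The main obstacle is the ``engine'' statement: that a single infected big vertex keeps the infection alive \emph{forever} with probability bounded away from $0$ uniformly in $\lambda$. This is where the self-similar, hierarchical structure of the P\'olya-point graph is genuinely needed --- to produce the $m\geq2$ older, big neighbours of every big vertex and enough independence to iterate the construction --- together with the sharp star-survival estimate of \cite{CD}; it is also the step that fixes the threshold $\kappa_\lambda\asymp\lambda^{-2}|\log\lambda|$ and hence the exact power $-1/\psi$ of $|\log\lambda|$ in the statement. The upper bound, by contrast, is largely a matter of correctly setting up the domination of the P\'olya-point graph by a Galton--Watson tree and then invoking \cite{MVY}.
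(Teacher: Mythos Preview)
Your upper bound outline is essentially what the paper does: it proves a stochastic domination $B_T(o,R)\preceq B_{\mathbb{T}_R}(o,R)$ by a Galton--Watson ball (Lemma~\ref{lsd}) and then imports the \cite{MVY} estimate. Your description is a bit imprecise --- the argument is not really ``survival forces reaching a big vertex'' but rather bounding $\mathbb{P}\big((o,0)\leftrightarrow B_T(o,R)^c\times\mathbb{R}_+\big)$ directly --- but the mechanism is the same.

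The lower bound, however, is handled differently in the paper, and your ``engine'' has a genuine gap. You assert that the older neighbours of a big vertex are big ``with probability bounded below by a constant'', and then conclude that infected big vertices dominate a Galton--Watson process with offspring law $\mathrm{Bin}(m,q_\lambda)$, $q_\lambda\to1$. These do not match: if the structural probability that a given $L$-child is big is only some constant $p\in(0,1)$, then the mean offspring is $\le m p\,q_\lambda$, and supercriticality requires $p>1/m$, which you never verify. Moreover, bigness of the different $L$-children is not independent (they share the parent position $x_v$), so the offspring law is not binomial; and conditioning on $\deg(v)\ge\kappa_\lambda$ biases $\gamma_v$ upward, which works \emph{against} the child inequality $\gamma_w\ge U^\psi\gamma_v$ you implicitly need. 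The argument could perhaps be repaired by conditioning on positions rather than degrees (all $L$-descendants of a vertex with small $x_v$ automatically have small position), but as written the supercriticality step is missing.

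The paper sidesteps this entirely. Instead of a branching comparison at a fixed threshold $\kappa_\lambda$, it uses a single chain along the leftmost ray $w_i=(0,1,\dots,1)$: since $x_i=U_i\cdots U_1 U_0^\chi$ decreases geometrically, one shows (Lemma~\ref{ehd}) that on an event of probability $\gtrsim\varphi(\lambda)^{-1/\psi}$ there is a subsequence $(w_{j_\ell})_{\ell\ge1}$ with $\deg(w_{j_\ell})\ge 2^{\ell+1}\varphi(\lambda)/\psi$ and $d(w_{j_\ell},w_{j_{\ell+1}})$ bounded. Because the degrees \emph{grow} geometrically, the failure probabilities $\exp(-c^*\lambda^2\deg(w_{j_\ell}))$ are summable, and iterating Lemma~\ref{lst} gives survival with probability $\ge 1/2$ once $w_1$ is lit. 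This replaces your delicate ``branching at a fixed threshold is supercritical'' step by a Borel--Cantelli argument along one ray, which is both simpler and immune to the independence and mean--offspring issues above. The one-step estimate $\mathbb{P}(o\in\Lambda)\asymp\varphi(\lambda)^{-1/\psi}$ and the factor $\lambda$ for the root to light its big neighbour are then exactly as in your sketch.
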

 Theorem \ref{td} (resp. Proposition \ref{prp}) implies  that for all $\lambda >0$, the contact process becomes epidemic (resp. survives forever) with positive probability. We say that the critical values of the contact process on the preferential attachment graph and its weak limit are all zero.  This is a new  example of a more general  phenomena that  there is a relationship between the phase transition for the contact process on a sequence of finite graphs and the one on its weak local limit in the sense of  Benjamini--Schramm's convergence. Here are some known results on this topic: the contact process on the integer lattice $\mathbb{Z}^d$ and on finite boxes $\llbracket 1, n\rrbracket ^d$ exhibit a  phase transition at the same critical value $\lambda_c = \lambda_c(d)$, see \cite[Part I]{L} for all $d \geq 1$. The phase transition of the process on the random regular graph of degree $d$ and its limit, the homogeneous tree $\mathbb{T}_d$, occurs at the same constant $\lambda_1 (\mathbb{T}_d)$, see  \cite{MV}. The phase transition  of the contact process on $\mathbb{T}_d^{\ell}$ (the $d$-homogeneous tree of height $\ell$) and its limit, the canopy tree $\mathbb{C} \mathbb{T}_d$, happens at $\lambda_2(\mathbb{T}_d)$, see  \cite{CMMV, MV}. The critical value of the contact process on the configuration model with heavy tail degree distributions or on its limit, the Galton-Watson tree, is zero, see  \cite{P, CD, MVY, MMVY, CS}. 
 
 \vspace{0,2 cm}
Now the paper is organized as follows. In the next section,  based on \cite{BBCS2}, we give the definition of the sequential model of the preferential attachment graph as well as its weak local limit, the P\'olya-point graph. We also prove preliminary results on the graph structure and fix some notation. In Section 3, we prove Proposition \ref{prp}. The main theorem \ref{td} and Proposition \ref{cvelpa} are proved in  Sections 4 and 5 respectively.     
\section{Preliminaries}
\subsection{Construction of the random graph and notation.}
Let us give a definition following \cite{BBCS2} of the sequential model of the preferential attachment graph with parameters $m \geq 1$ and $\alpha \in[0,1)$. We  construct a sequence of graphs ($G_n$) with vertex set  $V_n =\{v_1, \ldots, v_n\}$ as follows.

First $G_1$ contains one vertex $v_1$ and no edge, and $G_2$ contains $2$ vertices $v_1, v_2$ and $m$ edges connecting them. Given $G_{n-1},$ we define $G_{n}$  the following way.  Add the vertex $v_n$  to the graph, and draw edges between $v_n$ and $m$ vertices $w_{n,1}, \ldots ,w_{n,m}$ (possibly with repetitions)  from $G_{n-1}$  as follows: with probability $\alpha ^{(i)}_n ,$ the vertex $ w_{n,i}$ is chosen uniformly at random from  $V_{n-1}$ where 
\begin{displaymath}
\alpha ^{(i)}_n = \left \{ \begin{array}{ll}
\alpha & \textrm{ if } i=1, \\
 \alpha \frac{2m(n-1)}{2m(n-2)+2m \alpha + (1- \alpha)(i-1)} = \alpha + \mathcal{O}(n^{-1}) & \textrm{ if } i \geq 2.
\end{array} \right.
\end{displaymath}
 Otherwise, $w_{n,i}=v_k$ with probability
$$ \frac{\textrm{deg}^{(i)}_{n-1}(v_k)}{Z^{(i)}_{n-1}},$$
where

$$\textrm{deg}^{(i)}_{n-1}(v_k) = \textrm{deg}_{n-1}(v_k) + \#\{1 \leq j \leq i-1: w_{n,j} =v_k \},$$
is the degree of $v_k$ before choosing $w_{n,i}$, and  
$$Z^{(i)}_{n-1} = \sum \limits_{k=1}^{n-1}\textrm{deg}^{(i)}_{n-1}(v_k) = \sum \limits_{k=1}^{n-1}\textrm{deg}_{n-1}(v_k)+i-1 = 2m(n-2)+ i-1,$$
with 
$\deg_{n-1}(v_k)$  the degree of $v_k$ in $G_{n-1}$.

This construction might seem less natural than in the  independent model where  with probability $\alpha$ we choose $w_{n,i}$  uniformly from $V_{n-1}$  and with  probability $1- \alpha$  it is chosen according to a simpler rule: $w_{n,i}=v_k $ with probability $  \deg_{n-1}(v_k)/2m(n-2)$.  However the sequential model constructed above is easier to analyze because it is exchangeable, and as a consequence it admits an alternative representation which contains more independence.  In \cite{BBCS2}, the authors called it the P\'olya urn representation which we now recall in the following theorem. To this end, we denote by $\beta(a,b)$  the Beta distribution, whose density is proportional to $x^{a-1}(1-x)^{b-1}$ on $[0,1]$, and by $\Gamma(a,b)$ the Gamma distribution,  whose density is proportional to $x^{a-1} e^{-bx}$ on $[0, \infty)$. For any $a<b$, $\kU([a,b])$ stands for the uniform distribution on  $[a,b]$.  
\begin{theo} \cite[Theorem 2.1]{BBCS2}\label{tb}
Fix $m \geq 1$, $\alpha \in [0,1)$ and $n \geq 1$. Set  $r=\alpha/(1- \alpha), \psi_1=1$, and let $\psi_2,\ldots,\psi_n$ be independent random variables with  law
$$\psi_j \sim \beta (m+2mr, (2j-3)m+ 2mr(j-1)).$$
Define
$$\varphi_j = \psi_j \prod \limits_{t=j+1}^n(1- \psi_t), \quad \quad S_k = \sum \limits_{j=1}^k \varphi_j, \, \textrm{ and } \quad I_k = [S_{k-1},S_k).$$
Conditionally on $\psi_1,\ldots,\psi_n$, let $\{U_{k,i}\}_{k=1,\ldots,n, i=1,\ldots,m}$ be a sequence of independent random variables, with $U_{k,i} \sim \kU([0,S_{k-1}])$. Start with the vertex set $V_n=\{v_1, \ldots, v_n\}$. For $j< k$, join $v_j$ and $v_k$ by as many edges as the number of indices $ i \in \{1,\ldots,m\},$ such that  $U_{k,i} \in I_j$. Denote the resulting random graph by $G_n.$ 

Then $G_n$ has the same distribution as the sequential model of the preferential attachment graph.
\end{theo}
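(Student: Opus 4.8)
The plan is to establish the representation by induction on $n$, combining two ingredients: the self-similarity of the interval construction under the addition of one vertex, and the exact de Finetti-type structure of the P\'olya urn hidden inside the sequential model. For $n=1,2$ the claim is checked by hand: for $n=2$ one has $\psi_1=1$, so $I_1=[0,S_1)$ covers the whole segment on which the points $U_{2,i}\sim\kU([0,S_1])$ live, forcing all $m$ of them into $I_1$, hence joining $v_1$ and $v_2$ by exactly $m$ edges as in $G_2$.

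For the inductive step, first observe how the construction behaves when we pass from $(\psi_1,\dots,\psi_{n-1})$ to $(\psi_1,\dots,\psi_n)$ with a fresh independent $\psi_n$: every $\varphi_j$ and every $S_j$ with $j\le n-1$ is multiplied by the common factor $1-\psi_n$, a new block $I_n=[S_{n-1},S_n)$ is appended at the right end, and nothing else changes. Since for $k\le n-1$ the variable $U_{k,i}$ is uniform on $[0,S_{k-1}]$, it rescales by the same factor, so the event $\{U_{k,i}\in I_j\}$ is unaffected; consequently the subgraph induced by the construction on $\{v_1,\dots,v_{n-1}\}$ is exactly the graph produced by the construction at level $n-1$, which by the induction hypothesis has the law of the sequential model's $G_{n-1}$, and is independent of $\psi_n$. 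It remains to identify, conditionally on this subgraph, the law of the way the last vertex $v_n$ attaches.

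In the construction $v_n$ throws its $m$ edges independently, each landing in $I_j$ --- i.e.\ joining $v_j$ --- with conditional probability $\varphi_j/S_{n-1}=\psi_j\prod_{t=j+1}^{n-1}(1-\psi_t)$, the ``stick-breaking weight'' of $v_j$ after $n-1$ arrivals, which depends only on $\psi_1,\dots,\psi_{n-1}$. The sequential model instead draws $w_{n,1},\dots,w_{n,m}$ one after another with preferential reinforcement mixed with a uniform component. The bridge is that the whole array of attachment choices made throughout the sequential construction is a generalized P\'olya urn \emph{with immigration}: the colours are the vertices, a new colour $v_j$ entering at step $j$ with a budget made of its $m$ fresh half-edges together with a ``phantom'' weight of order $2mr$ (where $r=\alpha/(1-\alpha)$), the latter being exactly what turns the $\alpha$-biased uniform choice into ``hit a phantom ball of $v_j$''. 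For such urns the whole effect of the reinforcement is carried by a family of \emph{independent} latent Beta variables, conditionally on which the successive draws are i.i.d.\ with precisely those stick-breaking weights; identifying the latent variable of $v_j$ as $\beta(m+2mr,\,(2j-3)m+2mr(j-1))$ --- first parameter the entering budget $m+2mr$, second parameter the budget already present, namely the real-degree sum $2m(j-2)+m$ once the $m$ half-edges $v_j$ sends down to older vertices are counted, plus the phantom mass $2mr(j-1)$ --- and reading the stick-breaking picture as ``drop a uniform point on $[0,S_{k-1}]$, split into blocks $I_j$ of length $\varphi_j$'' closes the induction.

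The heart of the matter, and the step I expect to be the real obstacle, is this exact urn identification at \emph{finite} $n$: proving that the latent variables $(\psi_j)$ are genuinely independent with exactly the stated Beta laws, and --- crucially --- that the $\mathcal{O}(n^{-1})$ correction built into $\alpha_n^{(i)}$ for $i\ge 2$ is precisely calibrated so that averaging over the $m$ sub-steps of one vertex's birth leaves no lower-order error relative to the clean ``phantom ball'' rule. I would carry this out with Beta--Gamma algebra: write $\psi_j=\Gamma_j'/(\Gamma_j'+\Gamma_j'')$ with independent $\Gamma_j'\sim\Gamma(m+2mr,1)$ and $\Gamma_j''\sim\Gamma((2j-3)m+2mr(j-1),1)$, and use additivity of the Gamma law to propagate the urn composition through the arrival of $v_j$ --- the dilution factor $1-\psi_j$ applied to all earlier coordinates is exactly the Beta--Gamma identity that keeps the joint law in product form. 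Once this is in place, the base case, the rescaling argument above, and the translation between the ``interval'' and the ``conditionally i.i.d.'' descriptions are routine bookkeeping.
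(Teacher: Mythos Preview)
The paper does not prove this theorem: it is quoted verbatim as \cite[Theorem~2.1]{BBCS2} and used as a black box, so there is no in-paper proof to compare your proposal against. Your outline --- induction on $n$, the rescaling observation that passing from $n-1$ to $n$ multiplies every $\varphi_j$ and $S_j$ with $j\le n-1$ by $1-\psi_n$, and then the identification of the attachment step as a P\'olya urn with immigration whose de~Finetti mixing measure is the product of the stated Beta laws --- is the standard route, and is in fact essentially how \cite{BBCS2} proceeds. Your own diagnosis is accurate: the only non-bookkeeping step is the exact urn calculation showing that the particular choice of $\alpha_n^{(i)}$ for $i\ge 2$ is precisely what makes the $m$ sub-draws of a single vertex exchangeable with the clean Beta--Gamma picture; the rest is routine.
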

We remark that in \cite{BBCS2}, the authors state the theorem for $m\geq 2$. However, their proof also works properly when $m=1$. 

 From now on, we always consider the random multi-graph $G_n$ constructed as in this theorem.

\vspace{0,2 cm}
We now look at the local structure of $G_n$. It was shown in \cite{BBCS2} that $G_n$ is locally tree-like, with some subtle degree distribution that we now recall.  First we define some constants:
$$\chi = \frac{1+2r}{2+2r} \quad \quad \textrm{ and } \quad \quad \psi = \frac{1- \chi}{\chi} = \frac{1}{1+ 2 r}.$$
Note that $1/2 \leq \chi <1$ and $0 < \psi \leq 1$.  Let $F  \sim \Gamma(m+2mr,1)$ and $F' \sim\Gamma(m+2mr+1,1)$.

 We will construct inductively a random rooted tree $(T,o)$ with vertices identified with elements of $\cup_{\ell \geq 1} \mathbb{N}^{\ell}$ (where vertices at generation $\ell$ are elements of $\mathbb{N}^{\ell}$) and a map which associates to each vertex $v$ a position $x_v$ in $[0,1].$ Additionally each vertex (except the root) will be assigned a type, either R or L.  
\begin{itemize}
\item[$\bullet$] The root $o=(0)$ has position $x_o= U_0^{\chi}$, where $U_0 \sim \kU([0,1])$.
\item[$\bullet$] Given $v \in T$ and its position $x_v$, define  
\begin{displaymath}
m_v= \left \{ \begin{array}{ll}
m & \textrm{ if $v$ is the root or of type L},\\
m-1 & \textrm{ if $v$ is  of type R}.
\end{array} \right.
\end{displaymath}
and
\begin{displaymath}
\gamma_{v} \sim \left \{ \begin{array}{ll}
F & \textrm{ if $v$ is the root or of type R},\\
F' & \textrm{ if $v$ is  of type L}.
\end{array} \right.
\end{displaymath}
The children of $v$ are $(v,1)$,\ldots,$(v,m_v)$, $(v,m_v+1)$,\ldots,$(v,m_v+q_{v})$, the first $m_v$'s  are of type $L$ and the remaining ones are of type $R$. Conditionally on $x_v$,  $x_{(v,1)},\ldots,x_{(v,m_v)}$ are i.i.d. uniform  random variable in  $[0,x_v]$, and  $x_{(v,m_v+1)},\ldots,$ $ x_{(v,m_v+q_v)}$ are the points of the Poisson point process on $[x_v,1]$ with intensity
$$\rho_v(x) dx = \gamma_v\frac{\psi x^{\psi-1}}{x_{v}^{\psi}} dx.$$

This procedure defines inductively an infinite rooted tree $(T,o)$, which is called the P\'olya-point graph and  $(x_v)_{v \in T}$ is called the P\'olya-point process. 
\end{itemize}
For any vertex $v$ in a graph $G$ and any integer $R$, we call $B_G(v,R)$ the ball of radius $R$ around $v$ in $G$, which contains all vertices at distance smaller than or equal to $R$ from $v$ and all edges connecting them. 
\begin{theo} \cite[Theorem 2.2]{BBCS2}\label{t2.3}
Assume that the random graph $G_n$ is constructed as in Theorem \ref{tb}. Let $u$ be a  vertex  chosen uniformly at random in $G_n$ and let $R$ be some fixed constant. Then $B_{G_n}(u, R)$ converges weakly to the ball $B_T(o,R)$ in the P\'olya-point graph.  
 \end{theo}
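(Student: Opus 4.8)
Since the assertion is a local weak convergence of finite rooted multigraphs, it suffices to show that $\pn\big(B_{G_n}(u,R)\cong H\big)\to\pp\big(B_T(o,R)\cong H\big)$ for every finite rooted tree $H$, and that $\pn\big(B_{G_n}(u,R)\text{ is not a tree}\big)\to 0$; note that $B_T(o,R)$ is a.s.\ a finite rooted tree, as follows from the construction of $(T,o)$. The plan is to realise $G_n$ through the P\'olya-urn representation of Theorem \ref{tb} and to explore $B_{G_n}(u,R)$ by breadth-first search, matching the exploration, after rescaling, with the recursive construction of $(T,o)$. For positions: $u=v_K$ with $K$ uniform on $\{1,\dots,n\}$, so $K/n\Rightarrow U_0\sim\kU([0,1])$, matching $x_o=U_0^{\chi}$. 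An elementary computation gives $S_k=\prod_{t=k+1}^{n}(1-\psi_t)$, and since $\E[\psi_t]=\chi/t+\kO(t^{-2})$ a Chernoff/martingale bound on $\log S_k=\sum_{t>k}\log(1-\psi_t)$ shows that, on an event of probability $1-o(1)$, $S_k=(k/n)^{\chi}(1+o(1))$ uniformly over $k\geq\varepsilon n$, for any fixed $\varepsilon>0$. So one assigns to $v_j$ the position $x_{v_j}:=S_j\approx(j/n)^{\chi}$, and the cells $I_j=[S_{j-1},S_j)$ partition $[0,1)$ with $|I_j|=\varphi_j=\psi_jS_j$. The individual weights do not wash out: $\psi_j\sim\beta(m+2mr,b_j)$ with $b_j\to\infty$, so $b_j\psi_j$ converges in law to $\Gamma(m+2mr,1)=F$, and these non-degenerate limits play the role of the weights $\gamma_v$. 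Since the estimate for $S_k$ degrades near index $0$ (where $B_T(o,R)$ may place a vertex of arbitrarily small position), one works with the ball truncated at vertices of index $\leq\varepsilon n$ and lets $\varepsilon\downarrow0$ at the end, the truncated part being negligible since $B_T(o,R)$ is a.s.\ finite, together with a matching tail bound on the discrete side.

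Now run breadth-first search from $v_K$, recording for each explored $v=v_k$ its rescaled index $k/n$ (hence its position $x_v$) and a type: $L$ if the BFS-parent of $v$ is younger than $v$, $R$ if it is older. The neighbours of $v_k$ are of two kinds. First, the \emph{older} neighbours, read off from the edge-slots $U_{k,1},\dots,U_{k,m}$, which conditionally on the $\psi$'s are i.i.d.\ $\kU([0,S_{k-1}])$: in position coordinates $U_{k,i}$ is then uniform on $[0,x_v]$ and the neighbour it hits has position $\approx U_{k,i}$, so these neighbours' positions are i.i.d.\ $\kU([0,x_v])$, exactly as for the $L$-children of $v$ in $T$. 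Second, the \emph{younger} neighbours, i.e.\ the pairs $(l,i)$ with $l>k$ and $U_{l,i}\in I_k$: conditionally on the $\psi$'s these are independent events with small probabilities $\varphi_k/S_{l-1}$, and $\sum_{l>k}\sum_{i=1}^m\varphi_k/S_{l-1}$ converges to $\gamma_v(x_v^{-\psi}-1)$ with $\gamma_v=\lim b_k\psi_k$; a conditional Poisson limit theorem (a triangular array of small independent indicators) then yields that the younger neighbours form, in the limit, a Poisson point process on $[x_v,1]$ with intensity $\propto\gamma_v\,\psi x^{\psi-1}x_v^{-\psi}$, i.e.\ the law of the $R$-offspring of $v$ in $T$. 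It remains to check the type bookkeeping. If $v$ has an older parent (type $R$), one slot of $v_k$ is spent on the edge to the parent, leaving $m-1$ slots for new older neighbours ($m_v=m-1$), while conditioning on ``some slot of $v_k$ lands in $I_{k_{\mathrm{par}}}$'' tilts the law of $\psi_k$ by a factor $\propto(1-\psi_k)^{-1}$, which is irrelevant in the limit, so $\gamma_v\sim F$. If $v$ has a younger parent (type $L$), all $m$ slots of $v_k$ give new older neighbours ($m_v=m$), but we have conditioned on ``some slot of the parent lands in $I_k$'', an event of probability $\propto\varphi_k\propto\psi_k$, which size-biases $\psi_k$; size-biasing $\beta(m+2mr,b_k)$ gives $\beta(m+2mr+1,b_k)$, whose rescaled limit is $\Gamma(m+2mr+1,1)=F'$. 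This recovers both $m_v$ and the law of $\gamma_v$ prescribed in the construction.

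One then iterates over generations $\ell=0,1,\dots,R$ by induction. Since the (truncated) $B_T(o,R)$ is a.s.\ finite, the explored set is tight; revealing the finitely many $\psi_j$ of explored vertices and the finitely many slots $U_{k,i}$ already inspected changes neither the global products $S_k$ nor the conditional laws of the untouched slots by more than $o(1)$; hence, conditionally on the exploration through generation $\ell$, generation $\ell+1$ behaves in the $n\to\infty$ limit as an independent, fresh application of the recursion of $T$ at each generation-$\ell$ leaf. For the no-cycle statement, two distinct exploration steps reach the same vertex, or create a back-edge, only with probability $\kO(1/n)$ — same- or older-generation vertices have well-separated indices of order $n$, and younger neighbours land in disjoint cells $I_k$ — so $\pn\big(B_{G_n}(u,R)\text{ is not a tree}\big)\to 0$. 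Assembling these facts and letting $\varepsilon\downarrow0$ gives $\pn\big(B_{G_n}(u,R)\cong H\big)\to\pp\big(B_T(o,R)\cong H\big)$ for every finite rooted tree $H$, which is the assertion.

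The main obstacle is this inductive, ``no-feedback'' control: one must make rigorous that the finite but $n$-dependent amount of randomness exposed during the exploration is asymptotically immaterial for the still-hidden randomness, uniformly over the random (but tight) shape of the explored ball, while keeping $S_k=(k/n)^{\chi}(1+o(1))$ valid on the visited range of indices — which is precisely where the truncation near index $0$ and the matching discrete tail bound enter. The conditional Poisson approximation for the younger neighbours and the size-biasing accounting behind $F$ versus $F'$ are the remaining delicate points, but become routine once the conditional-independence picture is in place.
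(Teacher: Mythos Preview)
The paper does not give its own proof of this statement: Theorem~\ref{t2.3} is quoted verbatim from \cite[Theorem~2.2]{BBCS2} and used as a black box, so there is no in-paper argument to compare against. Your proposal is in fact a condensed outline of the proof in \cite{BBCS2}: the P\'olya-urn representation, the asymptotics $S_k\approx(k/n)^{\chi}$, the BFS exploration with positions, the identification of older neighbours as i.i.d.\ uniforms on $[0,x_v]$, the conditional Poisson limit for younger neighbours with intensity $\gamma_v\psi x^{\psi-1}x_v^{-\psi}$, and the size-biasing that distinguishes $F$ from $F'$ according to type. All of this is the correct skeleton, and the bookkeeping $m_v\in\{m,m-1\}$ is right.

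That said, as written this is a sketch rather than a proof. The point you yourself flag at the end --- the ``no-feedback'' inductive control, i.e.\ that revealing finitely many $\psi_j$'s and slot variables during the exploration leaves the remaining conditional laws asymptotically unchanged, uniformly over the random shape of the explored ball and down to indices of order $\varepsilon n$ --- is exactly where \cite{BBCS2} spends most of its effort, and you have not actually carried it out. Likewise the claim that collisions and back-edges occur with probability $\kO(1/n)$ needs a genuine second-moment or union-bound argument over the (random, growing) set of explored cells, not just the observation that indices are ``well-separated''. If the intent is simply to invoke the result, citing \cite{BBCS2} as the paper does is appropriate; if the intent is to supply an independent proof, the inductive coupling step needs to be written out in full.
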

Note that in \cite{BBCS2} the authors prove this theorem for $m \geq 2$. For $m=1$, recently, in \cite{BMR} the authors show that the local limit of preferential attachment graph is always the P\'olya-point graph regardless the initial (seed) graph (in our case, the initial graph contains two vertices $v_1, v_2$ and one edge connecting them). 

\vspace{0,2 cm}
Now we introduce some notation. We call $\pn$ a probability measure on a space in which the random graph $G_n$ is defined together with the contact process. Since we will fix $\lambda$, we  omit it in the notation. We also call $\mathbb{P}$ a probability measure on a space in which the P\'olya-point graph as well as the contact process are defined.

\vspace{0,2 cm}
 We denote the indicator function of a set $A$ by ${\bf 1}(A)$.  For any  vertices $v $ and $w$ we write $v \sim w$ if there is an edge between them (in which case we say that they are neighbors or connected),  and $v \not \sim w$ otherwise. We denote by $|G|$ the order of  $G$.
 
A graph in which all vertices have degree one, except one which is connected to all the others is called a {\bf star graph}. The only vertex with degree larger than one is called the center of the star graph, or central vertex. 

\vspace{0,2 cm}
If $f $ and $g$ are two real functions, we write $f= \mathcal{O}(g)$ if there exists a constant $C>0,$ such that $f(x) \leq C g(x)$ for all $x ;$ $f \gtrsim g$ (or equivalently $g \lesssim f$) if $g=\kO(f)$; $f \asymp g $ if $f= \mathcal{O}(g)$ and $g= \mathcal{O}(f);$  $f=o(g)$ if $f(x)/g(x) \rightarrow 0$ as $x \rightarrow \infty.$   Finally  for a sequence of r.v.s $(X_n)$ and a function $f : \mathbb{N} \rightarrow (0, \infty)$, we say that $X_n \asymp f(n)$ holds w.h.p. if there exist positive constants $c$ and $C,$ such that $\pn(c f(n) \leq X_n \leq C f(n)) \rightarrow 1$.
  
\subsection{Preliminary results on the random graph.}
We first recall a version of the Azuma-Hoeffding inequality for martingales which we will use throughout this paper (see for instance \cite{CL}). 
\begin{lem}\label{l1}
Let $(X_i)_{i\geq 0}$ be a martingale satisfying  $|X_i -X_{i-1}| \leq 1$ for all $i \geq 1$. Then for any $n$ and $t>0$, we have
$$\mathbb{P} \left( |X_n-X_0| \geq t \right) \leq 2\exp(-t^2/2n).$$
\end{lem}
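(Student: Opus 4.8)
\emph{Proof proposal.} The plan is to run the standard exponential-moment (Chernoff) argument for bounded martingale differences. First I would reduce to the case $X_0=0$ by replacing $X_i$ with $X_i-X_0$, which is again a martingale with the same increment bound, so it suffices to bound $\mathbb{P}(|X_n|\geq t)$. Write $D_i=X_i-X_{i-1}$ for the martingale increments and $\kF_i$ for the underlying filtration; the hypotheses give $|D_i|\leq 1$ and $\E[D_i\mid \kF_{i-1}]=0$ for all $i\geq 1$.

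The key step is a one-step conditional exponential moment bound. Fix $\theta\in\R$. Since $x\mapsto e^{\theta x}$ is convex and any $x\in[-1,1]$ is the convex combination $x=\tfrac{1-x}{2}(-1)+\tfrac{1+x}{2}(1)$, we have the pointwise bound $e^{\theta D_i}\leq \tfrac{1-D_i}{2}e^{-\theta}+\tfrac{1+D_i}{2}e^{\theta}$. Taking conditional expectation given $\kF_{i-1}$ and using $\E[D_i\mid\kF_{i-1}]=0$ yields
$$\E\big[e^{\theta D_i}\mid \kF_{i-1}\big]\leq \tfrac12\big(e^{-\theta}+e^{\theta}\big)=\cosh\theta\leq e^{\theta^2/2},$$
where the last inequality follows by comparing Taylor series term by term, using $(2k)!\geq 2^k\,k!$. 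I would then peel off the increments one at a time: conditioning on $\kF_{n-1}$ and using that $X_{n-1}$ is $\kF_{n-1}$-measurable,
$$\E\big[e^{\theta X_n}\big]=\E\Big[e^{\theta X_{n-1}}\,\E\big[e^{\theta D_n}\mid \kF_{n-1}\big]\Big]\leq e^{\theta^2/2}\,\E\big[e^{\theta X_{n-1}}\big],$$
and iterating down to $X_0=0$ gives $\E[e^{\theta X_n}]\leq e^{n\theta^2/2}$.

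Finally I would apply Markov's inequality: for $\theta>0$, $\mathbb{P}(X_n\geq t)\leq e^{-\theta t}\E[e^{\theta X_n}]\leq e^{-\theta t+n\theta^2/2}$, and optimizing over $\theta$ by taking $\theta=t/n$ gives $\mathbb{P}(X_n\geq t)\leq e^{-t^2/(2n)}$. Running the identical argument for the martingale $(-X_i)_{i\geq 0}$, whose increments are also bounded by $1$, gives $\mathbb{P}(X_n\leq -t)\leq e^{-t^2/(2n)}$, and a union bound over these two events produces $\mathbb{P}(|X_n-X_0|\geq t)\leq 2e^{-t^2/(2n)}$, which is the claim.

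This is a classical estimate and there is no substantive obstacle; the only point that deserves a moment's care is the elementary Hoeffding-type inequality $\E[e^{\theta D_i}\mid\kF_{i-1}]\leq e^{\theta^2/2}$ for a mean-zero increment supported in $[-1,1]$, which, together with the optimization over $\theta$, is entirely standard and could alternatively be quoted directly from \cite{CL} if a self-contained derivation is not wanted.
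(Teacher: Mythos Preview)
Your argument is correct and is the standard Chernoff-bound derivation of the Azuma--Hoeffding inequality; there is no gap. Note, however, that the paper does not give its own proof of this lemma at all: it simply states the inequality and refers the reader to \cite{CL}, so there is nothing to compare your approach against beyond the fact that \cite{CL} contains essentially the same exponential-moment proof you wrote.
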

\noindent From this inequality we can deduce a  large deviations result.  Let $(X_i)_{i\geq 1}$ be a sequence of independent Bernoulli random variables. Assume that $0< 2p \leq \en (X_i) \leq Mp $ for all $i$.  Then there exists $c = c(M)>0,$ such that for all $n$
\begin{align} \label{ldv}
\mathbb{P} \left( np \leq \sum \limits_{i=1}^n X_i \leq 2Mnp \right) \geq 1 - \exp(- c np).
\end{align} 
\noindent Now we present some estimates on the sequences $(\varphi_i)$, $(\psi_j)$ and $(S_k)$.
\begin{lem}\label{l2}
Let $(\varphi_i)_i$, $(\psi_j)_j$ and $(S_k)_k$ be sequences of random variables as in Theorem $\ref{tb}.$ Then there exist positive constants $\mu$ and $\theta_0$, such that for all $\theta \leq \theta_0$, the following assertions hold.
\begin{itemize}
\item[(i)] $ \en (\psi_j) = \frac{\chi}{j} +\kO(\frac{1}{j^2}), \quad \en(\psi_j^2) \asymp \frac{1}{j^2}.$ \\
\item[(ii)] For any $\varepsilon >0,$ there exists $K=K(\varepsilon)< \infty,$ such that 
$$\pn(\mathcal{E}_{\varepsilon}) \geq 1- \varepsilon,$$
where 
$$\mathcal{E}_{\varepsilon}= \Big \{ \Big| \frac{S_j}{S_k}  -  \left(\frac{j}{k}\right)^{\chi} \Big| \leq \varepsilon (j/k)^{\chi} \quad \forall \, K(\varepsilon) \leq j \leq  k \leq n\Big \}.$$ 
\item[(iii)] As $n$ tends to infinity, 
\[\pp(i \psi_i \leq 2 \log n \, \, \forall \, i=1, \ldots, n)=1-o(1).\]
\item[(iv)] $\pn \left(\mu/j \geq \psi_j \geq \theta/j \right) \geq 2 \theta.$ \\
\item[(v)] $\en (\varphi_j 1( \psi_j \geq \theta/j)) \geq \theta \en(\varphi_j)$. 
\end{itemize}
\end{lem}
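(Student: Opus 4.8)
The plan is to dispatch (i), (iii), (v) by direct Beta--moment computations, (iv) by a crude two--sided tail bound, and (ii) --- the only delicate point --- via a multiplicative martingale together with Doob's $L^2$--maximal inequality. Write $a=m(1+2r)$ and $b_j=(2j-3)m+2mr(j-1)$, so that $\psi_j\sim\beta(a,b_j)$ for $j\ge2$ and, conveniently, $a+b_j=2m\big((1+r)j-1\big)$. From the Beta mean formula one gets the \emph{exact} value $\en(\psi_j)=a/(a+b_j)=\chi/(j-c)$ with $c:=1/(1+r)\in(0,1]$, hence $\en(\psi_j)=\chi/j+\kO(1/j^2)$, and $\en(\psi_j^2)=a(a+1)/\big((a+b_j)(a+b_j+1)\big)\asymp1/j^2$ because numerator and denominator are a positive constant and a quantity $\asymp j^2$; that settles (i). For (iii), using $c\ge0$ and the concavity of $x\mapsto x^{\chi}$, one has $1-\en(\psi_t)=1-\chi/(t-c)\le1-\chi/t\le(1-1/t)^{\chi}$; multiplying over $t=i+1,\dots,j$ (the $\psi_t$ being independent) and telescoping $\prod_{t}(t-1)/t=i/j$ gives $\en(S_i^{(j)})=\prod_{t=i+1}^{j}\en(1-\psi_t)\le(i/j)^{\chi}$ (the case $i=0$ is trivial, as $\psi_1=1$). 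For (v), independence of $\psi_j$ from $\prod_{t>j}(1-\psi_t)$ factors both $\en(\varphi_j\mathbf{1}(\psi_j\ge\theta/j))$ and $\en(\varphi_j)$, so it suffices to prove $\en(\psi_j\mathbf{1}(\psi_j\ge\theta/j))\ge\theta\,\en(\psi_j)$; from the pointwise bound $\psi_j\mathbf{1}(\psi_j\ge\theta/j)\ge\psi_j-\theta/j$ together with $\chi/j\le\en(\psi_j)=\chi/(j-c)\le\chi/(j-1)$, an elementary manipulation yields this for every $j\ge2$ once $\theta\le\theta_0:=\chi/(1+2\chi)$, the case $j=1$ being immediate since $\psi_1=1$.

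For (iv), fix $\mu:=4\chi\ge2$. Markov's inequality gives $\pn(\psi_j>\mu/j)\le(j/\mu)\,\en(\psi_j)=\chi j/\big(\mu(j-c)\big)\le2\chi/\mu\le1/2$ for $j\ge2$, while $\pn(\psi_1>\mu)=0$. For the lower tail, a crude Beta--density bound gives $\pn(\psi_j<\theta/j)\le B(a,b_j)^{-1}\int_0^{\theta/j}x^{a-1}\,dx=\kO(\theta^{a})$ uniformly for $j$ large (using $B(a,b_j)^{-1}\asymp j^{a}$), and for each of the finitely many remaining $j\ge2$ one has $\pn(\psi_j<\theta/j)\to0$ as $\theta\to0$; hence $\theta_0$ may be shrunk so that $\pn(\psi_j<\theta/j)\le1/4$ for all $j$ and all $\theta\le\theta_0$. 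Combining, $\pn(\theta/j\le\psi_j\le\mu/j)\ge1-\tfrac12-\tfrac14=\tfrac14\ge2\theta$ as soon as $\theta_0\le\tfrac18$.

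The core is (ii). First I would establish the telescoping identity $S_k=\prod_{t=k+1}^{n}(1-\psi_t)$ for every $1\le k\le n$: indeed $\sum_{j=1}^{k}\psi_j\prod_{t=j+1}^{k}(1-\psi_t)=1-\prod_{t=1}^{k}(1-\psi_t)=1$ because $\psi_1=1$, and multiplying through by $\prod_{t=k+1}^{n}(1-\psi_t)$ gives the claim. Hence $\en(S_k)=\prod_{t=k+1}^{n}\big(1-\chi/(t-c)\big)$, and taking logarithms with $\log(1-x)=-x+\kO(x^2)$ (legitimate since $\chi/(t-c)\le\chi<1$) yields $\en(S_k)=(k/n)^{\chi}\big(1+\kO(1/k)\big)$, uniformly in $n$. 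Now set $Y_t:=(1-\psi_t)/\en(1-\psi_t)$: these are independent, nonnegative, of mean $1$, and $\en(1-\psi_t)\ge\tfrac12$ for $t\ge2$, so $\en(Y_t^2)=1+\va(\psi_t)/\en(1-\psi_t)^2\le1+\kO(1/t^2)$. Since $W_k:=S_k/\en(S_k)=\prod_{t=k+1}^{n}Y_t$ obeys $W_{k-1}=Y_kW_k$ with $Y_k$ independent of $W_k$, the process $(W_k)_{k=n,n-1,\dots,1}$ is a martingale started from $W_n=1$, and $\en(W_K^2)=\prod_{t=K+1}^{n}\en(Y_t^2)\le\exp(\kO(1/K))$, so $\va(W_K)=\kO(1/K)$ uniformly in $n$. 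Doob's $L^2$--maximal inequality applied to $k\mapsto W_k$ then gives $\en\big(\max_{K\le k\le n}(W_k-1)^2\big)\le4\,\va(W_K)=\kO(1/K)$, and Markov's inequality shows that $|W_k-1|\le\delta$ holds for all $K\le k\le n$ simultaneously outside an event of probability $\kO\big(1/(K\delta^2)\big)$. Plugging this into $S_k=W_k\,(k/n)^{\chi}\big(1+\kO(1/k)\big)$ and choosing first $\delta=\varepsilon/4$ and then $K=K(\varepsilon)$ large enough that the $\kO(1/k)$ error is $\le\varepsilon/2$ for $k\ge K$ and the failure probability is $\le\varepsilon$, proves $\pn(\kE_\varepsilon)\ge1-\varepsilon$.

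The main obstacle is precisely this last step. A direct Azuma--Hoeffding bound on $\log S_k=\sum_{t>k}\log(1-\psi_t)$ would only control fluctuations at scale $\sqrt{n-k}$, which is hopelessly weak for a uniform--in--$k$ statement; what makes it work is to exploit the variance information $\va(\psi_t)=\kO(1/t^2)$ --- equivalently $\va(W_K)=\kO(1/K)$ --- through a second--moment (Doob) maximal inequality rather than a Hoeffding--type one. One also has to verify that $\en(W_k^2)$ stays bounded as $n\to\infty$, which is exactly the summability $\sum_t\va(\psi_t)<\infty$ combined with $\en(1-\psi_t)$ being bounded away from $0$ for $t\ge2$.
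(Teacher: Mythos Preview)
Your argument is correct. For parts (i), (iii) and (v) your computations coincide with the paper's almost verbatim (the paper also factors $\varphi_j$ in (v) and reduces to a lower bound on $\en(\psi_j\mathbf 1(\psi_j\ge\theta/j))$, though it leaves the last step implicit where you make it explicit via $\psi_j\mathbf 1(\psi_j\ge\theta/j)\ge\psi_j-\theta/j$). For (iv) the paper takes a slightly different route: it uses the Beta identity $\va(\psi)/\en(\psi)^2\le 1/a$ together with Chebyshev's inequality to get a two–sided concentration $\pn(\psi_j\in(\delta\en\psi_j,(2-\delta)\en\psi_j))\ge 1-1/((1-\delta)^2 a)$, whereas you bound each tail separately by Markov and a direct Beta–density estimate; both approaches are elementary and give the claim immediately.

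The only substantive divergence is (ii). The paper does not prove it at all but simply quotes \cite[Lemma~3.1]{BBCS2}. Your self–contained argument—observing the product formula $S_k=\prod_{t>k}(1-\psi_t)$, computing $\en(S_k)=(k/n)^\chi(1+\kO(1/k))$ from the exact mean $\en(\psi_t)=\chi/(t-c)$, and then controlling the normalized product martingale $W_k=S_k/\en(S_k)$ via Doob's $L^2$–maximal inequality together with $\sum_t\va(\psi_t)<\infty$—is exactly the right mechanism and yields the uniform statement needed. Your remark that an Azuma–Hoeffding bound on $\log S_k$ would be too weak is to the point: it is the summable variances, exploited through a second–moment maximal inequality, that give the required uniformity in $k$.
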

\begin{proof}
Let us start with Part (i). Observe that if $\psi \sim \beta(a,b)$, then
$$\mathbb{E}(\psi)= \frac{a}{a+b} \quad \textrm{ and } \quad \mathbb{E}(\psi^2)= \frac{a(a+1)}{(a+b)(a+b+1)}. $$
Hence the result follows from the fact that $\psi_j \sim \beta( m+2mr, (2j-3)m+2mr(j-1)).$ 

Part (ii) is  a direct consequence of Lemma 3.1 in \cite{BBCS2}.  We now prove (iii). Since $\psi_i \sim \beta(a,b_i)$ with $a= m +2mr$ and $b_i=(2m+2mr)i -(3m +2mr)$, we have for $i\geq 2 \log n$
\begin{eqnarray*}
\pp \left(\psi_i > \frac{2 \log n}{i}\right) &=& \frac{1}{B(a,b_i)} \int_{2 \log n/i}^1 x^{a-1} (1-x)^{b_i -1} dx \\
& \lesssim & b_i^{a} \left(1-\frac{2 \log n}{i}\right)^{b_i} \\
& \lesssim & n^{-2}.
\end{eqnarray*}
Here, we have used that $B(a,b) \asymp \kO(b^{-a})$ when $a$ is fixed. On the other hand,  when $i< 2 \log n$, this probability is zero. Therefore 
\[\pp(i \psi_i \leq 2 \log n \, \, \forall \, i=1, \ldots, n)=1-o(1).\]
For Part (iv), Chebyshev's inequality gives that for any $\delta \in (0,1)$
$$\pn \left(|\psi_j - \en(\psi_j)|> (1- \delta) \en(\psi_j) \right) \leq \frac{\mathbb{V}\textrm{ar}(\psi_j)}{(1-\delta)^2 \en(\psi_j)^2}.$$
Moreover, if $\psi \sim \beta(a,b),$ then 
$$ \frac{\va(\psi)}{ \mathbb{E}(\psi)^2}= \frac{\mathbb{E}(\psi^2)}{\mathbb{E}(\psi)^2}-1 = \frac{(a+1)(a+b)}{a(a+b+1)}-1 = \frac{b}{a(a+b+1)} \leq \frac{1}{a}.$$
Therefore for any $j$
$$\pn(\psi_j \in (\delta \en(\psi_j), (2-\delta)\en(\psi_j))) \geq 1- \frac{1}{(1-\delta)^2(m+2mr)}.$$ 
Hence thanks to (i) we can choose positive constants $\mu$ and $\theta$, such that for all $j$
\begin{equation} \label{e3}
\pn \left(\psi_j \in \left( \frac{\theta }{j}, \frac{\mu}{j}\right) \right) \geq 2\theta.
\end{equation}
For (v), we notice that by (i), $\en(\psi_j) \asymp 1/j$. Hence
 \begin{align} \label{rvm}
 \en(\psi_j 1(\psi_j \geq \theta /j)) \geq \en(\psi_j) - (\theta /j) \geq c\en(\psi_j),
\end{align}  
for some constants $c$ and $\theta$ independent of $j$. Moreover, using the fact that these random variables $(\psi_j)_{j\geq 0}$ are independent, we obtain 
\begin{align*}
\en(\varphi_j 1(\psi_j \geq \theta /j)) & = \en \left(\psi_j 1( \psi_j \geq \theta/j)\prod_{t=j+1}^n (1- \psi_t)\right) \\
& =  \en (\psi_j 1( \psi_j \geq \theta/j)) \en \left(\prod_{t=j+1}^n (1- \psi_t)\right) \\
& \geq  c \en(\psi_j)\en \left(\prod_{t=j+1}^n (1- \psi_t)\right) \\
& =  c \en(\varphi_j). 
\end{align*}
Here for the third line, we have used \eqref{rvm}.
\end{proof}
The preferential attachment graph is known as a prototype of small world networks whose  diameter and typical distance (the distance between two randomly chosen vertices) are of logarithmic order. In fact, these quantities  in the independent model  were well-studied, see for instance \cite{DVH} or \cite{V}.  In the following two lemmas, we  prove  similar estimates for the sequential model. These estimates are in fact weaker but  sufficient for our purpose.
\begin{lem} \label{lmdi}
Let $d(G_n)$ be the diameter of the random graph $G_n,$ i.e. the maximal distance between any pair of vertices in $G_n$. Then there exists a positive constant $b_1$, such that
$$\pn( d(G_n) \leq b_1 \log n) = 1 - o(1).$$
\end{lem}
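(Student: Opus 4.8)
The plan is to exhibit a ``core'' $C_n=\{v_1,\dots,v_{K(n)}\}$ with $K(n)=\lceil\log n\rceil$, to note that $C_n$ is connected with diameter $\mathcal O(\log n)$, and to show that w.h.p.\ every vertex $v_k$ with $k>K(n)$ is joined to $C_n$ by a path of length $\mathcal O(\log n)$; the triangle inequality then gives $d(G_n)=\mathcal O(\log n)$ w.h.p. I will work throughout with the P\'olya urn representation of Theorem~\ref{tb}. The assertion about $C_n$ is deterministic: in that representation all $m$ variables $U_{k,i}$ fall in $[0,S_{k-1})=\bigcup_{j<k}I_j$, so every $v_k$ with $k\ge 2$ has a neighbour $v_j$ with $j<k$; iterating, from any $v_k$ one reaches $v_1$ by a path of length $\le k-1$ staying inside $\{v_1,\dots,v_k\}$, whence $G_n$ is connected and the graph induced on $C_n$ has diameter $\le 2(K(n)-1)=\mathcal O(\log n)$.

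Next I would set up a high-probability event for the P\'olya weights, working conditionally on $\psi_2,\dots,\psi_n$. For $j<k$, Theorem~\ref{tb} gives $S_j/S_k=\prod_{t=j+1}^{k}(1-\psi_t)$. A short computation with Beta moments (using for instance $-\log(1-x)\le x/(1-x)$ and the closed forms of $\en[(1-\psi_t)^{-1}]$, $\en[(1-\psi_t)^{-2}]$) shows $\en[-\log(1-\psi_t)]=\mathcal O(1/t)$ and $\va(-\log(1-\psi_t))=\mathcal O(1/t^2)$. Hence, for $k\in[2^i,2^{i+1})$, the quantity $-\log\!\big(S_{\lfloor k/2\rfloor}/S_{k-1}\big)=\sum_{\lfloor k/2\rfloor<t<k}(-\log(1-\psi_t))$ is bounded above by the block sum $B_i:=\sum_{2^{i-1}<t\le 2^{i+1}}(-\log(1-\psi_t))$, which is a sum of independent terms of mean $\mathcal O(1)$ and variance $\mathcal O(2^{-i})$. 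Chebyshev's inequality and a union bound over the $\mathcal O(\log n)$ blocks with $2^i\ge\sqrt{\log n}$ then produce constants $c_0\in(0,1)$ and $C_0<\infty$ for which the event
\[
\mathcal G\ :=\ \Big\{\, S_{\lfloor k/2\rfloor}/S_{k-1}\ \ge\ c_0\quad\text{for all }K(n)<k\le n\,\Big\}
\]
satisfies $\pn(\mathcal G^c)=\mathcal O\big((\log n)^{-1/2}\big)=o(1)$; the finitely many coarser dyadic scales are harmless since they concern only vertices of index $\mathcal O(\sqrt{\log n})$, which already lie in $C_n$.

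Finally, on $\mathcal G$, for each fixed $v_k$ with $k>K(n)$ I would run a greedy walk toward the core: $k_0=k$ and, while $k_t>K(n)$, let $k_{t+1}$ be the index of the interval $I_j$ that contains $\min_{1\le i\le m}U_{k_t,i}$, so that $v_{k_{t+1}}\sim v_{k_t}$ and $k_{t+1}<k_t$; eventually $k_t\le K(n)$, i.e.\ the walk hits $C_n$. Since the $U_{k_t,i}\sim\kU([0,S_{k_t-1}])$ are independent of the past given $\psi_2,\dots,\psi_n$, at every step with $k_t>K(n)$,
\[
\pn\big(k_{t+1}\le\lfloor k_t/2\rfloor\mid\text{past}\big)\ =\ 1-\big(1-S_{\lfloor k_t/2\rfloor}/S_{k_t-1}\big)^m\ \ge\ 1-(1-c_0)^m\ =:\ p_0>0 .
\]
A ``halving step'' divides $k_t$ by at least $2$, so fewer than $\lceil\log_2 n\rceil$ of them occur before the walk reaches $C_n$; hence the walk length $N_k$ obeys $\pn(N_k>s\mid\psi_2,\dots,\psi_n)\le\pn\big(\mathrm{Bin}(s,p_0)<\lceil\log_2 n\rceil\big)$, and for $s=C_2\log n$ with $C_2$ large in terms of $p_0$ a Chernoff bound makes this at most $n^{-2}$ on $\mathcal G$. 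A union bound over the at most $n$ starting vertices, together with $\pn(\mathcal G^c)=o(1)$, shows that w.h.p.\ every $v_k$ is within distance $C_2\log n$ of $C_n$; combined with the deterministic diameter bound for $C_n$ this yields $d(G_n)\le b_1\log n$ w.h.p.\ for a suitable $b_1$.

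The step I expect to be the real obstacle is the construction of $\mathcal G$: Lemma~\ref{l2}(ii) only pins down the $S_k$ up to a fixed error probability, whereas here a $1-o(1)$ statement is needed. The remedy above is to control the scale-invariant ratios $S_{\lfloor k/2\rfloor}/S_{k-1}$ --- equivalently, products of $1-\psi_t$ over dyadic blocks --- whose fluctuations decay geometrically with the scale, so the failure probability contributed by scales $\gtrsim\sqrt{\log n}$ is $o(1)$, while the boundedly many coarser scales affect only a polylogarithmic core treated by the trivial deterministic argument. The rest is routine: Beta-moment estimates and a standard Chernoff/Azuma bound as in Lemma~\ref{l1} and \eqref{ldv}.
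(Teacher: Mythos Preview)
Your argument is correct and follows the same skeleton as the paper's proof: both run a greedy ``walk to smaller index'' from each $v_k$, show that at every step the index halves with probability bounded below by a fixed $p>0$, use a Chernoff/Azuma bound to reach a logarithmic-size core in $\mathcal O(\log n)$ steps with probability $1-\mathcal O(n^{-2})$, take a union bound over all $k$, and finish with the deterministic diameter bound on the core.

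The one real difference is exactly the point you flagged. You build a new good-environment event $\mathcal G=\{S_{\lfloor k/2\rfloor}/S_{k-1}\ge c_0\ \forall\,k>K(n)\}$ with $\pn(\mathcal G^c)=o(1)$ via a dyadic-block Chebyshev argument on $\sum_t(-\log(1-\psi_t))$. The paper instead simply reuses $\mathcal E_\varepsilon$ from Lemma~\ref{l2}(ii): on $\mathcal E_\varepsilon$ one has $S_{\lfloor k/2\rfloor}/S_{k-1}\ge\frac{1-\varepsilon}{(1+\varepsilon)2^{\chi}}\ge 2^{-(\chi+2)}=:p$ for all $k\ge 2K(\varepsilon)$, and crucially this lower bound $p$ (hence the final constant $b_1$) is independent of $\varepsilon$. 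Thus the paper gets, for every $\varepsilon>0$, $\pn(d(G_n)\le b_1\log n)\ge 1-2\varepsilon$ for $n$ large, and sends $\varepsilon\to 0$ at the end. This is shorter than your block construction and avoids the Beta-moment computations; your route, on the other hand, produces an explicit $1-o(1)$ environment event in one shot, which is conceptually cleaner and would be reusable if one needed a quantitative rate. Two further minor differences: the paper uses a single neighbour (any $j<k$ with $v_j\sim v_k$) rather than your $\min_i U_{k,i}$, and it applies Azuma--Hoeffding (Lemma~\ref{l1}) to the martingale $\sum_j(X_j-\en(X_j\mid\mathcal F_{j-1}))$ instead of a binomial Chernoff bound; these are interchangeable here.
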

\begin{proof}
 Let $\varepsilon \in (0,1/2)$ be given, and recall the definitions of $K(\varepsilon)$ and  $\kE_{\varepsilon}$ given in Lemma \ref{l2} (ii). We first bound $d(v_1,v_n)$. Define a decreasing random sequence $(n_i)_{i \geq 0}$ as follows $n_0=n$  and for $i \geq 1$,  $n_{i+1}$ is arbitrarily chosen from indices such  that $v_{n_{i+1}}$ receives an edge emanating from $v_{n_i}$.   Define
$$X_i=1(\{n_{i} \leq n_{i-1}/2\}) \textrm{ and } \kF_i = \sigma(n_j: j \leq i) \vee \sigma((\varphi_t)).$$
By the construction of the graph in Theorem \ref{tb}, we have
\[\en(X_{i+1}\mid \kF_i) = \pn(n_{i+1} \leq n_i/2 \mid  \kF_i) = \frac{S_{[n_i/2]}}{S_{n_i-1}}, \]
with $(S_i)$ as in Theorem \ref{tb}. We now define  
$$ \sigma_n = \inf \{ i: n_{i+1} \leq \log n \}.$$
If $i \leq \sigma_n$, then $n_i > \log n > 2K(\varepsilon)$ for $n$ large enough. Therefore by Lemma \ref{l2} (ii), we have on $\kE_{\varepsilon}$ for $i \leq \sigma_n$
\begin{align*}
 \frac{S_{[n_i/2]}}{S_{n_i-1}} \geq (1- \varepsilon)\left(\frac{[n_i/2]}{ n_i-1} \right)^{\chi} \geq \frac{1}{2^{\chi +1}} =:p.
\end{align*}
In other words,  we have
\begin{align} \label{sao} 
\en (X_{i+1} \mid \kF_{i}, \kE_{\varepsilon}) 1(i \leq \sigma_n)\geq p 1(i \leq \sigma_n).
\end{align}
Let $$Y_k = \sum \limits_{i=1}^{k} \left(  X_i - \en (X_i \mid \kF_{i-1},\kE_{\varepsilon}) \right).$$
Then $(Y_k)$ is a martingale with respect to the filtration $(\kF_k)$ and $|Y_k-Y_{k-1}| \leq 1$. By using Lemma \ref{l1} we get that 
\begin{align} \label{byk}
\pn \left( Y_k \leq -kp/2 \mid (\varphi_t)\right) \leq  2\exp(-kp^2/8).
\end{align}
Note that  there is a slight abuse of notation: the left-hand side of \eqref{byk} should be a conditional expectation of an indicator variable, but for simplicity we just write it as a conditional probability. Now,  it follows from \eqref{sao} and \eqref{byk}  that 
\begin{align} \label{azh}
\pn \left( \sum \limits_{i=1}^{k}  X_i  \leq kp/2 \mid (\varphi_t), \kE_{\varepsilon},  \sigma_n \geq k\right) \leq  2\exp(-kp^2/8).
\end{align}
Moreover, if  $ \sum _1^k X_i > (\log_2 n - \log_2 \log n) $, then $n_k \leq \log n$ or equivalently $\sigma_n \leq k$. Hence for all $C=C(p)$ large enough
\begin{align*}
&\pn \left( \sigma_n \geq C \log n, \sum \limits_{i=1}^{C \log n}  X_i  > (C\log n)p/2 \mid (\varphi_t), \kE_{\varepsilon} \right) \\
\leq &\pn \left( \sigma_n \geq C \log n, \sum \limits_{i=1}^{C \log n}  X_i  > \log_2 n - \log_2 \log n \mid (\varphi_t), \kE_{\varepsilon} \right) = 0
\end{align*}
Therefore, we  have
\begin{align} \label{lbs}
\pn (\sigma_n \geq C \log n \mid (\varphi_t), \kE_{\varepsilon}) & =   \mathbb{P}_n \left( \sigma_n \geq C \log n, \sum \limits_{i=1}^{C \log n}  X_i  \leq (C\log n)p/2 \mid (\varphi_t), \kE_{\varepsilon} \right) \notag \\
 &\leq 2 \exp (-(C \log n) p^2/8))= \mathcal{O}(n^{-2}).
\end{align}
Here for the last line, we have used \eqref{azh} for  $k= C \log n$, with some $C$ large enough.  On the other hand,
\[\{\sigma_n \leq C \log n\} \subset\{\exists \, k \leq \log n: d(v_n,v_k) \leq C \log n\},\]
and  $d(v_1,v_k) \leq \log n$ for all $k \leq \log n$. Therefore,
\[\{\sigma_n \leq C \log n\} \subset\{ d(v_n,v_1) \leq (C+1) \log n\}.\]
Hence by \eqref{lbs}, we obtain that 
\begin{equation} \label{hta}
\pn(d(v_1,v_n) \geq (C+1) \log n \mid (\varphi_t), \kE_{\varepsilon}) =  \mathcal{O}(n^{-2}).
\end{equation}

\noindent Let $d_{G_k}(v_i,v_j)$ be the distance between $v_i$ and $v_j$ in  $G_k$ for $i,j \leq k \leq n$.  Note that 
$$d_{G_k}(v_i,v_j) \geq d(v_i,v_j) = d_{G_n} (v_i,v_j). $$
 Similarly to \eqref{hta}, we  deduce that on $\kE_{\varepsilon}$, for all $i \geq C \log n$,
\begin{align*}
\pn(d(v_1,v_i) \geq (C+1) \log i \mid (\varphi_t)) \leq \mathbb{P}(d_{G_i}(v_1,v_i) \geq (C+1) \log i \mid (\varphi_t)) = \kO(i^{-2}).
\end{align*}
Hence on $\kE_{\varepsilon}$
$$\pn( d(v_1,v_i) \leq (C+1) \log n  \, \,\forall \, i \geq C \log n \mid (\varphi_t)) = 1-o(1).$$
Therefore by taking expectation with respect to $(\varphi_t)$ and using Lemma \ref{l2} (ii), we get
$$\pn (d(G_n) \leq 2(C+1) \log n ) \geq  1-2 \varepsilon,$$
which proves the result by letting $\varepsilon$ tend to $0$.
\end{proof}
Before proving the lower bound on the typical distance, we make a remark which will be used frequently in this paper. It follows from the definition of $G_n$ that for all $i<j$,
$$\pn(v_i \not \sim v_j \mid (\varphi_t))= \left(1 - \frac{\varphi _i}{S_{j-1}} \right)^m.$$
Hence
$$\frac{ \varphi_i}{S_{j-1}} \leq \pn(v_i  \sim v_j \mid (\varphi_t)) \leq \frac{m \varphi_i}{S_{j-1}}.$$
Then by using the following identities
$$S_{j-1} = \sum \limits_{t=1}^{j-1} \varphi_t = \prod \limits_{t=j}^{n}(1- \psi_t) \quad \textrm{ and } \quad \varphi_i = \psi_i \prod \limits_{t=i+1}^{n} (1- \psi_t), $$
we obtain that
\begin{equation} \label{ecp}
\frac{\psi_i S_i}{S_{j-1}} \leq \pn(v_i  \sim v_j \mid (\varphi_t)) \leq m \frac{\psi_i S_i}{S_{j-1}}.
\end{equation} 
\begin{lem} \label{ltyd} Let $w_1$ and $w_2$ be two uniformly chosen vertices from $V_n$. Then there is a positive constant $b_2$, such that w.h.p. 
$$d(w_1,w_2) \geq \frac{b_2 \log n}{\log  \log n}.$$
\end{lem}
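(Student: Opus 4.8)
The plan is to show that with high probability there is no short path between two uniformly chosen vertices, by a first-moment (union bound) argument over all candidate paths of length at most $L := \frac{b_2 \log n}{\log\log n}$. First I would condition on the sequence $(\varphi_t)$ (equivalently on $(\psi_t)$) and work on the high-probability event $\kE_\varepsilon$ from Lemma \ref{l2}(ii), so that $S_k \asymp (k/n)^\chi$ for all $k \geq K(\varepsilon)$; contributions from the $O(1)$ vertices with index below $K(\varepsilon)$ will be handled separately and crudely, since they only inflate distances by an additive constant. The key input is the edge-probability bound \eqref{ecp}: for $i<j$, $\pn(v_i \sim v_j \mid (\varphi_t)) \leq m\psi_i S_i^{(j-1)}$, and combined with Lemma \ref{l2}(iii) (after a further averaging, or by a direct bound on $\psi_i S_i^{(j-1)}$ using $\psi_i \lesssim 1/i$ on a good event) this gives something like $\pn(v_i \sim v_j \mid (\varphi_t)) \lesssim \frac{1}{i}(i/j)^\chi = \frac{i^{\chi-1}}{j^\chi}$ for $i<j$, i.e. the expected number of edges between $v_i$ and $v_j$ is at most $C i^{\chi-1} j^{-\chi}$.

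Next I would estimate the expected number of paths. A path of length $\ell$ from $w_1=v_a$ to $w_2=v_b$ is a sequence of vertices $v_{a}=v_{i_0}, v_{i_1}, \ldots, v_{i_\ell}=v_b$ with consecutive vertices adjacent; using the (conditional) independence of the edge events across disjoint vertex pairs built into the Pólya-urn representation, the probability that a given vertex sequence forms a path is at most the product of the pairwise bounds. So the expected number of length-$\ell$ paths joining a fixed pair, summed over intermediate indices, is at most $\sum_{i_1,\ldots,i_{\ell-1}} \prod_{k=0}^{\ell-1} C\, (i_k \wedge i_{k+1})^{\chi-1}(i_k \vee i_{k+1})^{-\chi}$. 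The goal is to show each sum over a free intermediate index $i_k$ is bounded by a constant $C'$ uniformly (the point being that $\sum_{i} (\min)^{\chi-1}(\max)^{-\chi}$ over $i$ between neighbours converges, since $\chi<1$, the worst case being $i$ small where $\sum_i i^{\chi-1}\cdot(\text{const}) $ is controlled by $\chi>0$, and $i$ large where the $i^{-\chi}$ factor with $\chi\le 1$ and the companion factor tame the tail — essentially each internal vertex contributes a bounded factor). Then the expected number of paths of length $\leq L$ between $v_a$ and $v_b$ is at most roughly $(C')^{L}$, and averaging over the uniform choice of $a,b$ (or just taking $a=1$, as the graph is exchangeable, which costs only an additive $O(\log n)$ in distance by Lemma \ref{lmdi}-type reasoning... actually better to keep $a,b$ random and note each $i_0,i_\ell$ is uniform) the total expected number of pairs $(w_1,w_2)$ at distance $\le L$ is at most $\frac{1}{n}\cdot (C')^{L} \cdot (\text{polynomial factors})$. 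Choosing $b_2$ small enough that $(C')^{L} = n^{o(1)}$, this is $o(1)$, and Markov's inequality finishes the argument on $\kE_\varepsilon$; letting $\varepsilon \to 0$ completes the proof.

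The main obstacle I anticipate is making the per-vertex summation bound $\sum_{i_k} (\text{pairwise factor})\cdot(\text{pairwise factor}) \leq C'$ genuinely uniform in the neighbouring indices $i_{k-1}, i_{k+1}$, including the degenerate regimes where an internal index is very small (comparable to $K(\varepsilon)$) or very close to $a$ or $b$; the exponent bookkeeping with $\chi \in [1/2,1)$ is delicate precisely at $\chi = 1$ (the case $\alpha \to 1$, excluded here), and the factor $\frac{1}{\log\log n}$ in the exponent $L$ is exactly what is needed to absorb the constant $C'$ since $(C')^{L} = \exp(L \log C') = \exp(O(\log n/\log\log n)) = n^{o(1)}$, but I must ensure the polynomial prefactors (from the $n^2$ choices of endpoints and the crude treatment of small-index vertices) do not overwhelm this. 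A secondary technical point is justifying the product bound for path probabilities: edges incident to the same vertex $v_k$ (as a $U_{k,i} \in I_j$ for various $j$) are not independent, but edges between disjoint pairs of distinct vertices are conditionally independent given $(\psi_t)$, and along a simple path all the relevant pairs are distinct, so a careful appeal to the representation in Theorem \ref{tb} handles this; one should also restrict to simple paths (no repeated vertices), which is harmless since any walk contains a simple path between its endpoints of the same or shorter length.
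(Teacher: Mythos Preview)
Your overall plan --- first-moment bound over self-avoiding paths, with the edge estimate coming from \eqref{ecp} --- is exactly the paper's approach. The gap is in the core step: the claim that summing over a free internal index contributes a \emph{uniform constant} $C'$ is false, and the $1/\log\log n$ in the exponent is needed for precisely the opposite reason from the one you give.

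Concretely, with the (correct) bound $\pn(v_i\sim v_j)\le C\,(i\wedge j)^{\chi-1}(i\vee j)^{-\chi}$, fix neighbours $a\le b$ and sum over the intermediate index $s$. In the regime $a<s\le b$ the product of the two edge factors is $a^{\chi-1}b^{-\chi}s^{-1}$, whose sum over $s$ is $a^{\chi-1}b^{-\chi}\log(b/a)$; for $\chi=1/2$ the tails $s\le a$ and $s>b$ each contribute another $\log$-factor. In every case one gets
\[
\sum_{s}\big(\text{edge}(a,s)\big)\big(\text{edge}(s,b)\big)\ \le\ C\,(\log n)\cdot\big(\text{edge}(a,b)\big),
\]
not $C'\cdot(\text{edge}(a,b))$. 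The paper makes this transparent by passing to the cruder symmetric bound $\pn(v_i\sim v_j)\le C/\sqrt{ij}$ (valid since $\chi\ge 1/2$), for which the internal sum is simply $\sum_s 1/s\asymp\log n$. Iterating gives $g_k(i,j)\le (C\log n)^k/\sqrt{ij}$, and after averaging over the two uniform endpoints,
\[
\pn(d(w_1,w_2)\le K)\ \le\ \frac{(C\log n)^{K+1}}{n}.
\]
This is $o(1)$ exactly when $K\log\log n<(1-\delta)\log n$, which forces $K=b_2\log n/\log\log n$; a constant per-vertex factor would have yielded the stronger (and here unproven) bound $K\asymp\log n$.

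Two smaller points. First, conditioning on $\kE_\varepsilon$ does not give you $\psi_i\lesssim 1/i$ pointwise: $\kE_\varepsilon$ controls the partial products $S_k$, not individual $\psi_i$. The paper sidesteps this by working with the \emph{annealed} edge probability, using $\en(\psi_i)\asymp 1/i$ together with $\en(S_i^{(j-1)})\le(i/j)^\chi$ and the independence of $\psi_i$ from $S_i^{(j-1)}$. Second, consecutive edges along a path that share a ``younger'' endpoint (say $v_a-v_c-v_b$ with $c>a,b$) are not conditionally independent given $(\psi_t)$, since both depend on $(U_{c,i})_i$; they are however negatively correlated, so the product upper bound you need still holds --- but this should be said rather than assumed.
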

\begin{proof}  Fix  $\varepsilon \in (0,1/2)$ a positive constant. Then we define 
\[\kI_{\varepsilon} = \kE_{\varepsilon} \cap \{i \psi_i \leq 2 \log n \, \,\forall \, i =1, \ldots, n\},\]
with $\kE_{\varepsilon}$ as in Lemma \ref{l2}. It follows from \ref{l2} (ii) and (iii) that for all $n$ large enough 
\begin{equation} \label{piep}
\pp(\kI_{\varepsilon}) \geq 1 -2 \varepsilon.
\end{equation}
We now use an argument from \cite[Lemma 7.16]{V} to bound the typical distance.  We call a sequence of distinct vertices $\pi=(\pi_1, \ldots, \pi_k)$  a self-avoiding path. We write  $\pi \subset G_n$ if $\pi_i$ and $\pi_{i+1}$ are neighbors for all $1 \leq i \leq k-1$. Let $\Pi(i,j,k)$ be the set of all self-avoiding paths of length $k$ starting from $v_i$ and ending at $v_j$. We then claim that for all $i,j,k \geq 1$, 
\begin{itemize}
\item[(i)] $\pn (d(v_i,v_j)=k \mid (\varphi_t), \kI_{\varepsilon}) \leq g_k(i,j):= \sum \limits_{\pi \in \Pi(i,j,k)} \pn (\pi \subset G_n \mid (\varphi_t), \kI_{\varepsilon})$, \\
\item[(ii)] $g_{k+1}(i,j)  \leq \sum \limits_{s \neq i,j}  g_1(i,s) g_{k}(s,j)$.
\end{itemize}
The first claim is clear, because if $d(v_i,v_j)=k$ then there exists a self-avoiding path in $\Pi(i,j,k)$ which is in $G_n$. For the second one, we note that for any self-avoiding path $\pi=(\pi_1,\ldots, \pi_k)$,
$$
\pn(\pi \subset G_n \mid (\varphi_t), \kI_{\varepsilon}) = \pn(\pi_1 \sim \pi_2 \mid (\varphi_t), \kI_{\varepsilon}) \times \pn(\bar{\pi} \subset G_n \mid (\varphi_t), \kI_{\varepsilon}),
$$
where $\bar{\pi}=(\pi_2, \ldots, \pi_k)$. Indeed, if $j < k$, then the event that $v_j \sim v_k$ depends only on the $(U_{k,i})_{i\leq m}$. Hence  this result follows from  the facts that the vertices in $\pi$ are distinct and that the  $\{(U_{k,i})_{i\leq m} \}_{k}$ are independent. We  are now in position to  prove (ii):
\begin{align*}
g_{k+1}(i,j)& = \sum \limits_{s \neq i,j} \sum  \limits_{ \substack  { v_i \not \in \bar{\pi} \\ \bar{\pi} \in \Pi(s,j,k) }} \pn (v_i \sim v_s, \bar{\pi} \subset G_n \mid (\varphi_t), \kI_{\varepsilon}) \\
& \leq \sum \limits_{s \neq i,j} \sum  \limits_{  \bar{\pi} \in \Pi(s,j,k) } \pn (v_i \sim v_s \mid (\varphi_t), \kI_{\varepsilon}) \times \pn( \bar{\pi} \subset G_n \mid (\varphi_t), \kI_{\varepsilon}) \\
& = \sum \limits_{s \neq i,j} g_1(i,s) g_k(s,j).
\end{align*}
We prove by induction on $k$ that  there is a positive constant $C$, such that
\begin{align} \label{bkij}
g_k(i,j) \leq \frac{(C\log n)^{2k-1}}{\sqrt{ij}}.
\end{align}
 For $k=1$, it follows from  \eqref{ecp} that  for all  $i<j$, 
\begin{align} \label{cuc}
g_1(i,j) & = \pn(v_i \sim v_j \mid (\varphi_t), \kI_{\varepsilon})\notag\\
& \leq m  \en \left( \frac{\psi_i S_i}{S_{j-1}} \mid (\varphi_t), \kI_{\varepsilon}\right).
\end{align}
We now claim that on $\kI_{\varepsilon}$ 
\begin{equation} \label{pssj}
\frac{\psi_i S_i}{S_{j-1}}\leq \frac{4 \log n}{\sqrt{ij}}.
\end{equation}
Indeed, we recall that on $\kI_{\varepsilon}$,
\[ \psi_i \leq \frac{2 \log n}{i} \,\, \forall \, i =1, \ldots, n \quad \textrm{and } \quad \Big |  \frac{S_i}{S_j} - \left(\frac{i}{j} \right)^{\chi} \Big |\leq  \varepsilon \left(\frac{i}{j} \right)^{\chi} \, \, \forall j \geq i \geq K(\varepsilon),\]
with $K(\varepsilon)$ as in Lemma \ref{l2} (ii). 

If $i<j \leq K(\varepsilon)$ then \eqref{pssj} holds for all $n$ large enough (the left hand-side is bounded by $1$ and the right-hand side tends to infinity). 
 If $j>i\geq K(\varepsilon)$ then 
\begin{eqnarray} \label{sisj}
 \psi_i \frac{S_i}{S_{j-1}} \leq \frac{2 \log n}{i} \times (1+ \varepsilon)\left(\frac{i }{j-1}\right)^{\chi} \leq \frac{4 \log n}{ \sqrt{ij}},
\end{eqnarray} 
since $i<j$ and $\chi \geq 1/2 > \varepsilon$. If $i \leq K(\varepsilon)<j$ then using  $\psi_i \leq 1$ and the fact that the sequence $(S_i)$ is increasing, we obtain
\begin{eqnarray*}
\psi_i \frac{S_i}{S_{j-1}} &\leq&   \frac{S_{K(\varepsilon)}}{S_{j-1}}  \leq   (1+ \varepsilon)\left(\frac{K(\varepsilon) }{j-1}\right)^{\chi}  \leq \frac{4 \log n}{ \sqrt{ij}},
\end{eqnarray*} 
for all $n$ large enough. In any case, \eqref{pssj} holds.   It follows from \eqref{cuc} and \eqref{pssj} that \eqref{bkij} holds for $k=1$.  

Assume now that \eqref{bkij} is true for some $k$, and let us prove it for $k+1$. By using the induction hypothesis and (ii), we get that
\begin{align*}
 g_{k+1}(i,j) & \leq \sum \limits_{s \neq i,j}  g_1(i,s) g_{k}(s,j) \\
 & \leq \sum \limits_{s \neq i,j} \frac{C \log n}{\sqrt{is}} \frac{(C\log n)^{2k-1}}{\sqrt{sj}} \\
 & \leq \frac{(C\log n)^{2k+1}}{\sqrt{ij}},
\end{align*} 
which proves the induction step. Now it follows from  (i) and \eqref{bkij} that 
\begin{align*}
\pn (d(w_1,w_2 ) \leq K \mid (\varphi_t), \kI_{\varepsilon}) & \leq \frac{1}{n^2} \sum \limits_{k=1}^K \sum \limits_{1 \leq i,j \leq n} g_k(i,j) \\
& \leq \frac{1}{n^2} \sum \limits_{k=1}^K \sum \limits_{1 \leq i,j \leq n} \frac{(C \log n)^{2k-1}}{\sqrt{ij}} \\
& \leq \frac{(C \log n)^{2K}}{n}.
\end{align*}
Therefore if  $K= \log n / (3C \log \log n)$, then  $\pn (d(w_1,w_2 \mid (\varphi_t), \kI_{\varepsilon} ) \leq K) =o(1)$. Combining this with \eqref{piep} gives the desired lower bound with $b_2=1/(3C)$. 
\end{proof}
\begin{rem}
\emph{ The  bound in Lemma \ref{ltyd} is probably not sharp. Indeed for the independent model, it is proved in \cite{DVH} or \cite{V}  that if $\alpha >0$ (or equivalent $\chi >1/2$), then  w.h.p. $d(w_1,w_2) \geq c \log n$; and otherwise, w.h.p. $d(w_1,w_2) \geq  \log n/ (C+ \log \log n)$, for some positive constants $c$ and $C$.
}
\end{rem}
\subsection{Graphical construction of the contact process}
We briefly recall here the graphical construction of the contact process (see more in Liggett's book \cite{L}).

Fix $\lambda>0$ and an oriented graph $G$ (recall that a non-oriented graph can also be seen as oriented by associating 
to each edge two oriented edges). Then assign independent Poisson point processes $D_v$ of rate $1$ to each 
vertex $v \in V$ and $D_{e}$ of rate $\lambda$ to each  oriented edge $e$. 
Set also $D_{(v,w)}:=\cup_{e : v\to w}\, D_e$,  
for each ordered pair $(v,w)$ of vertices, where the notation $e: v\to w$ means that the oriented edge $e$ goes from $v$ to $w$.

We say that there is an infection path from $(v,s)$ to $(w,t)$, and we denote it by 
\begin{eqnarray}
\label{vswt}
(v,s)\longleftrightarrow (w,t), 
\end{eqnarray}
either if $s=t$ and $v=w$, or if $s<t$ and if there is a sequence of times $s=s_0< s_1<\ldots<s_l<s_{l+1}=t,$ and a sequence of vertices $v=v_0,v_1,\ldots,v_l=w$ such that for every $i=1,\ldots,l$ 
\begin{displaymath}
\left \{ \begin{array}{ll}
 s_i \in D_{(v_{i-1},v_i)} \quad \textrm{ and }\\
 D_{v_i} \cap [s_i, s_{i+1}] = \varnothing.
\end{array} \right.
\end{displaymath}
Furthermore, for any $A$, $B$ two subsets of $V_n$ and $I$, $J$ two subsets of $[0,\infty)$, we write 
$$A\times I \longleftrightarrow B\times J,$$
if there exists $v\in A$, $w\in B$, $s\in I$ and $t\in J$, such that \eqref{vswt} holds. 
Then for any $A\subset V_n$, the contact process with initial configuration $A$ is defined by  
$$\xi^A_t :=\left\{v \in V_n:  A\times\{0\}\longleftrightarrow (v,t)\right\},$$
for all $t\ge 0$.  
It is well known that $(\xi^A_t)_{t \geq 0}$ has the same distribution as the process defined in the introduction. Moreover, this graphic construction allows us to couple contact processes with different initial states with additivity property: for all $t\geq 0$ and $A,B \subset V$, 
\begin{equation*}
\xi^{A \cup B}_t = \xi^A_t \cup \xi^B_t. 
\end{equation*}

\subsection{Contact process on star graphs} We will see that star graphs play a crucial role in the conservation of the virus on the preferential attachment graph. Hence, it is important to  understand the behavior of the contact process on a single star graph as well as the transmission between them. These have been studied for a long time by many authors, for instance in \cite{P, BBCS1, CD, MVY}. The results we need will be summarized  in  Lemma \ref{lb} and \ref{lst} below. We say that a vertex $v$ is $\textbf{lit}$ (the term is taken from \cite{CD}) at some time $t$ if the proportion of its infected neighbors at time $t$ is larger than $\lambda/(16e)$.
\begin{lem} \label{lb} Let  $(\xi_t)$ be the contact process  on a star graph $S$ with center $v$. There exists a positive constant $c^*$, such that  the following assertions hold.
\begin{itemize}
\item[(i)]  $\mathbb{P}\left( v \textrm{ is lit at time } 1 \mid \xi_0(v)=1\right) \geq (1- \exp(-c^*\lambda |S|))/e. $  \\

\item[(ii)] $\mathbb{P}\left( \exists t >0: v \textrm{ is lit at time t} \mid \xi_0(v)=1\right) \rightarrow 1 \quad$ as $|S| \rightarrow \infty$. \\

\item[(iii)] If $ \lambda ^2 |S| \geq 64 e^2$, and $v$ is lit at time $0$, then $v$ is lit during the time interval  $[ \exp(c^* \lambda^2 |S|), 2\exp(c^* \lambda^2 |S|)] $ with probability larger than $1- 2 \exp (-c^* \lambda ^2 |S|).$ 
\end{itemize} 
\end{lem}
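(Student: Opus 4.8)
The plan is to establish the three parts in order, using the exponential recovery clocks of the graphical representation together with the concentration bound \eqref{ldv} to turn expectations into high‑probability statements. Write $N=|S|-1$ for the number of leaves and let $k_t$ denote the number of infected leaves at time $t$, so that ``$v$ is lit at time $t$'' means exactly $k_t>\lambda N/(16e)$. Recall the dynamics: when $v$ is infected, each healthy leaf is infected at rate $\lambda$ and each infected leaf recovers at rate $1$; when $v$ is healthy it is infected at rate $\lambda k_t$; and $v$ recovers at rate $1$. For (i), I would start from $\xi_0(v)=1$ and condition on the event $A$ that $v$ never recovers on $[0,1]$, which has probability $e^{-1}$. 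On $A$ the $N$ leaves evolve independently, each being a two‑state chain with infection rate $\lambda$ and recovery rate $1$; hence, whatever its state at time $0$, a given leaf is infected at time $1$ with conditional probability at least $\tfrac{\lambda}{1+\lambda}\bigl(1-e^{-(1+\lambda)}\bigr)\ge c\lambda$ for $\lambda$ small. Dominating each leaf indicator from below by a Bernoulli variable of mean $c\lambda$ and applying \eqref{ldv} gives that, on $A$, with probability at least $1-e^{-c'\lambda N}$ there are more than $\lambda N/(16e)$ infected leaves at time $1$. Since $N\ge|S|/2$, multiplying by $\mathbb{P}(A)=e^{-1}$ and shrinking the constant yields the bound $c^*(1-e^{-c^*\lambda|S|})$.

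For (ii) I would iterate (i) through a restart argument. Put $\tau_0=0$ and, while the process is alive, let $\tau_{j+1}$ be the first integer time after $\tau_j+1$ at which $v$ is infected. By (i) and the Markov property, given that $v$ is infected at $\tau_j$, the conditional probability that $v$ is lit somewhere in $[\tau_j,\tau_j+1]$ is at least $q:=c^*(1-e^{-c^*\lambda|S|})$; hence the only way never to be lit is for the process to die out, before being lit, after finitely many such trials. It therefore suffices to show that $\mathbb{P}(\text{the process dies before }v\text{ is ever lit}\mid\xi_0(v)=1)\to 0$ as $|S|\to\infty$. For this I would compare $(k_t)$, as long as it stays below $\lambda N/(16e)$, with a biased nearest‑neighbour walk: its up‑rate is $\lambda(N-k)\asymp\lambda N$, while its down‑rate plus the recovery rate of $v$ is $O(k+1)$, so $(k_t)$ reaches level $\lceil\log|S|\rceil$ before hitting $0$ with probability $1-O\bigl((\lambda N)^{-1}\bigr)$; once $k\asymp\log|S|$ the center is reinfected within time $O(1)$ with high probability, and boundedly many further rounds push $k$ past $\lambda N/(16e)$. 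Letting $|S|\to\infty$ gives (ii).

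For (iii), assume $v$ is lit at time $0$ and $\lambda^2|S|\ge 64e^2$. Then whenever $k\ge\lambda N/(16e)$ and $v$ is healthy, $v$ is reinfected at rate $\lambda k\ge\lambda^2N/(16e)\ge 2e>1$ (using $N\ge|S|/2$), so $v$ is infected for an overwhelming fraction of time and $(k_t)$ is pulled towards its quasi‑equilibrium value, of order $\lambda N$. I would argue in two steps. \emph{Step 1:} within time $1$, with probability at least $1-e^{-c\lambda^2N}$, $k_t$ reaches the buffer level $\lambda N/4$ --- a large‑deviation estimate obtained by conditioning on the (favourable) trajectory of $v$ and applying \eqref{ldv} to the numbers of infections and recoveries among the leaves on $[0,1]$. \emph{Step 2:} starting from $k_0\ge\lambda N/4$, the first time $k_t$ drops below $\lambda N/(16e)$ exceeds $e^{c\lambda^2N}$ with probability at least $1-e^{-c\lambda^2N}$: within any unit‑length window a drop of order $\lambda N$ would be required, which --- again conditioning on the trajectory of $v$ and using an exponential supermartingale (or \eqref{ldv} for the leaf flips) --- has probability at most $e^{-2c\lambda^2N}$, and there are at most $e^{c\lambda^2N}$ such windows in $[0,e^{c\lambda^2N}]$, so a union bound via the Markov property finishes it. Combining the two steps, from ``lit'' at time $0$ the process stays lit throughout $[0,e^{c'\lambda^2N}]$ with probability at least $1-2e^{-c^*\lambda^2|S|}$ (absorbing $N\ge|S|/2$ into the constants), and choosing $c^*$ small enough that $2\exp(c^*\lambda^2|S|)\le\exp(c'\lambda^2N)$ gives (iii).

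The step I expect to be the main obstacle is Step 2 of part (iii): showing that it takes a time \emph{exponential} in $\lambda^2|S|$, not merely polynomial, for the number of infected leaves to fall from its quasi‑equilibrium level of order $\lambda N$ down to the ``lit'' threshold. This needs the right exponential supermartingale --- or a careful analysis of the excursions of $(k_t)$ away from equilibrium, controlling the rare long intervals during which the center is healthy --- and the constants must be arranged so that the per‑window failure probability beats the union bound over exponentially many windows. The random‑walk comparison in (ii), ensuring the process does not die before becoming lit once $|S|$ is large, is the other point requiring some care.
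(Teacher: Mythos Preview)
Your outline is essentially sound, but it is worth noting that the paper does not prove this lemma from scratch: parts (i) and (ii) are quoted verbatim as Lemma~3.1 (i), (iii) of \cite{MVY}, and part (iii) is obtained by combining two statements from \cite{CD} about an intermediate ``hot'' level (proportion $\lambda/(8e)$ of infected leaves): lit $\Rightarrow$ hot before time $\exp(c^*\lambda^2\deg(v))$, and hot $\Rightarrow$ lit throughout $[0,2\exp(c^*\lambda^2\deg(v))]$, each with the stated failure probability. Your ``buffer level $\lambda N/4$'' in Step~1 of (iii) is exactly this ``hot'' device, so on (iii) you have rediscovered the structure of the argument the paper is citing; what you call ``the main obstacle'' is precisely the content of the second bullet from \cite{CD}, and your exponential‑supermartingale/union‑bound plan is the standard way it is carried out there.

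Two places where your write‑up would need tightening before it stands on its own. First, in (ii) the iteration of (i) is superfluous and slightly ill‑posed: your stopping times $\tau_{j+1}$ (``the first integer time after $\tau_j+1$ at which $v$ is infected'') need not exist, and since each trial only succeeds with probability bounded away from $1$, the iteration alone cannot give the conclusion. The real work is your second paragraph --- the biased‑walk comparison showing $k_t$ climbs past the lit threshold before extinction with probability $1-o(1)$ --- and that argument already proves (ii) directly; but you must explicitly handle the excursions of the center to state $0$, during which $k_t$ can only decrease, rather than folding them into an $O(k+1)$ down‑rate. Second, in (iii) your Step~2 union bound needs the per‑window failure probability to be at most $e^{-2c\lambda^2 N}$ \emph{uniformly over the state at the start of the window}, which requires either the Markov property together with the deterministic lower bound $k\ge\lambda N/(16e)$, or a genuine supermartingale that does not reset at window boundaries; as written, the ``Markov property'' invocation is doing more than it can without this uniformity.
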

\begin{proof}
Parts (i) and (ii) are exactly Lemma 3.1 (i), (iii)   in \cite{MVY}. For (iii) we need an additional definition: a vertex $v$ is said to be {\bf hot}  at some time $t$ if the proportion of its infected neighbors at time $t$ is larger than $\lambda/(8e)$. Then in \cite{CD} the authors proved (with different constants in the definition of lit and hot vertices, but this does not effect the proof) that 
\begin{itemize}
\item[$\bullet$] if $v$ is lit at some time $t$, then it becomes hot before $t+ \exp(c^* \lambda^2 \deg(v))$,
\item[$\bullet$]if $v$ is hot at some time $t$, then it remains lit until $t+ 2 \exp(c^* \lambda^2 \deg(v))$, 
\end{itemize}
with probability larger than $1- \exp(-c^* \lambda^2 \deg(v))$. Now (iii) follows from these claims.
\end{proof}
The following result is  Lemma 3.2 in \cite{MVY}.
\begin{lem} \label{lst} 
Let us consider the contact process on a graph $G=(V,E)$. There exist positive constants $c^*$ and $\lambda_0$, such that if $0 < \lambda <\lambda_0$, the following holds. Let $v $ and $ w$ be two  vertices satisfying  $\deg(v) \geq \frac{7}{c^*} \frac{1}{\lambda ^2} \log \left(\frac{1}{\lambda} \right) d(v,w)$. Assume that $v$ is lit at time $0$. Then $w$ is lit before $\exp(c^* \lambda^2 \deg(v))$ with probability larger than $1- 2 \exp (-c^* \lambda ^2 \deg(v))$.
\end{lem}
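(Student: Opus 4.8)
The plan is to drive the infection from $v$ to $w$ along a fixed geodesic by a relay argument. Write $D=\deg(v)$ and $d=d(v,w)$, fix a self-avoiding path $v=u_0,u_1,\dots,u_d=w$, and assume $d\ge 1$. The hypothesis $D\ge\frac{7}{c^*\lambda^2}\log(1/\lambda)\,d$ serves two purposes: it makes $\lambda^2D$ large, and it makes the time budget $\exp(c^*\lambda^2D)$ at least $\lambda^{-7d}$, which comfortably exceeds the time $\asymp\lambda^{-d}$ that a subcritical relay needs to traverse a path of length $d$. By Lemma~\ref{lb}(iii) (legitimate since $\lambda^2D\ge 64e^2$ for $\lambda$ small, and $v$ is lit at time $0$), with probability at least $1-2\exp(-c^*\lambda^2D)$ the vertex $v$ is lit at every time of a window $W\subseteq[0,2\exp(c^*\lambda^2D)]$ of length $\exp(c^*\lambda^2D)$; call this event $\mathcal{G}$. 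On $\mathcal{G}$ the vertex $v$ is re-infected at rate at least $\lambda^2D/(16e)\gg 1$, so, up to excluding a further event of probability $\le\exp(-c\lambda^2D)$, on $\mathcal{G}$ the vertex $v$ is infected for at least half of every subinterval of $W$ of length $\lambda^{-d}$.

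The core estimate concerns a generic interval $J\subseteq W$ of length $\ell_0:=\lambda^{-d}$ on which $v$ is infected at least half the time: conditionally on this, with probability at least an absolute constant $\delta_0>0$ the vertex $w$ becomes infected during $J$ and then lit within one further time unit. For the infection part I would compare the true process, restricted to the path and tracking only infections passed from $u_{i-1}$ to $u_i$ (a monotone lower bound), with the process in which one follows \emph{genealogically} the infected intervals of $u_1,\dots,u_d$: each infected interval of $u_i$ lasts an independent $\mathrm{Exp}(1)$ time and, during its lifetime, produces infected intervals of $u_{i+1}$ at rate $\lambda$, while those of $u_1$ are produced at rate $\lambda$ whenever $v$ is infected. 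This is a sequence of heavily subcritical branching populations, so the expected number of infected intervals of $u_d=w$ inside $J$ is of order $\lambda^{d}\ell_0\asymp 1$, and — since for such a population the variance of the count is comparable to its mean with an absolute constant — the Paley--Zygmund inequality gives that $w$ is infected during $J$ with probability at least $\delta_0$. For the "then lit" part: if $\deg(w)\le 16e/\lambda$, a single infected neighbour (the vertex $u_{d-1}$) already makes $w$ lit; if $\deg(w)>16e/\lambda$, then $\lambda\deg(w)$ is large and Lemma~\ref{lb}(i), applied to the star at $w$ and transferred to $G$ by monotonicity, makes $w$ lit one unit after it is infected with probability at least $c^*/2$.

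To conclude I would iterate over disjoint blocks. Partition $W$ into $M:=\exp(c^*\lambda^2D)/\ell_0$ consecutive intervals of length $\ell_0$, and at the start of each block restart the relay from the all-healthy configuration on $u_1,\dots,u_d$ (again a monotone lower bound). Then, conditionally on $\mathcal{G}$ and on the trajectory of $v$, distinct blocks use disjoint portions of the recovery and transmission clocks of $u_1,\dots,u_d$, so the success events above are conditionally independent, each of probability at least $\delta_0$. Hence the conditional probability that $w$ is never lit during $W$ is at most $(1-\delta_0)^M\le\exp(-\delta_0M)$. Writing $x=c^*\lambda^2D$ and using $x\ge 7d\log(1/\lambda)$, one has $M=e^{x}\lambda^{d}=e^{x-d\log(1/\lambda)}\ge e^{6x/7}$, so $\exp(-\delta_0M)\le\exp(-\delta_0e^{6x/7})\le 2e^{-x}$ once $\lambda$ is small. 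Adding $\mathbb{P}(\mathcal{G}^{c})\le 2e^{-x}$ and the negligible event above, and absorbing the harmless factors (including the $2$ in the length of $W$) into a smaller choice of $c^*$, one gets that $w$ is lit before $\exp(c^*\lambda^2\deg(v))$ with probability at least $1-2\exp(-c^*\lambda^2\deg(v))$.

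The main obstacle is the branching comparison in the second paragraph: one wants an \emph{absolute} lower bound, uniform in $d$, for the probability that the relay reaches $w$ within a window of length $\asymp\lambda^{-d}$. The heuristic is transparent, but making it rigorous requires comparing the single-site contact dynamics on the path with a genuine branching process, which is subtle because overlapping infected intervals of a given $u_i$ share portions of its recovery clock; one must restore independence (for instance by a thinning/renewal device spawning children only from disjoint clock segments) while keeping enough of them to preserve the $\lambda^{d}\ell_0\asymp 1$ first-moment count — a constant-per-level loss that the generous exponent $7$ in the hypothesis absorbs — and then control the second moment. Everything else — the appeal to Lemma~\ref{lb}, the monotone comparisons with star and path sub-processes, and the elementary arithmetic with $x=c^*\lambda^2D$ — is routine.
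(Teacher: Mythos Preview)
The paper does not actually prove this lemma: it is quoted verbatim as Lemma~3.2 of \cite{MVY}, with no argument given. So there is no ``paper's proof'' to compare against; what one can compare to is the standard proof in \cite{MVY} (which in turn follows \cite{CD}). Your overall architecture is exactly that proof: use Lemma~\ref{lb}(iii) to keep $v$ lit on a window of length $\exp(c^*\lambda^2 D)$, slice the window into blocks, in each block attempt to push the infection along a fixed geodesic to $w$, and then light $w$ via Lemma~\ref{lb}(i) (or the trivial observation when $\deg(w)\le 16e/\lambda$). The arithmetic with $x=c^*\lambda^2 D$ and $M\ge e^{6x/7}$ is correct, as is the conditional-independence argument across blocks after conditioning on the trajectory of $v$ and restarting the path from healthy.

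The one soft spot is precisely the one you identify, and it is self-inflicted. You do not need a Paley--Zygmund/branching comparison to get a uniform positive success probability per block of length $\lambda^{-d}$. The standard and much cheaper device is to take blocks of length $2d$ (say) and lower-bound the probability of the \emph{explicit} event that the infection walks down the path: for each $i$, the first infection arrow on the edge $(u_i,u_{i+1})$ after $u_i$ becomes infected arrives before the first recovery mark at $u_i$. By the memoryless property these $d$ events are independent, each of probability $\lambda/(1+\lambda)\ge\lambda/2$, and conditionally on success the total time is a sum of $d$ i.i.d.\ $\mathrm{Exp}(1+\lambda)$ variables, hence $\le 2d$ with probability $\ge 1/2$. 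This gives a per-block success probability $\ge\tfrac12(\lambda/2)^d$, with no dependency issues at all; plugging this into your block iteration, $e^{x}(\lambda/2)^{d}/(2d)\ge e^{5x/7}/(2x)$ for $\lambda$ small, and the rest is identical. This is the route taken in \cite{MVY,CD}; your second-moment approach can be made rigorous, but it is an unnecessary detour.
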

\section{Proof of  Proposition \ref{prp}}
In this section we  study the contact process on the P\'olya-point graph  $(T,o)$.  To prove Proposition 1.2, we have to show that 
\begin{align} \label{uopr}
\pp(\xi^o_t \neq \varnothing \,\,\forall t \geq 0) \lesssim \lambda^{1 + \frac{2}{\psi}} |\log \lambda|^{\frac{-1}{\psi}},
\end{align}
and 
\begin{align} \label{lopr}
\pp(\xi^o_t \neq \varnothing \,\,\forall t \geq 0) \gtrsim \lambda^{1 + \frac{2}{\psi}} |\log \lambda|^{\frac{-1}{\psi}}.
\end{align}
The proof of \eqref{uopr} is based on the proof of the upper bound in Proposition 1.4 in \cite{MVY} for the case of the contact process on Galton-Watson trees, and we put it in Appendix.

\subsection{Proof of \eqref{lopr}} In this part, we first estimate the probability  that there is an infinite sequence of vertices, including a neighbor of the root, with larger and larger degree and a small enough  distance between any two consecutive elements of the sequence.  We then repeatedly  apply Lemma \ref{lb} and \ref{lst}  to bound from below the probability that the virus  propagates along  these vertices, and like this survives forever. To this end, we denote by
 \begin{align} \label{csp}
 \varphi(\lambda) =\frac{7}{c^*} \frac{1}{\lambda ^2} \log \left(\frac{1}{\lambda} \right),
 \end{align}
 with $c^*$ as in Lemma \ref{lb} and \ref{lst}.
 
\noindent 
We denote by $ w_0 =(0)$, and $x_0 = x_{w_0}$.  For any $i \geq 1$, let 
$$w_i=(0,1,...,1) \textrm{ and } x_i = x_{w_i},$$
where $w_i$ has $i$ 1s. 
  Then $w_i$'s degree  conditioned on $x_i$  is $m+1$ plus a Poisson random variable with parameter
$$\frac{\gamma}{x_i^{\psi}} \int_{x_i}^1 \psi x^{\psi-1} dx = \gamma \frac{1- x_i^{\psi}}{x_i^{\psi}},$$
where $\gamma $ is a Gamma random variable with parameters $a=m+2mr+1$ and $1.$ Therefore letting $\kappa = (1-x_i^{\psi})/x_i^{\psi}$,  we have
\begin{align} \label{8}
 \mathbb{P}(\textrm{deg}(w_i)= m+1+k \mid x_i)& = \mathbb{E} \left( \frac{ e^{-\gamma \kappa}}{k!} (\gamma \kappa)^k \, \Big| x_i \right) =  \frac{\kappa ^k\Gamma(k+a)  }{ (\kappa+1)^{k+a} \Gamma(a) k!} \notag \\
& = \frac{\Gamma(k+a)}{ \Gamma(a) k!} (1-x_i^{\psi})^{k} x_i^{a \psi },
\end{align} 
where $\Gamma(b)= \int_0^{\infty} x^{b-1} e^{-x} dx$.
\begin{lem} \label{ehd}
 There is a positive constant $c$, such that for $\lambda$ small enough,
$$
\pnd (\mathcal{N}) \geq c \varphi(\lambda)^{-1/\psi},
$$
where 
 $$\mathcal{N}= \{ \exists (j_{\ell})_{\ell \geq 1}: j_{1}=1, \deg(w_{j_{\ell}}) \geq 2^{\ell+1} \varphi(\lambda)/ \psi \geq \varphi (\lambda) d(w_{j_{\ell}},w_{j_{\ell+1}}) \, \, \forall \ell \geq 1 \}.$$
\end{lem}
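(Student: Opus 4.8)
The plan is to build the sequence $(j_\ell)$ greedily, one vertex at a time, following the spine $w_1, w_2, w_3, \ldots$ of left-children of the root. The key structural fact is the tail estimate \eqref{d2}: $q_i(k) \geq c\,k^{-1-1/\psi}$ for $i \geq 1$, with $c$ independent of $i$. So for each fixed $i \geq 1$, the probability that $\deg(w_i) \geq D$ is at least $c' D^{-1/\psi}$ for a universal constant $c'$. Moreover, by Lemma \ref{cgd} the degrees along the spine are stochastically increasing, so conditioning on $\deg(w_i)$ being large does not hurt the chance that $\deg(w_{i'})$ is large for $i' > i$. I would first show the one-step statement: given that we have chosen $j_\ell$ with $\deg(w_{j_\ell}) \geq 2^{\ell+1}\varphi(\lambda)/\psi$, the conditional probability that there exists $j_{\ell+1} > j_\ell$ (to be found among $w_{j_\ell+1}, w_{j_\ell+2}, \ldots$, which are descendants of $w_{j_\ell}$ along the spine and hence at controlled distance) with $\deg(w_{j_{\ell+1}}) \geq 2^{\ell+2}\varphi(\lambda)/\psi$ and $d(w_{j_\ell}, w_{j_{\ell+1}}) \leq 2^{\ell+1}\varphi(\lambda)/(\psi \varphi(\lambda)) = 2^{\ell+1}/\psi$ is bounded below by some $p_{\ell+1} > 0$.

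The crucial point for the distance constraint: $w_{j_\ell}$ and $w_{j_{\ell+1}}$ both lie on the spine, so $d(w_{j_\ell}, w_{j_{\ell+1}}) = j_{\ell+1} - j_\ell$. Hence I need $j_{\ell+1} - j_\ell \leq 2^{\ell+1}/\psi$, i.e. within the next $\lfloor 2^{\ell+1}/\psi \rfloor$ spine vertices I must find one with degree $\geq 2^{\ell+2}\varphi(\lambda)/\psi$. Each such spine vertex $w_i$ independently — conditionally on positions, by fact (i) in the proof of Lemma \ref{lsd}, and using the stochastic monotonicity to remove the conditioning on $\deg(w_{j_\ell})$ — has probability at least $c'(2^{\ell+2}\varphi(\lambda)/\psi)^{-1/\psi} \asymp c'' 2^{-(\ell+2)/\psi}\varphi(\lambda)^{-1/\psi}$ of having degree at least $2^{\ell+2}\varphi(\lambda)/\psi$. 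With $N_\ell := \lfloor 2^{\ell+1}/\psi\rfloor$ independent trials, the probability of at least one success is at least $1 - (1 - c''2^{-(\ell+2)/\psi}\varphi(\lambda)^{-1/\psi})^{N_\ell} \geq 1 - \exp(-c'' N_\ell 2^{-(\ell+2)/\psi}\varphi(\lambda)^{-1/\psi})$. Since $N_\ell \asymp 2^\ell$ grows geometrically while $2^{-(\ell+2)/\psi}$ decays only geometrically, for $\ell$ large this exponent is large, so $p_{\ell+1} \to 1$; and one checks $\prod_{\ell \geq 1} p_{\ell+1}$ converges to a positive constant provided $\lambda$ is small enough. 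The first step ($j_1 = 1$) costs a genuine factor: we need $\deg(w_1) \geq 4\varphi(\lambda)/\psi$, which has probability $\asymp \varphi(\lambda)^{-1/\psi}$ by \eqref{d2}. Multiplying gives $\pnd(\mathcal N) \gtrsim \varphi(\lambda)^{-1/\psi} \cdot \prod_{\ell} p_{\ell+1} \gtrsim \varphi(\lambda)^{-1/\psi}$.

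The main obstacle is handling the conditioning correctly so that the trials can be treated as independent with the stated lower bounds. The degrees $\deg(w_i)$ along the spine are not independent — they are coupled through the positions $x_i$, which form a decreasing sequence. The clean way around this is to use the stochastic domination machinery already set up: by fact (i) of the proof of Lemma \ref{lsd}, conditionally on all the positions $(x_i)_{i\geq 0}$ the degrees are independent, and by monotonicity of Poisson variables in their parameter (facts (ii)–(iii) there), $x_i$ small forces $\deg(w_i)$ stochastically large; combined with $x_i \leq x_{i-1}$, conditioning on a large value of $\deg(w_{j_\ell})$ can only help later degrees. So I would phrase the induction as: with probability at least $p_{\ell+1}$, conditionally on $\mathcal F_{j_\ell} := \sigma(x_0,\ldots,x_{j_\ell}) \vee \sigma(\deg(w_i): i \leq j_\ell)$ restricted to the event that a valid $j_\ell$ exists, a valid $j_{\ell+1}$ exists among the next $N_\ell$ spine vertices. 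A short lemma packaging "$\mathbb P(\deg(w_i) \geq D \mid x_j \text{ for } j < i) \geq c' D^{-1/\psi}$ uniformly" — which follows by integrating \eqref{8} over $x_i \sim \mathcal U([0,x_{i-1}])$ and using that the resulting tail dominates the unconditional one from \eqref{d2} up to constants — would make the argument rigorous. One must also double-check that $\varphi(\lambda) \to \infty$ as $\lambda \to 0$ (immediate from \eqref{csp}) so that the degree thresholds are eventually large enough for all the tail estimates and for Lemmas \ref{lb} and \ref{lst} to apply downstream.
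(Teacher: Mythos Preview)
Your greedy scheme has a genuine arithmetic gap. You claim that with $N_\ell \asymp 2^{\ell}$ trials and per-trial success probability $\asymp 2^{-(\ell+2)/\psi}\varphi(\lambda)^{-1/\psi}$, the exponent $N_\ell \cdot 2^{-(\ell+2)/\psi}\varphi(\lambda)^{-1/\psi}$ is large for large $\ell$. But recall $0<\psi\le 1$, so $1/\psi\ge 1$ and hence
\[
N_\ell \cdot 2^{-(\ell+2)/\psi}\varphi(\lambda)^{-1/\psi}\ \asymp\ 2^{\ell(1-1/\psi)}\,\varphi(\lambda)^{-1/\psi}\ \le\ \varphi(\lambda)^{-1/\psi},
\]
which is small (and even tends to $0$ in $\ell$ when $\psi<1$). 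Thus each $p_{\ell+1}$ is of order $\varphi(\lambda)^{-1/\psi}$, not close to $1$, and $\prod_\ell p_{\ell+1}=0$. The monotonicity remark at the end does not rescue this: the ``short lemma'' you propose, $\mathbb P(\deg(w_i)\ge D\mid x_j,\,j<i)\ge c' D^{-1/\psi}$, only reproduces the unconditional tail and still forces you to pay a factor $\varphi(\lambda)^{-1/\psi}$ at every step.

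The point you are missing is that the spine positions are strongly correlated in your favour: $x_i$ has the law of $x_1 U_1\cdots U_{i-1}$ with the $U_j$ i.i.d.\ uniform on $[0,1]$, so once $x_1$ is small, \emph{all} later $x_i$ are automatically small, and by \eqref{8} (or \eqref{e28}) the corresponding degrees are then large with probability close to $1$. The paper exploits exactly this: it fixes a deterministic sequence $j_\ell=4\ell/\psi$, pays the single factor $\varphi(\lambda)^{-1/\psi}$ for the event $\{x_1\le \gamma(\lambda)\}$ (equation \eqref{e33}), and then shows via Markov's inequality on $U_1\cdots U_k$ (equation \eqref{e29}) that $x_{j_\ell}\lesssim 4^{-\ell/\psi}\gamma(\lambda)$ with probability $1-O(4^{-\ell/\psi})$, which in turn forces $\deg(w_{j_\ell})\ge 2^{\ell+1}\varphi(\lambda)/\psi$ with probability $1-O(2^{-\ell a})$ by \eqref{e28}. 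These tail probabilities are summable, giving $\mathbb P\bigl(\bigcap_{\ell\ge 2}\mathcal N_\ell\mid\mathcal N_1\bigr)\ge 1/4$. In short: work with the positions $x_i$, not with the marginal degree tails $q_i$; the expensive event is paid once, at $\ell=1$.
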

\begin{proof}
It follows from Markov's inequality  that for any $k\geq 1$,
 \begin{equation} \label{e29}
 \mathbb{P} \left( \prod_{i=1}^k  U_i > 2^{-(k+1)/2} \right) \leq 2^{-(k-1)/2},
 \end{equation}
 where $(U_i)$ is a sequence of i.i.d. uniform random variables  in $[0,1]$. 

 Since $\Gamma(k+a)/k! \asymp k^{a-1}$, there is a positive constant  $C$, such that for all $k \geq 1$
$$\frac{\Gamma(k+a)}{ \Gamma(a) k!}  \leq C k^{a-1}.$$ 
Then it follows from \eqref{8} that 
 \begin{equation} \label{e28}
 \pnd(\deg(w_i)\leq m+1+ (c/x_i^{ \psi}) \mid x_i) \leq \sum \limits_{k=0}^{[c /x_i^{ \psi}]} C k^{a-1}  x_i^{a \psi } \leq C c^{a},
 \end{equation}
 for any $c>0$. Set $j_1=1$ and $j_{\ell}=  [4\ell/ \psi]$ for $\ell \geq 2$. Then   define   
  $$\mathcal{N}_{\ell}=  \big \{  x_{j_\ell} \leq ( 4^{ \ell } \varphi(\lambda)/(c \psi))^{-1/\psi}, \textrm{deg}(w_{j_{\ell}}) \geq 2^{\ell+1} \varphi(\lambda)/\psi \big \} $$
  for all $\ell \geq 1$,  where $c$ is a positive constant to be chosen later. 
  
  \vspace{0,2 cm}
   Since $2^{\ell+1}\varphi(\lambda)/ \psi \geq 4 \varphi(\lambda)/ \psi =  \varphi(\lambda) d(w_{j_{\ell }}, w_{j_{\ell+1}})$,  we have
  \begin{equation} \label{deh}
  \kN \supset \bigcap _{\ell=1}^{\infty} \mathcal{N}_{\ell} .
  \end{equation}  
 Since $x_{j_{\ell}}$ is distributed as $x_1U_1 \ldots U_{j_{\ell}-1}$, applying $(\ref{e29})$ gives that
 $$
\pnd(x_{j_{\ell}} \leq x_1 4^{- \ell /\psi} ) \geq 1 - 2(4^{-\ell /\psi}).
 $$
 Therefore
 \begin{equation} \label{e30} 
 \pnd \left(x_{j_{\ell}} \leq  ( 4^{\ell} \varphi(\lambda)/(c \psi))^{-1/\psi} \mid \mathcal{N}_1 \right) \geq 1 - 2(4^{-\ell /\psi}).
 \end{equation}
  By using $(\ref{e28})$ with $(2^{1- \ell}c)$ instead of $c$ we obtain that 
 \begin{equation} \label{e31}
 \pnd \left(\textrm{deg}(w_{j_{\ell}} ) \geq 2^{\ell+1} \varphi(\lambda)/\psi \mid x_{j_{\ell}} \leq  ( 4^{\ell} \varphi(\lambda)/(c \psi))^{-1/\psi} \right) \geq 1- C (2^{1-\ell}c)^a.
  \end{equation}
Then  it follows from \eqref{e30} and \eqref{e31} that 
  \begin{equation} \label{e32}
  \pnd \left(\bigcap \limits_{\ell=2}^{\infty} \mathcal{N}_{\ell} \mid \mathcal{N}_1 \right) \geq 1- 2\sum \limits_{\ell=2}^{\infty}  4^{-\ell /\psi} - C\sum \limits_{\ell=2}^{\infty} (2^{1-\ell}c)^a \geq 1/4,
  \end{equation}
  provided $c$ is small enough.  We now estimate $\mathbb{P}(\mathcal{N}_1)$. Let 
  $$\bar{\varphi}(\lambda)=(4 \varphi(\lambda)/(c \psi ))^{-1/\psi}.$$  
 Recall that $x_1$ is  uniformly distributed on $[0,x_0]$, with $x_0 \sim U_0^{\chi}$ and $U_0 \sim \kU([0,1])$. Therefore
 \begin{align} \label{e33}
 \pnd (x_1 \leq \bar{\varphi}(\lambda) ) &= \mathbb{E} \left(\frac{\min\{\bar{\varphi}(\lambda) , x_0\}}{x_0} \right) \notag \\
 & \geq \bar{\varphi}(\lambda) \mathbb{P}(x_0 \geq \bar{\varphi}(\lambda)) \notag\\
 & = \bar{\varphi}(\lambda) (1-\bar{\varphi}(\lambda)^{1/\chi}) \notag \\
 &\geq \bar{\varphi}(\lambda)/2,
 \end{align}
  for $\lambda$ small enough. On the other hand,  \eqref{e28} gives that for $c$ small enough
  \begin{equation} \label{e34}
  \pnd(\mathcal{N}_1 \mid x_1 \leq \bar{\varphi}(\lambda) ) \geq 1- C c^{a} \geq 1/2.
  \end{equation}
 We thus can choose $c$ such that the two inequalities in \eqref{e32} and \eqref{e34} are satisfied.
Now it follows from \eqref{deh}, \eqref{e32}, \eqref{e33} and \eqref{e34} that 
$$\pnd (\kN) \gtrsim \bar{\varphi}(\lambda),$$
which implies the result.
\end{proof}
\noindent {\it Proof of \eqref{lopr}.} By repeatedly applying Lemma \ref{lst} to the pair of  vertices $(w_{i_{\ell}},w_{i_{\ell+1}})$,  we  obtain that 
\begin{align} \label{e36}
\mathbb{P}(\xi_t \neq \varnothing \, \forall t \geq T \mid \mathcal{N}, w_1 \textrm{ is lit at some time } T) &\geq 1 - 2 \sum \limits_{\ell =1}^{\infty} \exp(- c^* \lambda^2 2^{\ell+1} \varphi(\lambda)/\psi) \notag \\
& \geq 1- 2 \sum \limits_{\ell =1}^{\infty} \exp(- 7 (2^{\ell+1}) |\log  \lambda| / \psi) \notag \\
& \geq 1/2,
\end{align}
for $\lambda$ small enough. On the other hand, by using Lemma $\ref{lb}$ (i), we have
\begin{align} \label{e37}
&\mathbb{P}(w_1 \textrm{ is lit at some time } T \mid \kN, o \textrm{ is infected at  time } 0) \notag\\
& \geq c \lambda \mathbb{E}(1 - \exp(-c^* \lambda \deg(w_1)) \mid \kN ) \geq c \lambda/2,
\end{align} 
for some $c>0$ (note that on $\kN$, we have $c^* \lambda \deg(w_1) \geq 7$). Now it follows from \eqref{e36}, \eqref{e37} and Lemma \ref{ehd} that 
\[\mathbb{P}(\xi_t \neq \varnothing \, \forall t \geq 0) \geq (1/2) \times (c \lambda /2) \times \mathbb{P}(\kN) \gtrsim \lambda^{1 + 2/ \psi} |\log \lambda|^{-1/ \psi},\]
which proves \eqref{lopr}  \hfill $\square$

\section{Proof of Theorem $\ref{td}.$}
By using the self-duality of the contact process \eqref{sdl}, we see that to prove \eqref{g2}, it is sufficient to show that 
\begin{align} \label{tub}
\pn \left( \frac{1}{n} \sum \limits_{v \in V_n} 1(\{\xi^v_{t_n} \neq \varnothing \}) \leq C \lambda^{1+ 2/ \psi} |\log \lambda |^{-1/\psi}  \right) =1 -o(1),
\end{align} 
and 
\begin{align} \label{tlb}
\pn \left( \frac{1}{n} \sum \limits_{v \in V_n} 1(\{\xi^v_{t_n} \neq \varnothing \}) \geq c \lambda^{1+ 2/ \psi} |\log \lambda |^{-1/\psi}  \right) =1 -o(1),
\end{align} 
for some positive constants $c$ and $C$. We will prove these two statements in the next two subsections.
\subsection{Proof of \eqref{tub}} For $r \geq 1$, we define
\begin{align*}
\mathcal{L}_T(o,r) = \{(o,0) \leftrightarrow B_{T}(o,r)^c \times \mathbb{R}_+\},
\end{align*}
the event that the contact process on the P\'olya-point graph starting from the root infects vertices outside $B_{T}(o,r)$. Then we have 
\begin{align*}
\{\xi^o_t \neq \varnothing \,\forall t\} = \cap_{r=1}^{\infty} \mathcal{L}_T(o,r).
\end{align*}
Hence, it follows from \eqref{uopr} that there are positive constants $C$ and $R=R(\lambda)$, such that
\begin{align} \label{gobr}
\mathbb{P}(\mathcal{L}_T(o, R)) \leq C \lambda^{1 + \frac{2}{\psi}} |\log \lambda|^{\frac{-1}{\psi}}.
\end{align}
For any $v \in V_n$ and $R$ as in \eqref{gobr},  we define
\begin{align*}
\mathcal{L}_n(v,R) = \{(v,0) \leftrightarrow B_{G_n}(v,R)^c \times \mathbb{R}_+\}
\end{align*}
and 
$$X_v = 1 (\mathcal{L}_n(v,R)).$$
Theorem \ref{t2.3} yields that 
\begin{equation} \label{e23}
\lim \limits_{n \rightarrow \infty} \pn(\mathcal{L}_n(u,R) ) =  \mathbb{P}(\mathcal{L}_T(o, R)),
\end{equation}  
where $u$ is a uniformly chosen vertex from $V_n$. By combing this with \eqref{gobr} we obtain that
\begin{align*}
\lim \limits_{n \rightarrow \infty} \pn(X_u =1) \leq  C \lambda^{1+ 2/ \psi} |\log \lambda |^{-1/\psi},
\end{align*}
 or equivalently
\begin{align} \label{mx}
\lim \limits_{n \rightarrow \infty} \frac{1}{n} \sum \limits_{v \in V_n}\pn(X_v =1) \leq  C \lambda^{1+ 2/ \psi} |\log \lambda |^{-1/\psi}.
\end{align}
Now, let us consider  
$$W_n=\{(v,v') \in V_n \times V_n : d(v,v') \geq 2R+3\},$$
with $R$ as in \eqref{gobr}. Since $R +1 \leq b_2\log n / (\log \log n)$ for $n$ large enough,  Lemma \ref{ltyd} implies that
$$\sum \limits_{v,v' \in V_n}  \pn((v,v') \not \in W_n) = o(n^2).$$  
On the other hand, if $(v,v') \in W_n$ then $X_v$ and $X_{v'}$ are independent.  Therefore
\begin{align} \label{cx}
\sum \limits_{v,v' \in V_n} \cov(X_v, X_{v'} ) = o(n^2).
\end{align} 
Thanks to \eqref{mx} and \eqref{cx} by  using Chebyshev's inequality we get that 
\begin{align} \label{sx}
\pn \left( \frac{1}{n} \sum \limits_{v \in V_n} X_v \leq 2C \lambda^{1+ 2/ \psi} |\log \lambda |^{-1/\psi}  \right) =1 -o(1).
\end{align} 
Since the contact process on a finite ball in the P\'olya-point graph a.s. dies out, 
\begin{align*}
\lim \limits_{t \rightarrow \infty} \mathbb{P}(\kL_T(o,R)^c \cap \{\xi^{o}_t \neq \varnothing\}) =0.
\end{align*}
Hence for any $\varepsilon >0$, there exists $t_{\varepsilon}$, such that 
\begin{align} \label{vvq}
\mathbb{P}(\kL_T(o,R)^c \cap \{\xi^{o}_{t_{\varepsilon}} \neq \varnothing\}) \leq \varepsilon.
\end{align}
For any $v \in V_n$, define 
$$X_{v, \varepsilon} =1(\kL_n(v,R)^c \cap \{\xi^v_{t_{\varepsilon}} \neq \varnothing\}).$$
Then for $n$ large enough such that $t_n \geq t_{\varepsilon}$, we have 
\begin{align} \label{dtv}
1(\{\xi^v_{t_n} \neq \varnothing\}) \leq   X_v + X_{v, \varepsilon}.
\end{align}
It follows from Theorem \ref{t2.3} and \eqref{vvq}  that 
\begin{align*}
\lim \limits_{n \rightarrow \infty} \mathbb{E}_n(X_{u, \varepsilon}) = \mathbb{P}(\kL_T(o,R)^c \cap \{\xi^{o}_{t_{\varepsilon}} \neq \varnothing\}) \leq \varepsilon.
\end{align*}
By using this and Markov's inequality we get that for $n$ large enough, and for any $\eta >0$,
\begin{align} \label{bls}
\pn \left( \frac{1}{n} \sum \limits_{v \in V_n} X_{v, \varepsilon} > \eta  \right) \leq \frac{\sum _{v \in V_n}\en(X_{v, \varepsilon})}{n \eta} = \frac{\en(X_{u,\varepsilon})}{\eta} \leq \frac{2 \varepsilon}{\eta}.
\end{align}
By combining \eqref{sx}, \eqref{dtv} and \eqref{bls}, then letting $\varepsilon$ tend to $0$, we obtain that 
 \begin{align*} 
\pn \left( \frac{1}{n} \sum \limits_{v \in V_n} 1(\{\xi^v_{t_n} \neq \varnothing \}) \leq 3C \lambda^{1+ 2/ \psi} |\log \lambda |^{-1/\psi}  \right) =1 -o(1),
\end{align*}
which proves \eqref{tub}.  \hfill $\square$
\subsection{Proof of \eqref{tlb}}
This subsection is divided into three parts. In the first one, we will show that w.h.p. there are many vertices with large degree (larger than $\varkappa ^* \log n$). By using on the other hand that the diameter of the graph is  smaller than $b_1 \log n$, we can deduce that if one of these large degree vertices is infected, then the virus survives w.h.p.  for a time $\exp(c n/(\log n)^{1/ \psi})$, see Proposition \ref{pb}.  In the second part, we measure the  density of {\it potential} vertices which are promising for spreading the virus to some of these large degree vertices.  In the last part, we estimate the proportion of  potential vertices which really send the virus to large degree vertices, getting this way \eqref{tlb}.
\subsubsection{Lower bound on the extinction time}
Our aim in this part is to find large degree vertices as mentioned above. We then prove that if one of them is infected, the virus is likely to survive a long time. 
\begin{lem} \label{lm1} 
Let $\varkappa >0$ be given. Then there exists a positive constant $\bar{c} = \bar{c}(\varkappa)$, such that $\mathcal{A}_n$ holds w.h.p. with
 \begin{align*}
  \mathcal{A}_n= \{G_n\textrm{ contains $ \bar{c} n/(\log n)^{1/(1- \chi)}$  disjoint star graphs of size larger than } \varkappa \log n \}.
\end{align*} 
\end{lem}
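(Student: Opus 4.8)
The plan is to build the stars around the \emph{old} vertices $v_i$ with $i\le \delta N$ (these have large degree for a suitable small constant $\delta=\delta(\varkappa)$), using as leaves only \emph{late} vertices $v_k$ with $k>\delta N$, and to arrange disjointness by routing each late vertex through a single one of its $m$ backward half--edges. Precisely, set $N=\lfloor n/(\log n)^{1/(1-\chi)}\rfloor$, fix a small $\varepsilon>0$, and for $i\le\delta N$ put $\tilde L_i=\{k:\ \delta N<k\le n,\ U_{k,1}\in I_i\}$, letting $S_i$ be the star with center $v_i$ and leaf set $\{v_k:k\in\tilde L_i\}$. Since $U_{k,1}\in I_i$ forces an edge between $v_k$ and $v_i$, each $S_i$ is a genuine star subgraph of $G_n$; since every $U_{k,1}$ lies in at most one interval $I_j$ and all centers have index $\le\delta N<k$, the stars $(S_i)_{i\le\delta N}$ are pairwise vertex--disjoint. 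Thus it suffices to produce, w.h.p., at least $\bar c N$ indices $i$ with $|\tilde L_i|\ge\varkappa\log n$.

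The first step is to show that a single $S_i$ is large with probability bounded below. Conditionally on $(\psi_t)$, the variable $|\tilde L_i|=\sum_{\delta N<k\le n}{\bf 1}(U_{k,1}\in I_i)$ is a sum of independent Bernoulli variables with means $\varphi_i/S_{k-1}$, so $\en(|\tilde L_i|\mid(\psi_t))=\varphi_i\sum_{\delta N<k\le n}S_{k-1}^{-1}$. On the event $\kE_{\varepsilon}$ of Lemma \ref{l2}(ii) one has $S_{k-1}^{-1}\ge(1+\varepsilon)^{-1}(n/(k-1))^{\chi}$, and since $\chi<1$ and $\delta N=o(n)$, summing over $k$ gives $\sum_{\delta N<k\le n}S_{k-1}^{-1}\ge c_1 n$ for $n$ large. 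Still on $\kE_{\varepsilon}$, and using $\varphi_i=\psi_i S_i\ge(1-\varepsilon)\psi_i(i/n)^{\chi}$ for $i\ge K(\varepsilon)$, this yields $\en(|\tilde L_i|\mid(\psi_t))\ge c_2\,\psi_i\,i^{\chi}n^{1-\chi}$; if in addition $\psi_i\ge\theta/i$ (Lemma \ref{l2}(iv)), the mean is $\ge c_2\theta\,(n/i)^{1-\chi}\ge c_2\theta\,\delta^{-(1-\chi)}\log n$, which is $\ge 3\varkappa\log n$ once $\delta=\delta(\varkappa)$ is chosen small enough. A standard Chernoff lower--tail bound for sums of independent Bernoulli variables then gives, uniformly over such $i$, $\pn(|\tilde L_i|<\varkappa\log n\mid(\psi_t))\le n^{-c_3\varkappa}$.

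The second step is to count the good centers. Let $I^{\ast}=\{K(\varepsilon)\le i\le\delta N:\ \psi_i\ge\theta/i\}$; by Lemma \ref{l2}(iv) the indicators ${\bf 1}(\psi_i\ge\theta/i)$ are independent with mean $\ge 2\theta$, so \eqref{ldv} gives $|I^{\ast}|\ge\theta\delta N$ w.h.p. Working on the w.h.p. event $\kE_{\varepsilon}\cap\{|I^{\ast}|\ge\theta\delta N\}$ and conditioning on $(\psi_t)$, the previous step plus Markov's inequality give $\#\{i\in I^{\ast}:\ |\tilde L_i|<\varkappa\log n\}\le|I^{\ast}|/2$ with probability $1-O(n^{-c_3\varkappa})$; hence w.h.p. at least $|I^{\ast}|/2\ge\theta\delta N/2$ of the disjoint stars satisfy $|S_i|=1+|\tilde L_i|>\varkappa\log n$, which is $\mathcal A_n$ with $\bar c=\theta\,\delta(\varkappa)/4$ (using $N\ge\tfrac12 n/(\log n)^{1/(1-\chi)}$). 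Since $\bar c$ can be taken independent of $\varepsilon$, letting $\varepsilon\downarrow 0$ finishes the argument. The real obstacle is the tension between vertex--disjointness and keeping a number of stars linear in $N$: a direct second moment estimate shows that the number of vertices shared by two candidate stars is itself of order $N$, so one cannot simply discard overlapping stars; the single--half--edge routing through $U_{k,1}$ removes the overlaps at the cost of only a constant factor $m$ in the size estimate, after which everything reduces to the large--deviation bookkeeping of Lemma \ref{l2} and a Chernoff bound.
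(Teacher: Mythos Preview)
Your proof is correct and takes a genuinely different route from the paper's. Both arguments pick centers among the early vertices (indices of order $n/(\log n)^{1/(1-\chi)}$) and leaves among the late ones, and both rely on $\kE_\varepsilon$ together with $\psi_i\ge\theta/i$ to ensure each center has $\gtrsim\log n$ late neighbors. The difference lies in how disjointness of the stars is enforced. The paper builds the stars \emph{sequentially}: it lets $B_1$ be all late neighbors of the first center, removes them from the pool, lets $B_2$ be all late neighbors of the second center still in the pool, and so on; this requires proving \emph{two-sided} bounds $c_1 a_n^{1-\chi}\le |B_s|\le C_1 a_n^{1-\chi}$ at every step so that the pool of late vertices never shrinks below $n/4$. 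You instead exploit the structure of the P\'olya-urn representation directly: by declaring $v_k$ a leaf of $v_i$ only when the \emph{first} uniform $U_{k,1}$ lands in $I_i$, each late vertex is routed to at most one center automatically, so disjointness is free and no upper bound on the star sizes is needed. The price is a harmless factor $m$ in the mean (you use one half-edge out of $m$), after which a single Chernoff bound and Markov's inequality finish the count. Your approach is shorter and avoids the iterative conditioning; the paper's approach is perhaps more robust in that it would adapt to models where such a canonical ``first half-edge'' is unavailable.
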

\begin{proof}
 Let $\varepsilon  \in (0,1/3)$ be given, and let  us recall   the event $\kE_{\varepsilon}$ and the constant $K=K(\varepsilon)$ defined in  Lemma \ref{l2}: 
$$\mathcal{E}_{\varepsilon}= \Big \{ \Big| \frac{S_j}{S_k}  -  \left(\frac{j}{k}\right)^{\chi} \Big| \leq \varepsilon (j/k)^{\chi} \quad \forall \, K(\varepsilon) \leq j \leq  k \leq n\Big \}.$$ 
  Set $a_n = (M \log n)^{1/(1- \chi)}$, with $M$ to be chosen later. Denote by 
$$A=\{v_i : i  \in [n/a_n, 2n/a_n] \textrm{ and } \psi_i \in (\theta/i,  \mu/i)\}$$
and 
$$\mathcal{J}_0= \mathcal{E}_{\varepsilon} \cap \{|A| \geq \theta n / a_n\},$$
with $\theta$, $\mu$ as in Lemma \ref{l2}. Recall that the events $\{\psi_i \in (\theta/i,  \mu/i)\}$ are independent and have probability larger than $2\theta$. Therefore \eqref{ldv} implies that w.h.p. $|A| \geq \theta n / a_n$. Hence by Lemma \ref{l2} (ii), we have for $n$ large enough
$$\pn(\mathcal{J}_0) \geq 1- 2 \varepsilon.$$
We now suppose that $\mathcal{J}_0$ occurs. Then,  the elements of $A$ can be written   as $ \{ v_{j_1},...,v_{j_{\ell}}\}$ with $ \ell \in [ \theta n/a_n, n/a_n]$.
Then define
$$A_1= \{v_j : n/2\leq j \leq n\} .$$
We will show that all vertices in $A$ have a large number of neighbors in $A_1$.   Indeed,  it follows from \eqref{ecp} and Lemma \ref{l2} (ii) that for $K(\varepsilon) \leq j < k$,
\[\pn(v_j \sim v_k \mid (\varphi_t), \kE_{\varepsilon}) \asymp \frac{\psi_j S_j}{S_{k-1}} \asymp \psi_j \left(\frac{j }{k-1}\right)^{\varkappa}  .\]
Hence,  for all $v_j \in A$ and $v_k \in A_1$,
\begin{equation} \label{eq5}
 \pn(v_j \sim v_k \mid \kJ_0) \asymp   \frac{ a_n ^{1- \chi}}{n}.
\end{equation}
Conditionally on $(\psi_j)$,  the events $\{\{v_{j_1} \sim v_k\}\}_{k \in A_1}$ are independent. Hence thanks to \eqref{ldv} we get that there are positive constants $\theta_1$, $c,$ and $ C$,  such that
\begin{align*}
\pn \left(  c a_n ^{1- \chi}\leq \sum \limits_{v_k \in A_1} 1(v_{j_1} \sim v_{k } ) \leq  C a_n ^{1- \chi} \mid \kE_{\varepsilon} \right) \geq 1- \exp(-  \theta_1  a_n ^{1- \chi}),
\end{align*}
or equivalently
\begin{equation} \label{eq6}
\pn(\mathcal{J}_1 \mid \mathcal{J}_0 ) \geq 1 - \exp(-  \theta_1  a_n ^{1- \chi}), 
\end{equation} 
where 
$$\mathcal{J}_1 = \{c a_n ^{1- \chi} \leq |B_1| \leq C a_n ^{1- \chi}\}$$
and 
$$B_1 = \{v_k \in A_1 :  v_{j_1} \sim v_k\}.$$
Note that in this proof, the value of the constant $\theta_1$  may change from line to line. Now let us consider  
$$A_2 = A_1 \setminus B_1 \quad \textrm{ and } \quad B_2 = \{v_k \in A_2 :  v_{j_2} \sim v_k\}.$$
We notice that on $\mathcal{J}_1 \cap \mathcal{E}$, the cardinality of $A_2 $ is larger than $  n/2 - C a_n ^{1- \chi} \geq n/4.$ Thus, similarly to \eqref{eq6}, there exist positive constants $c_1 $ and $C_1 $, such that 
\begin{equation} \label{j21}
\pn (\mathcal{J}_2 \mid \mathcal{J}_1 \cap \mathcal{J}_0 ) \geq 1 - \exp(- \theta_1  a_n ^{1- \chi}),
\end{equation}
where 
$$\mathcal{J}_2 = \{c_1 a_n ^{1- \chi}  \leq |B_2| \leq C_1 a_n ^{1- \chi}\}.$$
Here we can also assume that $c_1 \leq c$ and $C_1 \geq C$. Likewise for all $2\leq s \leq \ell$, define recursively
$$A_s = A_{s-1} \setminus B_{s-1}, \quad B_s = \{v_k \in A_s : v_{j_s} \sim v_k\},$$
$$\mathcal{J}_s = \{c_1 a_n ^{1- \chi}\leq |B_s| \leq C_1 a_n ^{1- \chi}\}.$$
On $ \underset{i=0}{ \overset{s-1}{\cap}}  \mathcal{J}_i$, we have $|A_s| \geq n/2 - s C_1 a_n ^{1- \chi} \geq n/4$. Therefore, similarly to \eqref{j21} 
$$\pn \left(\mathcal{J}_s \mid  \underset{i=0}{ \overset{s-1}{\cap}}  \mathcal{J}_i \right) \geq 1- \exp- (\theta_1  a_n ^{1- \chi}).$$
Hence 
$$\pn \left(\underset{i=1}{ \overset{\ell}{\cap}}  \mathcal{J}_i \mid \mathcal{J}_0 \right) \geq 1 -  n \exp (- \theta_1  a_n ^{1- \chi})/a_n .$$
Taking $M$ large enough such that $c_1 a_n ^{1- \chi} \geq \varkappa \log n$ and $n \exp (- \theta_1  a_n ^{1- \chi}) \leq 1$ yields that
\begin{align} \label{cbs}
\pn ( |B_s| \geq \varkappa \log n \, \, \forall \, 1 \leq s \leq \ell  \mid \mathcal{J}_0) \geq 1 -a_n^{-1}. 
\end{align} 
Moreover, by  definition  $B_s \cap B_t = \varnothing $ for all $s \neq t$. Hence, all vertices in $A$ have more than $\varkappa \log n$ distinct neighbors. Finally, take $\bar{c}$  such that $\bar{c}n/(\log n)^{1/(1- \chi)} \leq \theta n/ a_n$, for instance $\bar{c} \leq \theta M^{-1/(1- \chi)}$. In conclusion, we have shown that for any given $\varepsilon \in (0,1/3)$,
$$\pn(\mathcal{A}_n) \geq 1- 2\varepsilon - a_n^{-1} \geq 1- 3 \varepsilon,$$
for $n$ large enough. Since this holds for any $\varepsilon >0$, the result follows.
\end{proof}
\noindent   To determine the constant $\varkappa$ in the definition of $\kA_n$, we first recall that  $$\pn(\mathcal{B}_n) = 1-o(1),$$
where
 $$\mathcal{B}_n= \{d(G_n) \leq b_1 \log n\}.$$
 Hence to apply Lemma \ref{lst} to the large degree vertices exhibited in the previous lemma, we need
$$\varkappa \log n \geq \frac{7}{c^*} \frac{1}{\lambda ^2} \log \left(\frac{1}{\lambda} \right) b_1 \log n.$$
Moreover, in \eqref{cslk}, we  will use that $\varkappa \geq 3/(c^* \lambda^2)$. So we let
\begin{equation} \label{c}
 \varkappa^* = \max \Big \{\frac{7}{ c^* } \frac{1}{\lambda ^2} \log \left(\frac{1}{\lambda} \right) b_1, \frac{3}{ c^* \lambda^2} \Big \}.
\end{equation}
 Then we let $\bar{c}^*= \bar{c}^*(\varkappa^*) $ and $\kA_n$ be defined accordingly as in Lemma \ref{lm1}.

\vspace{0,2 cm}
 A set of vertices $V = \{w_1,\ldots, w_k\} \subset V_n$ is called {\bf good} if $|S(w_i) \setminus \cup_{j\neq i} S(w_j)| \geq \varkappa ^* \log n$ for all $1 \leq i \leq k$, where $S(v)$ denotes the star graph formed by $v$ and its neighbors.
 
   Let  $V^*_n$ be  a maximal good set i.e. $|V^*_n| = \max \{|V|: V \subset V_n \textrm{ is good}\}$. 
    \begin{prop} \label{pb}  There exists a positive constant $c$, such that 
$$\pn \left(\xi_{T_n} \neq \varnothing  \mid \xi_0 \cap V_n^{*} \neq \varnothing  \right) = 1- o(1),$$ 
where $T_n=\exp(c \lambda^2 n/(\log n)^{1/\psi} )$. 
\end{prop}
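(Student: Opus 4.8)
\emph{Strategy and set-up.} The idea is that once the infection reaches $V_n^*$ it organises itself into a self-sustaining relay among the large-degree vertices of $G_n$, and so cannot die before time $T_n$; this is the analogue here of Proposition~1 of \cite{CD}. I would first condition on the event that $\kA_n\cap\kB_n$ holds and that all degrees are $\le n^{\rho}$ for a fixed $\rho<1$, which has probability $1-o(1)$ by Lemma~\ref{lm1} (with $\varkappa=\varkappa^*$), Lemma~\ref{lmdi} and the standard bound $\max_v\deg_{G_n}(v)=n^{\chi+o(1)}$ for this model. On this event, let $\Sigma_n$ be the set of centres of the $\bar{c}^*n/(\log n)^{1/(1-\chi)}$ disjoint stars provided by $\kA_n$, and for $v\in\Sigma_n$ write $\bar S_v$ for the corresponding star, so the $\bar S_v$ are pairwise vertex-disjoint and all of size $\asymp\varkappa^*\log n$; moreover $d_{G_n}(v,w)\le b_1\log n$ for all $v,w$. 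By the choice of $\varkappa^*$ in \eqref{c}, the degree hypothesis of Lemma~\ref{lst} holds for every ordered pair of vertices of $V_n^*$ (in particular for every pair involving a vertex of $\Sigma_n$), and $\lambda^2\varkappa^*\log n\ge 64e^2$ for $n$ large, so Lemma~\ref{lb}(iii) applies. Here and below, ``lit'' for $v$ refers to a fixed star at $v$ (for $v\in\Sigma_n$, the star $\bar S_v$), and Lemmas~\ref{lb} and \ref{lst} are used in that form.

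\emph{Ignition.} Pick $w\in\xi_0\cap V_n^*$; since $w$ has at least $\varkappa^*\log n$ neighbours, by monotonicity of the contact process and Lemma~\ref{lb}(ii) it becomes lit at some a.s.\ finite time $\tau_0$ with probability $1-o(1)$ (a geometric-trials argument based on Lemma~\ref{lb}(i) even gives $\tau_0\le(\log n)^2$ w.h.p.). Applying Lemma~\ref{lst} to $(w,v^*)$ for a fixed $v^*\in\Sigma_n$, and using $\deg(w)\le n^{\rho}=o(n/(\log n)^{1/\psi})$ so that $\exp(c^*\lambda^2\deg(w))=o(T_n)$, one gets that $v^*$ is lit by some time $\tau_1=o(T_n)$ with probability $1-o(1)$. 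By the strong Markov property it then suffices to show: if some vertex of $\Sigma_n$ is lit at time $0$, then $\xi_{T_n}\ne\varnothing$ with probability $1-o(1)$.

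\emph{Maintenance.} Fix $\Delta=\exp(\Theta(\lambda^2\varkappa^*\log n))$, a fixed power of $n$ chosen larger than $2\exp(c^*\lambda^2|\bar S_v|)$ for every $v\in\Sigma_n$ and than the re-ignition times of Lemma~\ref{lst} between vertices of $\Sigma_n$; cut $[0,T_n]$ into $T_n/\Delta$ rounds. The claim, proved by induction over rounds, is that outside an event of probability $o(1)$ at least one vertex of $\Sigma_n$ is lit at the start of every round. In a round: a vertex lit at its start re-ignites, via Lemma~\ref{lst}, the vertices of $\Sigma_n$ within the round; and each $v\in\Sigma_n$ that is lit during the round stays lit through a subinterval of the next round, via Lemma~\ref{lb}(iii), except with probability at most $2\exp(-c^*\lambda^2\varkappa^*\log n)$. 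Since the $\bar S_v$ are vertex-disjoint, these ``loss of charge'' events are independent in the Harris graphical representation; hence a round fails only if \emph{all} $|\Sigma_n|=\bar{c}^*n/(\log n)^{1/(1-\chi)}$ of the $\bar S_v$ lose charge at once, which has probability at most $\exp(-\tfrac12 c^*\lambda^2 D)$, where $D:=|\Sigma_n|\,\varkappa^*\log n\asymp n/(\log n)^{1/\psi}$ (using $1/(1-\chi)-1=1/\psi$ and absorbing the $\lambda$-dependent factor $\bar{c}^*\varkappa^*$). Choosing the constant $c$ in $T_n=\exp(c\lambda^2 n/(\log n)^{1/\psi})$ small enough that $\log(T_n/\Delta)\le\tfrac14 c^*\lambda^2 D$ for large $n$, a union bound over the $T_n/\Delta$ rounds makes the total failure probability $o(1)$; together with the ignition step this proves the proposition.

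\emph{Main obstacle.} The substance lies entirely in making the maintenance step rigorous: one must schedule the ``re-ignite, then stay lit inside the round'' cycle consistently --- this is where it is essential that the stars of Lemma~\ref{lm1} have comparable size $\asymp\varkappa^*\log n$, so that the re-ignition time $\exp(c^*\lambda^2|\bar S_v|)$ of Lemma~\ref{lst} is dominated by a single $\Delta$ --- and one must arrange that the re-ignition failures together with the ``loss of charge'' failures really accumulate only when every $\bar S_v$ is simultaneously dark, which requires controlling the correlations through the Harris construction on the $\bar S_v$ and their connecting paths. This is exactly the mechanism of Proposition~1 of \cite{CD}, here adapted to the preferential attachment graph.
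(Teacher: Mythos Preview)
Your outline matches the paper's: condition on $\kA_n\cap\kB_n$, ignite from the given vertex of $V_n^*$, spread to the relay of large stars, and invoke Proposition~1 of \cite{CD} for the long-time survival. The paper works directly on $V_n^*$ rather than passing to a separate $\Sigma_n$: by the definition of a good set each $w\in V_n^*$ carries a subset $D(w)$ of exactly $\varkappa^*\log n$ neighbours with the $D(w)$'s pairwise disjoint, and ``infested'' is defined relative to $D(w)$. All relevant stars then have the same size, the transfer time in Lemma~\ref{lst} is a fixed power of $n$, and no maximal-degree hypothesis is needed.

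The one substantive difference is how the hand-off to \cite{CD} is arranged. The paper does it in one shot: by Lemmas~\ref{lb}(iii) and~\ref{lst} and a union bound over at most $n$ targets, the first infested vertex makes \emph{every} $w\in V_n^*$ infested with probability $\ge 1-n^{-1}$; then \cite{CD}, whose statement (as quoted in the paper) requires starting from $k$ infested vertices, gives $I_{n,s_k}\ge k/2$ at time $s_k\asymp T_n$ directly with $k=|V_n^*|$. Your round-based sketch instead re-ignites all of $\Sigma_n$ from a single seed in each round and bounds the round-failure probability by the ``all stars lose charge'' product; but that bound tacitly assumes every re-ignition succeeded, and since the per-target re-ignition error from Lemma~\ref{lst} is only a fixed negative power of $n$, it cannot be union-bounded over the exponentially many rounds in $[0,T_n]$. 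You correctly flag exactly this in your ``Main obstacle'' paragraph and defer to \cite{CD} --- so your proposal is sound, but the per-round computation preceding it is not the operative argument, and the clean route is the paper's ``get everyone infested once, then cite \cite{CD}''.
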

\begin{proof}
Thanks to Lemma \ref{lmdi} and \ref{lm1}, we can assume that $d(G_n) \leq b_1 \log n$ and 
 $|V^*_n| \geq \bar{c}^* n/ (\log n)^{1/(1-\chi)}$.  Assume also that at time $0$ a vertex in $V^*_n$, say $v$,  is infected.

 Due to the definition of $V^*_n$,  for any $ w \in V^*_n$, we can select from the set of $w$'s neighbors a subset $D(w)$ of size $\varkappa^* \log n$, such that $D(w) \cap D(w')=\varnothing$ for all $w \neq w'$. 

\noindent We say that a vertex $w$ in $V^*_n$ is {\it infested} at some time $t$ if the proportion of infected sites in $D(w)$ at time $t$ is larger than  $\lambda/(16e)$ (the term is taken from \cite{MMVY}).  

  It follows from Lemma \ref{lb} (ii) that $v$ becomes infested  with probability tending to $1$, as $n \rightarrow \infty$. Using  Lemma \ref{lb} (iii) and \ref{lst} (note that $|D(w)| \geq (7/(c^* \lambda^2))|\log \lambda| d(w,w')$), we deduce that for any $t \geq 0$ and $w \in V_n^*$,
  \begin{align*}
  &\pn( w \textrm{ is infested at } t+  2 \exp(c^* \lambda^2 \varkappa^* \log n) \mid v \textrm{ is infested at }t) \\ & \geq  1- 4 \exp (-c^* \lambda ^2 \varkappa^* \log n ).
  \end{align*}
Therefore  
\begin{align}
& \pn(  \textrm{All vertices in $V^*_n$ are infested at } t+  2 \exp(c^* \lambda^2 \varkappa^* \log n) \mid v \textrm{ is infested at }t) \notag \\ & \geq  1- 4n \exp (-c^* \lambda ^2 \varkappa^* \log n )  \notag\\
& \geq 1-n^{-1}, \label{cslk}
\end{align}
where  we have used that $c^* \lambda ^2 \varkappa^*  \geq 3$.  Now if all vertices in $V^*_n$ are infested at the same time, then  the proof of Proposition 1 in \cite{CD} shows that the virus survives a time exponential in  $\sum _{v \in V^*_n} \deg(v)$. More precisely, let $I_{n,t}$ be the number of infested vertices in $V^*_n$ at time $t$. Then there is a positive constant $\eta$, such that for all $k \leq |V^*_n|$,
$$\pn \left(I_{n, s_k} \geq k/2 \mid I_{n,0} \geq k   \right) \geq 1- s_k^{-1},$$
where $s_k = \exp(\eta \lambda^2 k \varkappa^* \log n)$. The result  follows by taking $k= \big[\bar{c}^* n/(\log n)^{1/(1-\chi)} \big]$.
\end{proof}
\subsubsection{Density of potential vertices} In this part we will estimate the proportion of {\it potential} sites from where the virus can be sent with positive probability to a vertex at distance quite small (of order $(\log \log n)^2$) and with large degree (larger than $ \varkappa^* \log n$). 

This proportion approximates the probability that there is an infection path from the uniformly chosen vertex $u$ to a vertex with degree larger than $ \varkappa ^* \log  n$.  To bound from below this probability, we use the same ideas as in  Lemma \ref{ehd}. Indeed, we will find a sequence of vertices starting from a neighbor of $u$ and ending at a large degree vertex, satisfying the hypothesis of Lemma \ref{lst} for spreading the virus from $u$ to the ending vertex (see Lemma \ref{ldeh}).

Here are just some comments on the order of magnitude above. First, if a vertex with degree larger than $\varkappa^* \log  n$ is infected, then w.h.p. it will infect a site in $V^*_n$ , and  then  we can conclude with Proposition $\ref{pb}$.  Secondly,  $(\log \log n)^2$  is the distance from a potential vertex to a large degree  vertex  and is  much smaller than the typical distance between two different potential vertices. Hence the propagation of the virus from these potential vertices to their closest large degree vertex are approximately independent events.

\vspace{0,2 cm}
Set
$$R_n= \left[(\log \log n)^2 \right].$$
For  $w \in V_n$,  define $k_0(w)$  by $w= v_{k_0(w)}$, and for $i\geq 1$  let  $k_i(w)$ be chosen arbitrarily from the indices such that  $v_{k_i(w)} $ receives an edge emanating from $ v_{k_{i-1}(w)}$.  We define also 
 $$\mathcal{H}_n (w)=\{ k_0 (w) \geq n/\log n\} \cap \{  k_{i+1} (w) \geq k_i(w) / \log k_i (w)  \geq n^{1/2} \, \,\forall \, 0 \leq i \leq R_n\}. $$
\begin{lem} \label{lbn}
 There is  a positive constant  $\theta_0$, such that for all $\theta \leq \theta_0$, for all $\varepsilon  \in (0, 1/2)$, and for any vertex $w$,  we have
 \begin{itemize}
 \item[(i)] for all $i \leq R_n$
 \[ \pn \left( \max \limits_{ v \in B_{G_n}(w,i)} \deg(v) \geq \theta e^{\theta i} (n/k_{0}(w))^{1-\chi} \mid \kE_{\varepsilon} \cap \kH_n(w)\right) \geq 1 - e^{-\theta i},\]
 \item[(ii)] $ \pn \left( \kH_n(w) \mid  k_0(w) \geq n/ \log n, \kE_{\varepsilon}\right) =1 -o(1/ \log \log n),$
 \end{itemize}
 with $\kE_{\varepsilon}$ as in Lemma \ref{l2} (ii).
\end{lem}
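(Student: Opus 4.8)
The plan is to establish the two estimates of Lemma \ref{lbn} by separate inductive arguments on the scale $i$, both driven by the recursive construction of the chain $(k_i(w))$ together with the basic edge-probability estimate \eqref{ecp} and the moment bounds of Lemma \ref{l2}.

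First I would handle the degree-growth statement. The key observation is that, conditionally on the event that $v_{k_i(w)}$ is a neighbor of $v_{k_{i-1}(w)}$ in the chain, $v_{k_i(w)}$ has, like in Lemma \ref{lm1}, many neighbors among the ``late'' vertices $\{v_j : n/2 \le j \le n\}$. Indeed, on $\kE_{\varepsilon}$ and $\kH_n(w)$ we have $k_i(w) \ge n^{1/2}$, so by \eqref{ecp}, Lemma \ref{l2} (ii) and the pair $\psi_{k_i}$ being of order $1/k_i$ on a positive-probability event, the number of neighbors of $v_{k_i(w)}$ among these late vertices is with high conditional probability at least of order $(n/k_i(w))^{1-\chi}$. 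Since $v_{k_i(w)} \in B_{G_n}(w,i)$ when $v_{k_i(w)}$ lies on the chain, its degree is then at least of that order. Using $k_i(w) \le k_{i-1}(w)/\log k_{i-1}(w) \le \ldots \le k_0(w) \prod (\log)^{-1}$, which on $\kH_n(w)$ gives $k_i(w) \le k_0(w) e^{-\Theta(i)}$ because each factor $\log k_j(w) \ge \log n^{1/2}$ is bounded below by a growing quantity — more carefully, one only needs $\log k_j(w) \ge e^{c}$ for a suitable constant, so $(n/k_i(w))^{1-\chi} \ge e^{\theta i} (n/k_0(w))^{1-\chi}$ for $\theta$ small. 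The failure probability at each scale is exponentially small in $(n/k_i(w))^{1-\chi} \ge (\log n)^{\Theta(1)}$ by the large-deviation bound \eqref{ldv}, hence certainly at most $e^{-\theta i}$ for $i \le R_n = (\log\log n)^2$ and $n$ large; taking $\theta$ small enough absorbs the constants. One must condition appropriately and keep the $\sigma$-field generated by the $(\psi_j)$ fixed so that the edge events to the late vertices are independent, exactly as in the proof of Lemma \ref{lm1}.

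Second, for the statement that $\kH_n(w)$ has conditional probability $1-o(1/\log\log n)$, I would again proceed step by step along the chain. Given $k_i(w)$ with $k_i(w) \ge n^{1/2}$, I choose $v_{k_{i+1}(w)}$ to be the smallest-index neighbor of $v_{k_i(w)}$; by \eqref{ecp} and Lemma \ref{l2}, the conditional probability that $v_{k_i(w)}$ has \emph{no} neighbor with index in the window $[k_i(w)/\log k_i(w), \, k_i(w))$ is small — of order $\exp(-c)$ for some constant, so we actually want the probability of at least one neighbor in a slightly larger range, or we iterate; the cleanest route is to show $\pn(k_{i+1}(w) \ge k_i(w)/\log k_i(w) \mid \kF_i, \kE_\varepsilon)$ is bounded below by a constant very close to $1$, using that the expected number of neighbors of $v_{k_i}$ with index at least $k_i/\log k_i$ is of order $\int_{k_i/\log k_i}^{k_i} (s/k_i)^{-\chi} s^{-1} ds \asymp (\log k_i)^{\chi}-1$, which is large, combined with a second-moment or large-deviation estimate. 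Summing the $R_n+1 = (\log\log n)^2+1$ small failure probabilities, each of which I can make smaller than $(\log\log n)^{-3}$ by a judicious choice of the window and $n$ large, gives the claimed $o(1/\log\log n)$ bound. One also needs to check that the chain stays above $n^{1/2}$, which holds because starting from $k_0(w) \ge n/\log n$ and dividing by at most $\log$ at each of $R_n$ steps reduces the index by a factor at most $(\log n)^{R_n} = \exp((\log\log n)^3) = n^{o(1)}$, so $k_{R_n}(w) \ge n^{1-o(1)} \ge n^{1/2}$.

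The main obstacle I expect is the second part: controlling the chain so that each index ratio $k_{i+1}(w)/k_i(w)$ is \emph{not too small} with probability extremely close to $1$ — close enough that the union bound over $R_n$ steps still beats $1/\log\log n$. A single crude application of \eqref{ecp} only yields a constant-order lower bound for the probability that a neighbor falls in a given dyadic-type window, which is far from enough; the resolution is to exploit that $v_{k_i(w)}$ typically has \emph{many} neighbors spread over a wide range of indices (not just $m$, because $v_{k_i}$ has accumulated degree of order $(n/k_i)^{1-\chi} \gg 1$ on the good event from the first part), so the probability that \emph{all} of them avoid the window $[k_i/\log k_i, k_i)$ decays like $\exp(-c(\log k_i)^{\chi})$ or faster, which is super-polynomially small and trivially summable. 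Making this rigorous requires combining the two parts of the lemma — one uses the degree lower bound to get the index-ratio control — so the induction should really be run jointly on both events, carrying the conditioning on $\kE_\varepsilon$ and the $(\psi_j)$ throughout. The rest is routine given Lemma \ref{l2} and the large-deviation tool \eqref{ldv}.
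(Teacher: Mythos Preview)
Your proposal contains a basic misreading of the event $\kH_n(w)$. You write that on $\kH_n(w)$ one has $k_i(w)\le k_{i-1}(w)/\log k_{i-1}(w)$, and from this conclude $k_i(w)\le k_0(w)e^{-\Theta(i)}$. But the defining inequality runs the other way: $\kH_n(w)$ asserts $k_{i+1}(w)\ge k_i(w)/\log k_i(w)$, i.e.\ the chain does \emph{not} drop too fast. It gives no upper bound on the decay at all --- nothing in $\kH_n(w)$ excludes $k_{i+1}(w)=k_i(w)-1$ for every $i$. So the geometric shrinking $k_{[i/2]}(w)\le e^{-ci}k_0(w)$ has to be proved separately, via a martingale/Azuma argument (each step halves the index with probability at least a constant $p>0$, as in Lemma \ref{lmdi}). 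A second missing step: the degree bound $\deg(v_k)\gtrsim (n/k)^{1-\chi}$ holds only on $\{\psi_k\ge c/k\}$, an event of \emph{constant} probability, not probability close to one. You therefore cannot say ``the failure probability at each scale is exponentially small''; instead one must show that \emph{some} $j\in(i/2,i)$ along the chain has $\psi_{k_j(w)}\ge c/k_j(w)$, again with probability $\ge 1-e^{-c'i}$. The paper's proof is built around exactly these two claims together with the degree estimate you do mention.

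For the second assertion you have made the problem much harder than it is. The chain follows one of the $m$ \emph{outgoing} edges of $v_{k_i(w)}$ (to smaller indices), and the target of a single such edge is uniform on $[0,S_{k_i(w)-1}]$. Hence on $\kE_\varepsilon$,
\[
\pn\bigl(k_{i+1}(w)\le k_i(w)/\log k_i(w)\mid k_i(w),(\varphi_t)\bigr)=\frac{S_{[k_i(w)/\log k_i(w)]}}{S_{k_i(w)-1}}\lesssim (\log k_i(w))^{-\chi}\lesssim (\log n)^{-\chi/2},
\]
since $k_i(w)\ge n^{1/2}$. A union bound over $R_n+1=O((\log\log n)^2)$ steps already gives failure probability $o(1/\log\log n)$. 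There is no need to appeal to the accumulated degree of $v_{k_i(w)}$; those extra neighbours have \emph{larger} index and are irrelevant to a decreasing chain. The joint induction you propose is unnecessary.
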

\begin{proof} We first prove (ii).  It follows from the construction of  $G_n$ and Lemma \ref{l2} (ii) that if $k_i(w)/ \log k_i(w) \geq K(\varepsilon)$, then  
\begin{align*}
\pn(k_{i+1} (w) \leq k_i (w)/ \log k_i (w) \mid \mathcal{E}_{\varepsilon}, k_i(w), (\varphi_t)) = \frac{S_{[k_i(w)/ \log k_i(w)]}}{S_{k_i(w)-1}} \leq (1+\varepsilon) \left(\frac{1 }{\log k_i(w)}\right)^{\chi} . 
\end{align*}
Hence  for all $i \leq R_n$,
\begin{eqnarray*}
 & &\pn \left(k_{i+1} (w)  \geq n/(\log n)^{i+2} \mid \mathcal{E}_{\varepsilon}, k_i (w) \geq n/ (\log n)^{i+1} \right) \\
  & \geq & \pn \left(k_{i+1} (w)  \geq k_i(w)/ \log k_i(w)  \mid \mathcal{E}_{\varepsilon}, k_i (w) \geq n/ (\log n)^{i+1} \right) \\
  & = & 1- o((\log n)^{-\chi /2}).
\end{eqnarray*}
Therefore
\begin{eqnarray*}
 & &\pn \left(k_{i+1} (w)  \geq k_i(w)/ \log k_i(w) \geq n/(\log n)^{i+2} \,\, \forall \, i \leq R_n \mid \mathcal{E}_{\varepsilon}, k_0 (w) \geq n/ \log n \right) \\
  & = & 1-  o(R_n (\log n)^{-\chi /2}) =1 - o(1/ \log \log n).
\end{eqnarray*}
This implies (ii), since for all $i\leq R_n$
\[n/(\log n)^{i+2} \geq \sqrt{n}.\]
We now prove (i). First, we claim that there is a positive constant $c_0$, such that for any $c<c_0$, there exists $c'=c'(c)>0$, such that for all $i \leq R_n$
\begin{itemize}
\item[(a)] $\pn\left(k_{[i/2]} (w) \leq e^{-ci} k_0(w) \mid \mathcal{E}_{\varepsilon} \cap \mathcal{H}_n(w) \right) \geq 1 - e^{- c'i}$, \\
\item[(b)] $\pn\left( \exists j \in (i/2, i): \psi_{k_j(w)} \geq c/ k_j(w) \mid \mathcal{E}_{\varepsilon} \cap \mathcal{H}_n(w) \right) \geq 1 - e^{- c'i}$, \\
\item[(c)] $\pn(\deg(v_k) \geq c' (n/k)^{1- \chi} \mid \psi_{k} \geq c/ k, \mathcal{E}_{\varepsilon}  ) \geq 1 - \exp(- c' (n/k)^{1- \chi})$,  for any $v_k \in V_n$. 
\\
\end{itemize}
 From these claims  we can deduce the result. Indeed, (a) and (b) imply that with probability larger than $1- 2 \exp(-c' i)$ there is an integer $j \in (i/2, i)$ such that $k_{j} (w) \leq e^{-ci} k_0(w)$ and $\psi_{k_j(w)} \geq c/ k_j(w)$. Then (c) gives that $\deg(v_{k_j(w)}) \geq c' e^{c(1-\chi) i}(n/k_0(w))^{1- \chi}$ with probability larger than $1- \exp(-c'e^{c(1-\chi) i})$. Hence (ii)  follows by taking $\theta$ small enough.

To prove (a), similarly to Lemma \ref{ltyd}, we consider 
 $$X_j(w)=1(\{k_j(w) \leq k_{j-1}(w)/2\}) \textrm{ and } \kF_j(w) = \sigma (k_t(w): t \leq j) \vee \sigma((\varphi_t)).$$
On $\kH_n(w)$, we have $ K(\varepsilon) \leq \sqrt{n} \leq k_j (w)$ for all $j \leq R_n$. Then by using the same argument as in Lemma \ref{ltyd} we obtain that on $\kH_n (w)\cap \kE_{\varepsilon}$,
 \begin{align} \label{xjw}
 \en(X_j(w) \mid \kF_{j-1}(w)) \geq \frac{S_{[k_j(w)/2]} - S_{[k_j(w)/\log k_j(w)]}}{S_{k_j(w)-1}} \geq p
 \end{align}
 and 
  \begin{align} \label{tdeh}
 \pn \left( \sum \limits_{j=1}^{[i/2]}  X_j (w)\geq ip/4 \right) \geq 1 - 2\exp(-ip^2/16),
 \end{align}
 for some constant $p>0$. Since  $k_{[i/2]}(w) \leq 2^{-ip/4} k_0(w)$ as soon as $\sum _{j=1}^{[i/2]}  X_j(w) \geq ip/4$, the claim (a)  follows from \eqref{tdeh}. 
 
 We now prove (b). Let $\theta$ be the constant as in Lemma \ref{l2} (v). Fix some $j \in (i/2, i)$ and set    $k=k_j(w)-1$ and $ \ell = [k_j(w)/ \log k_j(w)]$. Then we have 
 \begin{eqnarray} \label{kjj}
 \pn \left(  \psi_{k_{j+1}(w)} \geq \theta/ k_{j+1}(w) \mid k, \ell \right) &=& \en \left( \frac{1}{S_{k}-S_{\ell}} \sum \limits_{t=\ell+1}^{k} \varphi_t 1( \psi_t \geq \theta/t)  \mid k, \ell \right) \notag \\
 & \geq & \en \left( \frac{1}{S_{k}} \sum \limits_{t=\ell+1}^{k} \varphi_t 1( \psi_t \geq \theta/t)  \mid k, \ell \right)
 \end{eqnarray}
  On $\kH_n(w) \cap \kE_{\varepsilon}$, we have $k \geq k_{j+1}(w) \geq \ell \geq \sqrt{n} \geq K(\varepsilon)$. Hence, using Lemma \ref{l2} (ii) and the fact that $S_n=1$,  we get  on $\kH_n(w) \cap \kE_{\varepsilon}$,
  \begin{eqnarray} \label{skch}
  |S_k - (k/n)^{\chi}| \leq \varepsilon (k/n)^{\chi} \quad \textrm{and} \quad |S_{\ell} - (\ell/n)^{\chi}| \leq \varepsilon (\ell/n)^{\chi}.
  \end{eqnarray}
 Moreover, by Lemma \ref{l2} (v) for all $t$, we have
 \begin{equation} \label{ppsgt}
 \en[\varphi_t 1(\psi_t \geq \theta/t)] \geq \theta.
 \end{equation}
 Now using \eqref{skch} and \eqref{ppsgt}, we obtain that for all $n$ large enough 
 \begin{eqnarray} \label{yhl}
\en \left( \frac{1}{S_{k}} \sum \limits_{t=\ell+1}^{k} \varphi_t 1( \psi_t \geq \theta/t)  \mid k, \ell, \kH_n(w) \cap \kE_{\varepsilon} \right) &\geq & \frac{n^{\chi}}{(1+ \varepsilon) k^{\chi} } \en \left(  \theta \sum \limits_{t=\ell+1}^{k} \varphi_t \mid k, \ell, \kH_n(w) \cap \kE_{\varepsilon} \right) \notag\\
&\geq & \frac{\theta n^{\chi}  }{(1+ \varepsilon) k^{\chi} }   \en \left(  S_k - S_{\ell}\mid k, \ell, \kH_n(w) \cap \kE_{\varepsilon} \right)  \notag\\
& = & \frac{\theta n^{\chi}}{(1+ \varepsilon) k^{\chi} }  \left[(1-\varepsilon) \left(k/n \right) ^{\chi} - (1+\varepsilon) \left(\ell/n \right) ^{\chi}  \right]  \notag\\
& \geq & \theta/4,
\end{eqnarray}
since $\varepsilon \in (0,1/2)$ and $\ell =[(k+1)/ \log (k+1)]$. It follows from \eqref{kjj} and \eqref{yhl} that 
 \begin{eqnarray} \label{pgttt} 
 \pn \left(  \psi_{k_{j+1}(w)} \geq \theta/ k_{j+1}(w) \mid k_j(w),\kH_n(w) \cap \kE_{\varepsilon} \right) \geq \theta/4. 
 \end{eqnarray}
 Now it follows from \eqref{pgttt} that 
\begin{equation*}
\pn \left(  \nexists j \in (i/2,i): \psi_{k_{j+1}(w)} \geq \theta/ k_{j+1}(w) \mid \kH_n(w) \cap \kE_{\varepsilon} \right) \leq (1-\theta/4)^{[i/2]}, 
\end{equation*} 
which implies (b).  Finally, (c) can be proved as   \eqref{eq6}.
\end{proof}

\begin{lem} \label{ldeh} Let u be a uniformly chosen vertex from $V_n$. Then 
$$
\pn( \mathcal{M}) \gtrsim \lambda \varphi(\lambda) ^{-1/\psi} ,
$$
where
\begin{align*}
 \mathcal{M} =  \{  \exists w  \in B_{G_n} (u,R_n) : \deg(w) \geq \varkappa^*\log n \} \cap \{ (\xi^u_t) \textrm{ makes $w$ lit inside } B_{G_n} (u,R_n) \}.
\end{align*}
\end{lem}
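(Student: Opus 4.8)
The plan is to transpose to the finite graph $G_n$ the argument behind the lower bound of Proposition \ref{prp}: I will exhibit, with probability $\gtrsim\lambda\varphi(\lambda)^{-1/\psi}$, a finite chain of vertices joining $u$ to a vertex of degree at least $\varkappa^*\log n$ and contained in $B_{G_n}(u,R_n)$, satisfying the hypothesis of Lemma \ref{lst} at each link, and then drive the infection from $u$ along it. By monotonicity it is enough to work with the contact process on the graph $B_{G_n}(u,R_n)$ alone, which is what I will do. The bound has two sources. The factor $\varphi(\lambda)^{-1/\psi}$ comes from the event that $u$ has a neighbour of degree of order $\varphi(\lambda)$: conditioning on $k_0(u)\ge n/2$ and working on $\kE_{\varepsilon}$, the probability that one of the $m$ edges out of $u$ falls in $I_i$ for some index $i$ in the range $[\tfrac12 c_1 n\varphi(\lambda)^{-1/(1-\chi)},\,c_1 n\varphi(\lambda)^{-1/(1-\chi)}]$ with $\psi_i\ge\theta/i$ is, by the construction of $G_n$, \eqref{ecp}, Lemma \ref{l2}(v) and $S_k\asymp(k/n)^{\chi}$ on $\kE_{\varepsilon}$, of order $\varphi(\lambda)^{-\chi/(1-\chi)}=\varphi(\lambda)^{-1/\psi}$; I fix such a vertex, call it $\zeta_1$, so that $k_0(\zeta_1)$ is of order $n$ and, by part (iii) in the proof of Lemma \ref{lbn}, $\deg(\zeta_1)\ge c'(n/k_0(\zeta_1))^{1-\chi}\ge c'c_1^{-(1-\chi)}\varphi(\lambda)\ge 4\varphi(\lambda)$ with high probability, the constant being made large by taking $c_1$ small. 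The factor $\lambda$ comes, exactly as in \eqref{e37}, from the step ``$u$ infects $\zeta_1$'': since $u$ is infected at time $0$ and has $O(1)$ neighbours with high probability, with probability $\gtrsim\lambda$ the infection crosses $u\zeta_1$ before $u$ heals, after which, as $\lambda\deg(\zeta_1)\to\infty$, Lemma \ref{lb}(i) applied to the star $S_{\zeta_1}\subset B_{G_n}(u,R_n)$ makes $\zeta_1$ lit inside the ball with probability $\ge c^*/2$.

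The heart of the argument is the construction of the chain from $\zeta_1$, which I will realise as a subsequence of a single backbone, that of $\zeta_1$. Since $k_0(\zeta_1)\ge n/\log n$ for $n$ large, $\kH_n(\zeta_1)$ holds with probability $1-o(1)$ by Lemma \ref{lbn}, so it suffices to invoke that lemma and the claims (i)--(iii) in its proof for $\zeta_1$ only. Working on $\kE_{\varepsilon}\cap\kH_n(\zeta_1)$: along the backbone $k_0(\zeta_1)>k_1(\zeta_1)>\cdots$ the index contracts by at least a fixed factor per step (claim (i)); a positive proportion of the backbone vertices $v_{k_j(\zeta_1)}$ satisfy $\psi_{k_j(\zeta_1)}\ge\theta/k_j(\zeta_1)$ and hence $\deg(v_{k_j(\zeta_1)})\ge c'(n/k_j(\zeta_1))^{1-\chi}$ (claims (ii)--(iii)); so the degrees of these ``good'' backbone vertices grow geometrically in $j$. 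I then define $\zeta_1=v_{k_{j_1}(\zeta_1)},\zeta_2=v_{k_{j_2}(\zeta_1)},\dots,\zeta_L=v_{k_{j_L}(\zeta_1)}$ with $j_1=0$ by taking $\zeta_{\ell+1}$ to be the first good backbone vertex after step $j_\ell$ inside a window of $A+\varepsilon_0\ell$ steps ($A$ large, $\varepsilon_0$ small absolute constants, available since $c'c_1^{-(1-\chi)}$ can be made large), and stop at the first $L$ with $\deg(\zeta_L)\ge\varkappa^*\log n$. Three checks remain. (a) The probability that some window contains no good vertex is at most $\sum_\ell(1-2\theta)^{A+\varepsilon_0\ell}\le 1/2$ for $A$ large, so the chain exists with probability $\ge 1/2$. (b) Since the degrees grow geometrically from $\ge 4\varphi(\lambda)$ while the window lengths grow only linearly, $\deg(\zeta_\ell)\ge c'c_1^{-(1-\chi)}\varphi(\lambda)\,e^{\Theta(\ell)}\ge\varphi(\lambda)\,(A+\varepsilon_0\ell)\ge\varphi(\lambda)\,d(\zeta_\ell,\zeta_{\ell+1})$ for every $\ell$, which is the hypothesis of Lemma \ref{lst}. (c) The geometric growth forces $L\asymp\log\log n$, so $j_L=\sum_\ell(A+\varepsilon_0\ell)=O((\log\log n)^2)$; taking $R_n=(\log\log n)^2$ with the right constant, the chain and the stars around its vertices (which involve only vertices at distance $\le j_L+2\le R_n$ from $u$) stay inside $B_{G_n}(u,R_n)$, and $j_L\le R_n$ keeps us inside the range controlled by $\kH_n(\zeta_1)$.

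Given the chain, the propagation is the analogue of \eqref{e36}: on the graph $G:=B_{G_n}(u,R_n)$, once $\zeta_1$ is lit we apply Lemma \ref{lst} in turn to $(\zeta_\ell,\zeta_{\ell+1})$ --- valid because $\deg_G(\zeta_\ell)=\deg_{G_n}(\zeta_\ell)$ and $d_G(\zeta_\ell,\zeta_{\ell+1})\le A+\varepsilon_0\ell$ --- so that $\zeta_{\ell+1}$ gets lit, with failure probability at step $\ell$ at most $2\exp(-c^*\lambda^2\deg(\zeta_\ell))\le 2\lambda^{28}$ by \eqref{csp} (using $\deg(\zeta_\ell)\ge 4\varphi(\lambda)$); as the exponents grow geometrically the total failure probability is $O(\lambda^{28})$, so with probability $\ge 1/2$ the infection reaches a lit vertex $w=\zeta_L$ with $\deg(w)\ge\varkappa^*\log n$ inside $B_{G_n}(u,R_n)$. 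Conditioning successively on ``$\zeta_1$ exists'' ($\gtrsim\varphi(\lambda)^{-1/\psi}$), ``the chain from $\zeta_1$ exists'' ($\ge 1/2$), ``$u$ infects $\zeta_1$ and makes it lit'' ($\gtrsim\lambda$) and ``the infection runs along the chain'' ($\ge 1/2$) gives $\pn(\mathcal M)\gtrsim\lambda\varphi(\lambda)^{-1/\psi}$. I expect the chain construction of the second paragraph to be the main obstacle: one must keep the hypothesis of Lemma \ref{lst} valid already at the first links, where the degree is only of order $\varphi(\lambda)$ and so the first window lengths must be bounded, while simultaneously making the cumulative probability of a missing good vertex small over all $\Theta(\log\log n)$ links, which forces the windows to grow, and still keeping the chain of length $O((\log\log n)^2)$ so that it fits inside $B_{G_n}(u,R_n)$ --- which is precisely why the radius is taken as large as $R_n=(\log\log n)^2$, of the order of the square of the number of links.
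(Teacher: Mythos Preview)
Your proposal is correct and follows essentially the same route as the paper: both locate a neighbour of $u$ (the paper's $u_1$, your $\zeta_1$) with index of order $n\varphi(\lambda)^{-1/(1-\chi)}$ and degree $\gtrsim\varphi(\lambda)$ with probability $\gtrsim\varphi(\lambda)^{-1/\psi}$, then build inside $B_{G_n}(u,R_n)$ a chain of vertices of geometrically growing degree ending above $\varkappa^*\log n$, and push the infection along it via Lemma~\ref{lst} after the $\lambda$-cost step of infecting and lighting the first vertex. The only cosmetic difference is that the paper calls the first assertion of Lemma~\ref{lbn} as a black box to produce $w_\ell\in B_{G_n}(u_1,r_\ell)$ with $\deg(w_\ell)\ge\varphi(\lambda)e^{\theta r_\ell}$ for $r_\ell=4\ell/\theta^2$, whereas you inline that argument by running along the backbone of $\zeta_1$ and picking ``good'' vertices in successive windows---but since Lemma~\ref{lbn} is itself proved via exactly those backbone claims (i)--(iii), the two constructions are the same in substance.
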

\begin{proof}
 Define $k_0$ by $v_{k_0}= u$ and for $i \geq 1$ let   $k_i$  be chosen arbitrarily from the indices that  $v_{k_i} $ receives an edge emanating from $v_{k_{i-1}}$. Let us denote $u_1=v_{k_1}$ and define also 
 $$\mathcal{H}_n := \kH_n(u_1)= \{ k_1 \geq n/\log n\} \cap \{  k_{i+1} \geq k_i / \log k_i  \geq \sqrt{n}  \, \,\forall \, 1\leq i \leq R_n +1 \}. $$
In this proof, we assume that $\varepsilon = o(\lambda \varphi(\lambda) ^{-1/\psi})$ as $\lambda \rightarrow 0$. Similarly to  Lemma \ref{lbn} by using that $k_0$ is chosen uniformly from $\{1, \ldots, n\}$, we have $\pn (\kH_n \mid \kE_{\varepsilon}) =1 -o(1/ \log \log n)$ and hence   $\pn(\mathcal{E}_{\varepsilon} \cap \kH_n)=1-o(\lambda \varphi(\lambda) ^{-1/\psi})$. We assume now that these two events happen.

We recall the claim (c) in the proof of Lemma \ref{lbn}: there is a positive constant $c_0$, such that for any $c<c_0$, there exists $c'=c'(c)>0$, such that  
\begin{align} \label{mdqr}
\pn(\deg(v_k) \geq c' (n/k)^{1- \chi} \mid \psi_{k} \geq c/ k) \geq 1 - \exp(- c' (n/k)^{1- \chi}),
\end{align}
for any $v_k \in V_n$.  Let us consider
$$\mathcal{M}_1 =\big\{ k_1 \leq n/\tilde{\varphi}(\lambda) \big\}, $$
where 
$$\tilde{\varphi}(\lambda) =( 4\varphi(\lambda)/ c'\theta^2)^{1/1- \chi},$$
 with $\theta$ a small enough constant (smaller than $\theta_0$ as in Lemma \ref{l2} and \ref{lbn} and than $c_0$), and $c' =c'(\theta)$.  Define
$$\kM_2 = \kM_1 \cap \big \{ \forall \, 1 \leq  \ell \leq R_n' \,\,\, \exists w_{\ell} : d(u_1, w_{\ell}) \leq r_{\ell},  \deg (w_{\ell})\geq \varphi(\lambda) \exp( \theta r_{\ell})  \big \},$$
where  $$r_{\ell}= 4 \ell/ \theta^2  \quad \textrm{for} \quad 1 \leq \ell \leq R_n' := \theta^2 R_n/8.$$
 By applying Lemma \ref{lbn} for $u_1$  we get that for any $\ell \leq R_n'$
\begin{align*}
\pn \left( \max \limits_{ v \in B_{G_n}(u_1, r_{\ell})} \deg(v) \geq \theta e^{\theta r_{\ell}} (n/k_{1})^{1-\chi}  \right) \geq 1 - e^{-\theta r_{\ell}}.
\end{align*}
If $k_{1} \leq n/ \tilde{\varphi} (\lambda)$, then  $$ \theta \exp(\theta r_{\ell}) (n/k_{1})^{1-\chi} \geq \theta \exp(\theta r_{\ell}) \tilde{\varphi} (\lambda)^{1- \chi} \geq  \varphi(\lambda) \exp( \theta r_{\ell}).$$
 Thus 
\begin{align*}
\pn \left( \exists v : d(v, u_1) \leq r_{\ell}, \deg(v) \geq \varphi(\lambda) \exp( \theta r_{\ell}) \mid \kM_1 \right) \geq 1 - e^{-4\ell/\theta}.
\end{align*}
Hence  
\begin{align} \label{m21}
\pn\left( \kM_2 \mid \kM_1\right) & \geq 1- \sum \limits_{\ell =1}^{R_n'} \exp(-4 \ell/ \theta) \geq 1- 2 \exp(-4/ \theta). 
\end{align}
Define 
$$\kM_3 = \kM_1 \cap \{\deg(u_1) \geq 4 \varphi(\lambda)/ \theta^2\}.$$
Similarly to \eqref{yhl}, we can show that
\begin{align*} 
 \pn \left(  \psi_{k_1} \geq \theta/ k_1 \mid  k_1 \leq n/ \tilde{\varphi}(\lambda)\right)  \geq \theta/4.
\end{align*}
Using \eqref{mdqr} and the fact that $c' \tilde{\varphi}(\lambda)^{1- \chi} = 4 \varphi(\lambda)/ \theta^2$,  we get 
\begin{align*}
\pn \left( \deg(u_1) \geq c' (n/k_1)^{1- \chi} \mid k_1  \leq n/ \tilde{\varphi} (\lambda), \psi_{k_1} \geq \theta/ k_1 \right) \geq 1- \exp(-4\varphi(\lambda)/ \theta^2 ) \geq 1/2.
\end{align*}
From the last two inequalities we deduce that 
\begin{align*}
\pn (\kM_3 \mid \kM_1) \geq \theta/8.
\end{align*}
Combining this with \eqref{m21} we obtain that
\begin{align} \label{m231}
\pn (\kM_2 \cap \kM_3 \mid \kM_1) \geq \theta/8 - 2 \exp(-4/\theta) \geq \theta/16.
\end{align} 
We now bound from below $\pn(\mathcal{M}_1).$  Observe that
\begin{align*}
 \pn\left( k_1 \leq n/ \tilde{\varphi} (\lambda) \, \Big |\, k_0 , (\varphi_j)  \right)  & \geq   \frac{S_{[n/ \tilde{\varphi}(\lambda)]} 1(k_0 >n/\tilde{\varphi}(\lambda))}{S_{k_0-1}} \\
& \gtrsim \tilde{\varphi} (\lambda) ^{- \chi} \left( \frac{k_0}{n}\right)^{\chi} 1(k_0 > n/\tilde{\varphi}(\lambda)).
\end{align*}
Since $k_0$ is distributed uniformly on $\{1, \ldots ,n\}$, we get 
\begin{align*}
\en [ (k_0/n)^{\chi} 1(k_0 >n/\tilde{\varphi}(\lambda)) ]  \asymp 1.
\end{align*}
Therefore 
\begin{align*} 
\pn\left( \kM_1  \right) \gtrsim \tilde{\varphi} (\lambda)^{-\chi} \gtrsim \varphi(\lambda) ^{-1/\psi}.
\end{align*}
This and \eqref{m231} give that
\begin{align} \label{m23}
\pn\left( \kM_2 \cap \kM_3  \right) \gtrsim \varphi(\lambda) ^{-1/\psi}.
\end{align}
Observe that on $\kM_2 \cap \kM_3$, we have $\deg(u_1) \geq \varphi(\lambda) r_1 \geq \varphi(\lambda) d(u_1,w_1) $ and 
$$\deg(w_{\ell}) \geq \varphi(\lambda) \exp( \theta r_{\ell}) \geq  2 \varphi(\lambda) r_{\ell+1} \geq  \varphi(\lambda) d(w_{\ell}, w_{\ell+1})$$ 
for all $1 \leq \ell \leq R_n'$. In other words,  $u_1$ and the vertices $(w_{\ell})$ satisfy the condition in Lemma \ref{lst}, and thus applying  this lemma inductively  yields that 
\begin{align} \label{xm} 
& \pn( \textrm{$w_{R_n'} $ is lit inside } B_{G_n}( u_1,R_n) \mid \kM_2 \cap \kM_3, u_1 \textrm{ is lit}) \notag \\
&\geq 1- \sum \limits_{\ell =1}^{R_n'} \exp(- c^* \lambda^2 \varphi(\lambda )e^{\theta r_{\ell}} ) \notag \\
& \gtrsim 1.
\end{align}
Similarly to \eqref{e37},  the probability that  $(\xi^u_t)$ makes $u_1$  lit  is of order $\lambda$. It follows from this and  \eqref{xm}  that 
 \begin{align} \label{xm1} 
\pn( (\xi^{u}_t) \textrm{ makes $w_{R_n'} $ lit inside } B_{G_n}(u, R_n) \mid \kM_2 \cap \kM_3) \gtrsim \lambda.
\end{align}
In addition,   $\deg(w_{R_n'}) \geq \varkappa ^* \log n$. Therefore  
\begin{align*}
\kM \supset \kM_2 \cap \kM_3 \cap \{(\xi^{u}_t) \textrm{ makes $w_{R_n'} $ lit inside } B_{G_n}(u,R_n)\}.
\end{align*}
 Combining this  with \eqref{m23} and \eqref{xm1} gives the result.
\end{proof}
\subsubsection{Proof of \eqref{tlb}} For any $v \in V_n$, we define 
\begin{align*}
 Y_v = 1( \{  \exists w  \in B_{G_n} (v,R_n): \deg(w) \geq \varkappa^* \log n \} \cap \{ (\xi^v_.) \textrm{ makes $w$ lit inside } B_{G_n} (v,R_n) \})
\end{align*}
and 
$$Z_v = Y_v 1(\{\xi^v_{T_n} \neq \varnothing\}),$$
where $T_n$ is as in Proposition \ref{pb}. Then 
\begin{equation} \label{znhx}
\sum \limits_{v\in V_n} Z_v \leq \sum \limits_{v\in V_n} 1(\{\xi^v_{T_n} \neq \varnothing\}).
\end{equation}
\begin{lem} \label{ls}The following assertions hold:
\begin{itemize}
\item[(i)] 
$\pn \left( \frac{1}{n} \sum \limits_{v \in V_n} Y_v \geq c \lambda \varphi(\lambda)^{-1/\psi}  \right) =1 -o(1)$, for some $c>0$, independent of $\lambda$. 
\item[(ii)] $\pn \left(  Z_v=1 \mid Y_v=1   \right) \rightarrow 1$, as $n \rightarrow \infty $ uniformly in $v \in V_n$. 
\end{itemize}
\end{lem}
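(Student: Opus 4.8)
The plan is to establish the two parts separately, with part (i) being the substantial one and part (ii) following quickly from the work already done. For part (i), I would run a second-moment / concentration argument on the sum $\sum_{v\in V_n} Y_v$, entirely parallel to the proof of \eqref{tub} but now using the lower bound from Lemma \ref{ldeh}. Concretely: first observe that by exchangeability of the sequential model (or by directly averaging over the uniformly chosen vertex $u$), $\frac1n\sum_{v\in V_n}\en(Y_v) = \en(Y_u) = \pn(\kM) \gtrsim \lambda\,\varphi(\lambda)^{-1/\psi}$ by Lemma \ref{ldeh}. So the expectation of the empirical average is of the right order. The point that needs care is that $Y_v$ depends only on the contact process and graph structure inside the ball $B_{G_n}(v,R_n)$ of radius $R_n=(\log\log n)^2$, which grows with $n$; nonetheless two such balls around $v,v'$ are disjoint (hence $Y_v,Y_{v'}$ independent) as soon as $d(v,v')\ge 2R_n+1$, and by Lemma \ref{ltyd} the number of pairs $(v,v')$ with $d(v,v')\le 2R_n+1$ is $o(n^2)$ since $2R_n+1 = 2(\log\log n)^2+1 \le b_2\log n/\log\log n$ for $n$ large. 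Therefore $\sum_{v,v'}\cov(Y_v,Y_{v'}) = o(n^2)$, and Chebyshev's inequality gives
\begin{align*}
\pn\left(\frac1n\sum_{v\in V_n} Y_v \ge \tfrac12\,\en(Y_u)\right) = 1-o(1),
\end{align*}
which yields (i) with $c$ half the constant from Lemma \ref{ldeh} (and independent of $\lambda$, since $\en(Y_u)\gtrsim\lambda\varphi(\lambda)^{-1/\psi}$ with an absolute implied constant). One subtlety: one must confirm that the covariance terms for \emph{close} pairs are bounded by $O(1)$ — this is immediate since $Y_v\in\{0,1\}$, so $|\cov(Y_v,Y_{v'})|\le 1$ — so the $o(n^2)$ bound on the number of close pairs is all that is needed.

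For part (ii), the idea is that if $Y_v=1$ then, inside $B_{G_n}(v,R_n)$, the process $(\xi^v_\cdot)$ makes some vertex $w$ with $\deg(w)\ge\varkappa^*\log n$ lit; by the choice of $\varkappa^*$ in \eqref{c} we have $\varkappa^*\ge 3/(c^*\lambda^2)$, so once $w$ is lit, Lemma \ref{lb}(iii) guarantees that $w$ stays lit on a long time interval, and during that interval $w$ infects — w.h.p. — a vertex of the maximal good set $V^*_n$ (using that any large-degree vertex has a neighbor whose private star is big, or more directly applying Lemma \ref{lst} from $w$ to a vertex of $V^*_n$, whose distance is at most $d(G_n)\le b_1\log n$ and $\deg(w)\ge\varkappa^*\log n\ge\varphi(\lambda)b_1\log n$). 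Once $\xi_\cdot\cap V^*_n\ne\varnothing$, Proposition \ref{pb} gives $\pn(\xi_{T_n}\ne\varnothing\mid \xi_0\cap V^*_n\ne\varnothing)=1-o(1)$. Chaining these events (each failing with probability $o(1)$, uniformly in $v$, and in the intermediate vertex $w$ and the target in $V^*_n$ thanks to union bounds as in the proof of Proposition \ref{pb}), we get $\pn(\xi^v_{T_n}\ne\varnothing\mid Y_v=1)=1-o(1)$ uniformly in $v$; that is, $\pn(Z_v=1\mid Y_v=1)\to1$.

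I expect the main obstacle to be the bookkeeping in part (i) around the growing radius $R_n$: one must be sure that $Y_v$ is genuinely $B_{G_n}(v,R_n)$-measurable (it is, by definition, since both the event $\{\exists w\in B_{G_n}(v,R_n):\deg(w)\ge\varkappa^*\log n\}$ and the event that the restricted contact process lits $w$ inside that ball are functions of the induced subgraph and the graphical representation restricted to it), and that the threshold $2R_n+1$ for independence is still $o(\log n/\log\log n)$ so Lemma \ref{ltyd} applies — both of which hold comfortably since $R_n=(\log\log n)^2=o(\log n/\log\log n)$. Apart from this, everything is a routine repetition of the second-moment argument from the proof of \eqref{tub} combined with the quantitative inputs from Lemma \ref{ldeh}, Lemma \ref{lb}, Lemma \ref{lst} and Proposition \ref{pb}.
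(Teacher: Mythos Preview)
Your overall strategy matches the paper's, and part (ii) is essentially identical to the paper's argument. In part (i), though, there is a gap in the covariance step. You assert that when $d(v,v')\ge 2R_n+1$ the variables $Y_v,Y_{v'}$ are independent, on the grounds that each is a function of ``the induced subgraph and the graphical representation restricted to it''. This holds only \emph{conditionally on $G_n$}: the Poisson clocks of the contact process decouple on edge--disjoint balls, but the random subgraphs $B_{G_n}(v,R_n)$ and $B_{G_n}(v',R_n)$ themselves are correlated through the common graph construction (all balls share the sequence $(\psi_t)$ in the P\'olya--urn representation, and even the event that the balls are disjoint is $G_n$--measurable). Thus you only obtain $\cov(Y_v,Y_{v'}\mid G_n)=0$ on the event $\{d(v,v')\ge 2R_n+3\}$, and the residual graph contribution $\cov\big(\en[Y_v\mid G_n],\,\en[Y_{v'}\mid G_n]\big)$ is not controlled by your argument.

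The paper closes this gap by bringing in Lemma~\ref{lbn}. It restricts to pairs $(v_i,v_j)$ with $i,j\ge n/\log n$ and works on $\kE_\varepsilon$; there the graph event $A_v=\{\exists\,w\in B_{G_n}(v,R_n):\deg(w)\ge\varkappa^*\log n\}$ has probability $1-o(1/\log\log n)$. Since $Y_v\le 1(A_v)$, this high--probability graph event suppresses the fluctuation of $\en[Y_v\mid G_n]$ coming from the graph, and combined with the conditional independence of the contact process given $G_n$ yields $\cov(Y_v,Y_{v'})=o(1/\log\log n)$ for $(v,v')\in\kV_n$, hence \eqref{tcy}. Your analogy with the proof of \eqref{tub} is not enough here: there $R$ is a fixed constant, whereas for $Y_v$ the radius $R_n=(\log\log n)^2$ grows, which is exactly why the paper inserts the additional input from Lemma~\ref{lbn}. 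You should incorporate that step.
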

\begin{proof}
For (i), let $\varepsilon \in (0, 1/2)$ be given. We have to show that the probability in the left-hand side is larger than $1- 2\varepsilon$ for $n$ large enough. First, Lemma \ref{ldeh} implies that
\begin{align*}
\pn( Y_u =1) \gtrsim \lambda \varphi(\lambda)^{-1/\psi},
\end{align*}
or equivalently
\begin{align*} 
\frac{1}{n} \sum \limits_{v\in V_n} \pn(Y_v=1) \gtrsim \lambda\varphi(\lambda)^{-1/\psi}.
\end{align*}
Using Chebyshev's inequality, the result follows from  this and the following claim: on $\kE_{\varepsilon}$
\begin{align} \label{tcy}
\sum \limits_{v,v' \in V_n} \cov(Y_v, Y_{v'} ) =o(n^2).
\end{align}
To prove it, we consider
\begin{align*}
\kV_n =\{(v_i,v_j): i,j \geq n/\log n, d(v_i,v_j) \geq 2R_n+3 \}.
\end{align*}
Since $R_n +1 \leq b_2 \log n/ (\log \log n)$ for $n$ large enough, it follows from Lemma \ref{ltyd}  that 
\begin{align} \label{ccv}
\sum \limits_{v,v'  \in V_n} \pn ((v,v') \not \in \kV_n) =o(n^2).
\end{align}
On the other hand, Lemma \ref{lbn} gives that if $i \geq n/ \log n$, then 
\begin{align*}
 \pn \left( \exists w  \in B_{G_n} (v_i,R_n) : \deg(w) \geq \varkappa^* \log n \mid \kE_{\varepsilon} \right) =1-o(1/ \log \log n).
\end{align*}
Moreover, given the graph $G_n$, $Y_v$ and $Y_{v'}$  only depend on the Poisson processes defined on the vertices and edges on the balls $B_{G_n}(v,R_n) $ and $ B_{G_n}(v',R_n)$ respectively. Hence on $\kE_{\varepsilon}$ for all $(v,v') \in \kV_n$,  
\begin{align} \label{cy}
\cov(Y_v, Y_{v'} ) =o(1/ \log \log n).
\end{align}
Now \eqref{tcy} follows from \eqref{ccv} and \eqref{cy}.

We now prove (ii).  If $Y_v =1$, then there exists a vertex $w$ such that  $\deg(w) \geq \varkappa^* \log n$ and $w$ is lit at some time. Besides, on $\kB_n$ the diameter of the graph is bounded by $b_1 \log n$ w.h.p. Hence similarly to Lemma \ref{lst}, we can show that on $ \mathcal{B}_n$
\begin{align*}
\pn( w \textrm{ infects a vertex in $V^*_n$ }   )  & \geq 1-  \exp(- c^* \varkappa^* \lambda^2\log n). 
\end{align*}
If one of the vertices in $V^*_n$ is infected, it follows from Proposition \ref{pb} that w.h.p.  the virus survives up to time $T_n$. Hence we obtain (ii) by using that  $\kB_n$ holds w.h.p.
  \end{proof}
It  follows from  \eqref{znhx}, Lemma \ref{ls} and  Markov's inequality that w.h.p.
\begin{equation*}
\frac{1}{n} \sum \limits_{v\in V_n} 1(\{\xi^v_{T_n} \neq \varnothing\}) \gtrsim  \lambda \varphi(\lambda)^{-1/\psi},
\end{equation*}
which proves \eqref{tlb}. \hfill $\square$
\section{Proof of Proposition \ref{cvelpa}}
Let us first recall a result which we will use below.   
\begin{prop} \cite[Lemma A.1]{MMVY} \label{pM} 
Let $(G_n) = (V_n, E_n)$ be a sequence of graphs, such that $|G_n|= n$, for all $n$. Let $(\xi_t)_{t\geq 0}$ be the contact process on $G_n$ starting from full occupancy and let $\tau_n$ be the extinction time of this process.  Assume that there exist sequences of positive numbers $(a_n), (b_n)$ satisfying
\begin{itemize}
\item[(i)] $\lim \limits_{n \rightarrow \infty} a_n =\lim \limits_{n \rightarrow \infty} b_n = \infty, \lim \limits_{n \rightarrow \infty} \frac{a_n}{b_n} =0$;
\item[(ii)] $ \lim \limits_{n \rightarrow \infty} \sup \limits_{A \subset V_n}  \pn (\xi^A_{a_n} \neq \xi_{a_n}, \xi_{a_n} \neq \varnothing) = 0$;
\item[(iii)] $\lim \limits_{n \rightarrow \infty}\pn (\tau_n < b_n) = 0$.
\end{itemize}
Then, $\tau_n/ \en (\tau_n)$ converges in distribution, as $n \rightarrow \infty$, to the exponential distribution of parameter $1$.
\end{prop}
This result for the case of finite boxes in $\Z^d$ is proved by Mountford in \cite{M}. Then it is stated for general case as above. We will apply this result to prove the following proposition \ref{cpcel} which  implies  Proposition \ref{cvelpa}.  
\begin{prop} \label{cpcel}
Let $(G_n)$ be a sequence of connected graphs, such that $|G_n|= n$, for all $n$. 
Let $\tau_n$ denote the extinction time of the contact process on $G_n$ 
starting from full occupancy. Assume that
\begin{align} \label{cnas}
\frac{D_{n,\max}}{d_n \vee \log n} \rightarrow \infty,
\end{align}
with $D_{n,\max}$ the maximum degree and $d_n$ the diameter of $G_n$.  Then 
\begin{align*} 
\frac{\tau_n}{\en (\tau_n)}\quad  \mathop{\longrightarrow}^{(\kL)}_{n\to \infty} \quad  \kE(1).
\end{align*}
\end{prop} 
\begin{proof} First we set $\bar{\lambda}= \lambda \wedge 1$. Using Lemma \ref{lb}, we get  w.h.p.
\begin{align}
\label{ctaunDnmax}
\tau_n \geq \exp(c \bar{\lambda}^2 D_{n,\max}), 
\end{align}
with $c=c^*/2$ and $c^*$ as Lemma \ref{lb}.

Hence, according to Proposition \ref{pM},   it suffices to show that there exists a sequence $(a_n)$, such that
\begin{equation} \label{apt}
a_n = o (\exp(c \bar{\lambda}^2 D_{n,\max}))
\end{equation}
and
\begin{eqnarray}
\label{cxivxi}
\sup_{v\in V_n}\, \pn(\xi^v_{a_n} \neq \xi_{a_n}, \xi^v_{a_n} \neq \varnothing) = o(1),
\end{eqnarray}
where $(\xi_t)_{t\ge 0}$ denotes the process starting from full occupancy.

Using  \eqref{cnas}, we can find a sequence $(k_n)$ tending to infinity, such that
\begin{align}
\label{cDnmaxdn}
\frac{D_{n,\max}}{(\log n \vee d_n)k_n} \rightarrow \infty.
\end{align}
Now define 
\begin{align} \label{avbn}
b_n= \exp(c \bar{\lambda}^2 (\log n \vee d_n) k_n ) \quad \textrm{and}\quad a_n=4b_n+1.  
\end{align}
Then \eqref{ctaunDnmax} and \eqref{cDnmaxdn} show that $a_n$ satisfies \eqref{apt}, so it amounts now to prove \eqref{cxivxi} for this choice of $(a_n)$. To this end it is convenient to introduce the dual contact process. 
Given some positive real $t$ and 
$A$ a subset of the vertex set $V_n$ of $G_n$,  the dual process $(\hat{\xi}^{A,t}_s)_{s\le t}$ is defined by 
\[\hat{\xi}^{A,t}_s = \{ v\in V_n : (v,t-s)\longleftrightarrow A \times \{ t \} \},\]
for all $s\le t$. It follows from the graphical construction that for any $v$, 
\begin{eqnarray} \label{ccl2}
&&\nonumber \pn(\xi^v_{a_n} \neq \xi_{a_n}, \xi^v_{a_n} \neq \varnothing)\\
 &=& \pn (\exists w\in V_n : \xi^v_{a_n}(w) = 0,\, \xi^v_{a_n} \neq \varnothing,\, \hat{\xi}^{w,a_n}_{a_n} \neq \varnothing) \notag\\
&\le & \sum_{w\in V_n} \pn\left(\xi^v_{a_n} \neq \varnothing,\, \hat{\xi}^{w,a_n}_{a_n} \neq \varnothing, \textrm{ and } \hat{\xi}^{w,a_n}_{a_n-t} \cap  \xi^v_t  = \varnothing \textrm{ for all } t\le a_n\right), 
\end{eqnarray}
So let us prove now that the last sum above tends to $0$ when $n\to \infty$. Set 
$$\beta_n = [k_n (d_n \vee \log n)],$$
and let $u$ be a vertex with degree larger than $\beta_n$. Let then $S(u)$ be a star graph of size $\beta_n$ centered at $u$.  
As in Proposition \ref{pb}, we say that $u$ is infested if the number of its infected neighbors \textit{in $S(u)$} is larger than $\bar{ \lambda} \beta_n/(16e)$. 
We first claim that 
\begin{align}
\pp(\xi^v_{b_n} \neq \varnothing, u \textrm{ is  not infested before } b_n )  = o(1/n). \label{cvbn1}
\end{align}  
To see this, define $K_n=[b_n/d_n]$ and for any $0 \leq k \leq K_n-1$
\[A_k:=\{\xi^v_{kd_n}\neq \varnothing\},\] 
and 
\[B_k:= \left\{ \xi_{kd_n}^v\times\{kd_n\} \longleftrightarrow (u,(k+1)d_n-1) \right\} \cap \{u \textrm{ is infested at time } (k+1)d_n\}.\]
 Note that 
\begin{align}
\label{cinc.bn}
\{\xi^v_{b_n} \neq \varnothing, u \textrm{ is  not infested  before } b_n\} \ \subset \  \bigcap_{k=0}^{K_n-1} A_k \cap B_k^c.
\end{align} 
It is not difficult to see that  
\begin{eqnarray*}
\pn\left((z,t)\longleftrightarrow (z',t+d_n-1)\right) \ge \exp(-C d_n) \quad \textrm{for any $z, z'\in V_n$ and $t\ge 0$},
\end{eqnarray*}
for some constant $C>0$.  On the other hand,  Lemma \ref{lb} (i) implies that  if $u$ is infected at time $t$ then it is infested at time $t+1$ with probability larger than $1/3$, if $n$ is large enough.
Therefore for any $k\le K_n-1$, 
$$\pn(B_k^c\mid \kG_k){\bf 1}(A_k) \le 1-\exp(-Cd_n)/3,$$
with $\kG_k$ the sigma-field generated by  the contact process up to time $ kd_n$. Iterating this, we get  
\begin{eqnarray*}
\pn\left(\bigcap_{k=0}^{K_n-1} A_k \cap B_k^c\right) &\le  & (1-\exp(-Cd_n)/3)^{K_n-1} = o(1/n),
\end{eqnarray*}
where the last equality follows from the definition of $b_n$. Together with \eqref{cinc.bn} this proves our claim \eqref{cvbn1}. 
Then by using Lemma \ref{lb} (iii) we get 
\begin{align} \label{cvb}
\pp(\xi^v_{b_n} \neq \varnothing, u \textrm{ is not infested at time } 2b_n ) = o(1/n).
\end{align}
Therefore, if we define 
\begin{align*}
\kA(v)&=\{\xi^v_{b_n} \neq \varnothing, u \textrm{ is infested at time $2b_n$}  \}, 
\end{align*}
we get 
$$
\pp(\kA(v)^c, \xi^v_{b_n} \neq \varnothing)=o(1/n).
$$
Likewise if 
\begin{align*}
\hat{\kA}(w)&= \{\hat{\xi}^{w,4b_n+1}_{b_n}  \neq  \varnothing,  \exists \, U \subset S(u): |U| \geq \frac{\bar{\lambda}}{16e}\beta_n\textrm{ and } (x,2b_n+1) \leftrightarrow (w,4b_n+1) \, \forall \, x \in U \}.
\end{align*}
then  
$$ 
\pp(\hat{\kA}(w)^c, \hat{\xi}^{w, 4b_n+1}_{b_n} \neq \varnothing)=o(1/n). 
$$
Moreover, $\kA(v)$ and $\hat{\kA}(w)$ are independent for all $v$, $w$. 
Now the result will follow if we can show that for any $A,B \subset S(u)$ with $|A|, |B|$ larger than $\bar{\lambda}\beta_n/(16e)$
\begin{eqnarray} \label{caub}
\pp(A \times \{2b_n\} \mathop{\longleftrightarrow}^{S(u)} B \times \{2b_n+1\} ) = 1-o(1/n),
\end{eqnarray} 
where the notation 
$$A \times \{2b_n\} \mathop{\longleftrightarrow}^{S(u)} B \times \{2b_n+1\}$$ 
means that there is an infection path inside $S(u)$ from a vertex in $A$ at time $2b_n$ to a vertex in $B$ at time $2b_n+1$. 
To prove \eqref{caub}, define
\begin{align*}
\bar{A} &=\{x \in A \setminus \{u\}: \bar{A}_{x} \textrm{ holds } \},\\
\bar{B} &=\{y \in B \setminus \{u\}: \bar{A}_{y} \textrm{ holds }\},
\end{align*}
where 
\[\bar{A}_{x}  = \{\textrm{ there is no recovery at $x$ between $2b_n$ and $2b_n+1$}\}.\]
Observe that   
$$\pp(\bar{A}_{x} \textrm{ holds})= 1-e^{-1}.$$
Therefore,  the standard large deviations results show that $|\bar{A}|$ and $|\bar B|$ are larger than $(1-e^{-1}) \bar{\lambda} \beta_n/(32e)$, with probability at least $1-o(1/n)$. Now let 
$$\kE= \{|\bar{A}| \geq (1-e^{-1}) \bar{\lambda} \beta_n/(32e)\} \cap \{|\bar{B}| \geq (1-e^{-1}) \bar{\lambda} \beta_n/(32e)\}.
$$
We set  
\begin{align*}
\varepsilon_n = \frac{1}{(\log n) \sqrt{k_n}} \quad \textrm{and} \quad J_n = \left[ \frac{(\log n) \sqrt{k_n}}{2}\right],
\end{align*}
with $(k_n)$ as in \eqref{cDnmaxdn}, and define for $0 \leq j \leq J_n-1$
\begin{eqnarray*}
C_j& =& \{\textrm{ there is no recovery at $u$ between $2b_n+2j \varepsilon_n$ and $2b_n+ (2j +2) \varepsilon _n$}\} \\
  & \cap & \{\exists x \in \bar{A}: \textrm{ there is an infection from $x$ to $u$   between $2b_n+2j \varepsilon_n$ and $2b_n+ (2j +1) \varepsilon _n$}\} \\
& \cap & \{\exists y \in \bar{B}: \textrm{ there is an infection from $u$ to $y$   between $2b_n+(2j+1) \varepsilon_n$ and $2b_n+ (2j +2) \varepsilon _n$}\}.
\end{eqnarray*}
Observe that 
\begin{eqnarray}
\label{cunionCj}
\bigcup_{j=0}^{J_n-1} C_j \subset \Big\{A \times \{2b_n\} \mathop{\longleftrightarrow}^{S(u)} B \times \{2b_n+1\}\Big\}. 
\end{eqnarray} 
Moreover, conditionally on $\bar A$ and $\bar B$, the events $(C_j)$ are independent, and
\begin{align*}
\pp(C_j \mid \bar A, \bar B) & = e^{-2 \varepsilon_n} \pp(\bin(|\bar A|, 1- e^{-\varepsilon_n}) \geq 1)\times \pp(\bin(|\bar B|, 1- e^{-\varepsilon_n}) \geq 1)\\
&\ge  1/2,
\end{align*}
on the event $\kE$, if $n$ is large enough. Therefore
\begin{eqnarray*}
\pn\left(\kE, \,  \bigcap_{j=0}^{J_n-1} C_j^c\right) &\le  & 2^{-J_n} = o(1/n).
\end{eqnarray*}
This together with \eqref{cunionCj} imply  \eqref{caub}, and concludes the proof of the proposition.
\end{proof}

{\it Proof of Proposition \ref{cvelpa}}.  Using the same arguments for \eqref{eq6}, we have for any sequence $(a_n)$ tending to infinity
\begin{equation*}
\pp \Big(\exists \ell \in [c n/ a_n, n/a_n]: \deg(v_{ \ell }) \geq c a_n^{1 - \chi} \Big) =1-o(1),
\end{equation*}
for some $c>0$. By taking $a_n = \sqrt{n}$, we obtain that 
\begin{equation} \label{xdgs}
\pp (D_{n,\max} \geq cn^{(1- \chi)/2}) = 1-o(1).
\end{equation}
Now Proposition \ref{cvelpa}  follows from Proposition \ref{cpcel}, Lemma \ref{lmdi} and \eqref{xdgs}. \hfill $\square$  
\section{Appendix:  Proof of \eqref{uopr}}
  We closely follow the proof given in \cite{MVY} for Galton-Watson trees.  However, we have to take care that in our situation the degrees of the vertices are not independent as in Galton-Watson trees. This leads to some complications.

\vspace{0.2 cm}
To simplify the computation, we consider a modified version of the P\'olya-point graph defined as follows: we use the same construction  except that now $m_v = m$ and $\gamma_v \sim F'$ for all vertices. Then, the new tree stochastically dominates the original tree (note that $F \preceq F'$) and thus, it is sufficient to prove the upper bound for the contact process on this new tree. In this appendix, we always consider this modified graph, and for simplicity, we use the same notation as for the P\'olya-point graph. Now our goal is to prove that 
\begin{align} \label{muopr}
\pp(\xi^o_t \neq \varnothing \,\,\forall t \geq 0) \lesssim \lambda^{1 + \frac{2}{\psi}} |\log \lambda|^{\frac{-1}{\psi}},
\end{align}
where $(\xi^o_t)_{t\geq 0}$ is the contact process on the modified P\'olya-point graph starting from the root $o$.

\vspace{0.2 cm}
To describe more precisely the distribution of the P\'olya-point graph, we recall a basic fact of Poisson  processes (see for example  \cite[Chapter 2]{DV}).

\vspace{0.2 cm}
\noindent {\bf Claim}. For any $a<b$, let $\Lambda (a,b)$ be the set of arrivals of the Poisson  process on $[a,b]$ with intensity $f(x)$.  Then conditional  on $|\Lambda (a,b)|=k$, these $k$ arrivals are independently  distributed on $[a,b]$ with the same density $f(x)/ \int_a^b f(t) dt$.

\vspace{0.2 cm}

 We observe that for any vertex $v$ in the P\'olya-point graph, conditioned on its position $x_v$ and  its number of descendants $m+k$,   $x_{(v,1)}, \ldots, x_{(v,m)}$   are i.i.d. uniform random variables on $[0, x_v]$, and  $x_{(v,m+1)}, \ldots, x_{(v,m+k)}$ are arrivals of a Poisson process on  $[x_v,1]$ with intensity $\gamma_v \frac{\psi x^{\psi-1}}{1-x_v^{\psi}} dx$ (conditional on having $k$ arrivals). Therefore  $x_{(v,m+1)}, \ldots, x_{(v,m+k)}$ are independently  distributed on $[x_v,1]$ with  the same density $\frac{\psi x^{\psi-1}}{1-x_v^{\psi}} dx$.

 On the other hand,   similarly to \eqref{8}, for all $v$ with $a=m+2mr+1$,
\begin{align} \label{pkx}
p(k,x) & = \pp(\textrm{number of descendants of } v =m+k \mid x_v=x) \notag \\
 & = \frac{\Gamma(k+a)}{ \Gamma(a) k!} (1-x^{\psi})^{k} x^{a \psi } \notag \\
 & \asymp k^{a-1} (1-x^{\psi})^k x^{a \psi},
\end{align} 
since $\Gamma(k+a)/k! \asymp k^{a-1}$. Moreover,
\begin{align*}
\sum \limits_{k \leq M} k^{a-1} (1-x^{\psi})^k & \asymp \sum \limits_{k =0 }^{ M \wedge [x^{-\psi}]} k^{a-1} + \int_{M \wedge x^{-\psi}}^M  \exp(-t x^{\psi}) t^{a-1} dt \\
& \asymp (M \wedge x^{-\psi} )^{a} + x^{-a\psi}\int_{M x^{\psi} \wedge 1}^{ M x^{\psi}}  \exp(-u ) u^{a-1} du \\
& \asymp (M \wedge x^{-\psi} )^{a}.
\end{align*}
Therefore
\begin{align} \label{tpk0}
\sum \limits_{k \leq M} p(k,x)\asymp  (M \wedge x^{-\psi} )^{a} x^{a \psi}.
\end{align}
Similarly,
\begin{align} 
\sum \limits_{k \leq M} k p(k,x) & \asymp  (M \wedge x^{-\psi} )^{a+1} x^{a \psi}, \label{tpk1} \\
\sum \limits_{k \leq M} k^2 p(k,x) & \asymp  (M \wedge x^{-\psi} )^{a+2} x^{a \psi}, \label{tpk2} 
\end{align}
Using \eqref{tpk1} and taking $M$ tend to infinity, we have
\begin{equation} \label{tkpk}
\sum \limits_{k\geq 0} k p(k,x)  = \kO(  x^{-\psi}). 
\end{equation}
 Let $r >0$ and $ M \in \mathbb{N}$ be given. For any vertex $v$, define the truncated tree starting from $v$ as
\begin{align*}
T_{r,M}^v=\{v\} \cup & \{ w  \textrm{ descendant of } v :   d(v,w) \leq r, \deg(y) \leq M \\ 
 & \textrm{ for all }   y \not \in \{v,w\}  \textrm { in the geodesic from $v$ to } w \},
\end{align*}
and for $1 \leq i \leq r$, set
\begin{align*}
T^{v}_{i,r,M} & = \{ w  \in  T^v_{r,M} : d(v,w)=i  \} .\\
S^{v}_{i,r,M} &= \{w  : d(v,w)=i \textrm{  and $w$ is a leaf of } T^v_{r,M} \} .
\end{align*}
If $v=o$, we simply write $T_{r,m}$, $T_{i,r,m}$, and $S_{i,r,m}$. By definition, if $v \neq o$, then
\begin{align}
|T^{v}_{1,r,M}| & = \deg(v) -1, \label{t1v} \\
|S^{v}_{1,r,M}| & = |\{w  : w  \textrm{ is a child of $v$ with }  \deg(w) >M \}|, \label{s1v} \\
|T^{v}_{i+1,r,M}| &= \sum \limits_{j=1}^{\deg(v)-1} |T^{(v,j)}_{i,r,M}| 1( \deg((v,j))\leq M) \quad \textrm {for }  \, 1 \leq i \leq r-1, \label{tiv}\\
|S^{v}_{i+1,r,M}| &= \sum \limits_{j=1}^{\deg(v)-1} |S^{(v,j)}_{i,r,M}| 1( \deg((v,j))\leq M) \quad \textrm {for }  \, 1 \leq i \leq r-2, \label{siv} \\
|S^{v}_{r,r,M}| &= |T^{v}_{r,r,M}|. \label{trv}
\end{align}
If $v=o$, we just replace $\deg(o)-1$ by $\deg(o)$ in these equations.

As in  \cite{MVY} (more precisely, Sections 6.1, 6.2 and 6.3), we can see that the proof of \eqref{muopr}  follows from the four following lemmas. 
\begin{lem} \label{lup1}
There is a positive constant $C$, such that for all $1 \leq i \leq r$ and  $M\geq m+1$,
\begin{align*}
\E(|T_{i,r,M}|) \leq C^i(\log M)^{i-1}. 
\end{align*}
Furthermore for all $1 \leq i \leq r-1$ and $M\geq m+1$,
\begin{align*}
\E(|S_{i,r,M}|) \leq C ^i(\log M)^{i-1} M^{-1/ \psi}. 
\end{align*}
\end{lem}
In fact, the bound for $\E(|T_{i,r,M}|)$ plays the same role as the estimate (6.2) in \cite{MVY}, and the  bound for $\E(|S_{i,r,M}|)$ is similar to  (6.1) in \cite{MVY}.
\begin{lem}  \label{lup2}
For all $r >0$ and $M\geq m+1$, we have
\begin{align*} 
\E(|S_{1,r,M}|1(|S_{1,r,M}| \geq 2) \mid \deg(o) \leq M) = \kO(M^{-1-1/\psi} \log M).
\end{align*}
\end{lem}
This result is used in the estimate in (6.7) in \cite{MVY}.
\begin{lem} \label{lup3}
For all $r >0$ and $ M' \geq  M \geq m+1$, we have
$$\pp( \deg(o) \leq M, |S_{1,r,M}|=1)= \kO(M^{-1/\psi}),$$
$$\pp(\deg(o^*) \geq M' \mid \deg(o) \leq M, S_{1,r,M}=\{o^*\})= \kO\left( \left(M/M'\right)^{1/\psi}\right).$$
\end{lem}
The first estimate is used for the event $ B^4_4$ in \cite{MVY}, and the second one is an analogue of the bound for $q[M', \infty)/q[M, \infty)$ in Proposition 6.3  (at the first line of page 27).

For any  $r >0$ and $M' \geq  M \geq m+1$, we define the  conditional probability measures 
\begin{align*}
 \mathbb{Q}_1 (\cdot) & = \pp(\cdot \mid \deg(o) \leq M, |S_{1,r,M}| = 1),\\
 \mathbb{Q}_2 (\cdot) & = \pp(\cdot \mid \deg(o) \leq M, S_{1,r,M} = \{o^*\}, \deg(o^*) = M').
\end{align*}
We call $T^*$ the tree  $T$ rooted 	at $o^*$. Similarly  as for $T$, we also define $T^*_{i,r,M}$, $S^*_{i,r,M}$.
\begin{lem} \label{lup4}
There is a positive constant $C$, such that for all $1 \leq i \leq r$ and $M' \geq M \geq m+1$
\begin{align*}
 \E_{\mathbb{Q}_1}(|T_{i,r,M}|) \leq (C \log M)^{i}. 
\end{align*}
\begin{align*}
 \E_{\mathbb{Q}_2}(|T^*_{i,r,M}|) \leq C^i( \log M)^{i-1} M'. 
\end{align*}
Furthermore for all $1 \leq i \leq r-1$ and $M \geq m+1$,
\begin{align*}
 \E_{\mathbb{Q}_2}(|S^*_{i,r,M}|) \leq C^i(\log M)^{i-1} M'M^{-1}.
\end{align*}
\end{lem} 
The bound for $\E_{\mathbb{Q}_1}(|T_{i,r,M}|)$ is used in (6.12) in \cite{MVY},  the bounds for $\E_{\mathbb{Q}_2}(|T^*_{i,r,M}|)$ and $\E_{\mathbb{Q}_2}(|S^*_{i,r,M}|)$ are used in their Section 6.3.

\vspace{0.3 cm} 
Assume that Lemmas \ref{lup1}--\ref{lup4} hold for a moment, we now prove the upper bound of the survival probability of the contact process on $T$.

\vspace{0.3 cm}

\noindent {\it Proof of \eqref{muopr}}. Using  the same notation in \cite{MVY}, we set  
\begin{align*}
M = \left \lceil \frac{1}{8 \lambda^2} \right \rceil \textrm{ and } R = \left \lceil \frac{2/\psi+5}{2/\psi-1} \right \rceil  +1.
\end{align*}
and define
\begin{eqnarray*}
B_1^4 &= & \{ \deg(o) >M \},\\
B_2^4 &= & \left \{ \deg(o) \leq M, (o,0) \leftrightarrow \left(\bigcup _{i=2} ^{R} S_{i,R,M} \right) \times \mathbb{R}_+ \textrm{ in } T_{R,M} \right\},\\
B^4_3 &= &\left \{ \deg(o) \leq M, |S_{1,R,M}| \geq 2, (o,0) \leftrightarrow  S_{1,R,M}  \times \mathbb{R}_+ \textrm{ in } T_{R,M} \right\}. 
\end{eqnarray*} 
When $S_{1,R,M} = \{o^* \}$, let  $0<t^*<t^{**}$ be  the first two arrival times of the process $D_{(o, o^*)}$ (the Poisson process of intensity $\lambda$ representing the infections from $o$ to $o^*$).  Then, we define 
$$B^4_4 = \{  \deg(o) \leq M, |S_{1,R,M}| =1, t^{**} < \inf D_o \}.$$
We say that $o^*$ becomes infected {\it directly} if $t^* < \inf D_o$. We say that  it becomes infected {\it indirectly} if there are  infection paths from $o$ to $o^*$ but all these paths must visit at least one vertex different from $o$ and $o^*$ . Define  
$$ B^4_5 =  \{\deg(o) \leq M, |S_{1,R,M}| =1, \exists \,y \in T_{R,M}, 0 < s< t: (o,0) \leftrightarrow (y,s) \leftrightarrow (o,t) \textrm{ inside } T_{R,M } \}.$$
Note that if $o^*$ becomes infected indirectly, then $B^4_5$ must occur. Let us define 
\begin{align*} 
B^4_6=  \{ \deg(o) \leq M,|S_{1,R,M}|=1, t^* < \inf D_o,  (o^*, t^ *) \leftrightarrow B(o,R)^c \times [t^*, \infty) \}.
 \end{align*}
Then
\begin{equation} \label{potr}
\{ (o,0) \leftrightarrow B(o,R)^c \times \mathbb{R}_+ \} \subset \bigcup_{i=1}^6 B_i^4.
\end{equation}
{\bf Event} $B^4_1$. We observe that by \eqref{pkx}  
\begin{eqnarray*}
\pp(\deg(o)>M \mid x_o =x) & \asymp & x^{a \psi}\sum \limits_{k>M} k^{a-1} (1- x^{\psi})^k \notag \\
& \lesssim & y^{a \psi} \int_M^{\infty} t^{a-1} \exp(-t x^{\psi}) dt \notag \\
&  \asymp &\int_{Mx^{\psi}}^{\infty} t^{a-1} \exp(-t ) dt  \notag \\
& \lesssim &   \exp(-M x^{\psi}/2). 
\end{eqnarray*}
By combining this with the fact that $x_o$ is distributed on $[0,1]$ with density $(\psi+1) x^{\psi} dx$, we get
\begin{eqnarray*}
\mathbb{P} (B^4_1) &=& \int_0^1 \pp(\deg(o)>M \mid x_o =x) (\psi+1) x^{\psi} dx \\
& \lesssim &  \int_0^1 \exp(-M x^{\psi}/2) x^{\psi} dx \\
& \lesssim & M^{-1-1/\psi} = \kO(\lambda^{2 +2/\psi}).
\end{eqnarray*} 
{\bf Event} $ B_2^4$. As in \cite{MVY}, we have 
\begin{eqnarray*}
\mathbb{P} (B^4_2)  &\leq & \sum _{i=2} ^R (2 \lambda) ^i \mathbb{E} (|S_{i,R,M}|) \\
 & \leq &   M^{-1/ \psi} \sum _{i=2} ^{R-1}  (C \lambda \log M )^i + (C \lambda \log M )^R \\
& \lesssim & \lambda^{-2/ \psi  } \sum _{i=2} ^{R-1}  (\lambda |\log \lambda|)^i + (\lambda |\log \lambda|)^R \\
& \lesssim  & \lambda^{3/2 + 2/ \psi }. 
\end{eqnarray*}
Here, for the first inequality we have used Lemma \ref{lup1} and \eqref{trv}. 

\vspace{0.2 cm}
\noindent {\bf Event} $ B^4_3$. As in \cite{MVY}, we have for $\lambda$ small enough
\begin{eqnarray*}
\mathbb{P}(B^4_3) &\leq & (2 \lambda )  \mathbb{E}( |S_{1,R,M}| 1 (|S_{1,R,M}| \geq 2  ) \mid  \deg(o) \leq M ) \\
& =& \kO( \lambda M^{-1 -1/ \psi} \log M) = \kO( \lambda \times \lambda^{2 +2/ \psi} |\log \lambda|)\\
&= & \kO(\lambda^{2 + 2/ \psi}),
\end{eqnarray*}
where we have used Lemma \ref{lup2} in the second line. 

\vspace{0.2 cm}
\noindent {\bf Event} $B^4_4$. The number of transmissions from $o$ to $o^*$ before  time $t$ has Poisson distribution with parameter $\lambda t$. Thus, 
\begin{eqnarray*} \mathbb{P}(B^4_4) & = &\mathbb{P}( \deg(o) \leq M, |S_{1,R,M}| = 1 )  \int_0^{\infty} \mathbb{P}( \textrm{Poi}( \lambda t) \geq 2 ) e^{-t} dt \\
 & =& \kO(\lambda^{2/ \psi})  \int_0^{\infty} \lambda^2 t^2  e^{-t} dt \\
 &=& \kO(\lambda^{2+ 2/ \psi}).
 \end{eqnarray*}
Note that in the second line, we have used Lemma \ref{lup3} to estimate the first term and the fact that $ \mathbb{P}( \textrm{Poi}(u) \geq 2 ) \leq u^2$ to bound the second one.

\vspace{0.2 cm}
\noindent {\bf Event} $B^4_5$. As in \cite{MVY},
\begin{displaymath}
\mathbb{P}(B^4_5) \leq \mathbb{P} ( \deg(o) \leq M, |S_{1,R,M}|= 1 ) \mathbb{P}\left( 
\begin{array}{c|c}
\exists \, y \in T_{R,M}, 0<s<t: & \deg(o) \leq M,\\
(o,0) \leftrightarrow (y,s)\leftrightarrow (o,t)   \textrm{ inside } T_{R,M} & |S_{1,R,M}| =1
\end{array} \right).
\end{displaymath}
By Lemma \ref{lup3}, the first term is   $\kO( \lambda ^{2/\psi}$). On the other hand, by the same argument in \cite{MVY} the second one is bounded by  
\begin{eqnarray*} 
& &\sum_{i=1}^{R} \lambda^{2i} \times \mathbb{E}(|\{x \in T_{R,M}: d(o,x)=i \}| \mid \deg(o) \leq M, |S_{1,R,M}| =1) \\
& =& \sum_{i=1}^{R} \lambda^{2i} \times \mathbb{E}_{\Q_1}(|T_{i,R,M}|) \\
& \leq & \sum_{i=1}^{R} ( C \lambda^2 |\log \lambda|)^{i} \\
& =& \kO(\lambda^{3/2}).
\end{eqnarray*} 
Thus we have  
$$\mathbb{P}(B^4_5) = \kO( \lambda ^{3/2 +2/ \psi }).$$

\noindent {\bf Event} $B^4_6$. We have
\begin{eqnarray*}
\mathbb{P}(B^4_6)& \leq & \mathbb{P}(\deg(o) \leq M, |S^1_{R,M}|=1) \times \mathbb{P}(t^* < \inf D_o)  \\
&  & \times   \mathbb{P}((o^*, t^*) \leftrightarrow B(o,R)^c \times [t^*, \infty ) \mid \deg(o) \leq M, |S^1_{R,M}|=1, t^* < \inf D_o) \\
& \lesssim &   \lambda^{2/ \psi } \times \lambda \times   \mathbb{P} ((o^*, 0) \leftrightarrow  B(o,R)^c \times \mathbb{R}_+ \mid \deg(o) \leq M, |S^1_{R,M}|=1). 
\end{eqnarray*}
Note that we have used Lemma \ref{lup3} to bound the first probability. Now, it remains to bound the third term. We observe that   for any $M' >M$,
\begin{eqnarray*}
& & \mathbb{P}((o^*, 0) \leftrightarrow B(o,R)^c \times \mathbb{R}_+ \mid  \deg(o) \leq M, |S_{1,R,M}|=1) \\
& = &\sum \limits_{k=M+1}^{\infty}  \mathbb{P}((o^*, 0) \leftrightarrow B(o,R)^c \times \mathbb{R}_+ \mid \deg(o^*) =k, \deg(o) \leq M, S_{1,R,M}=\{o^*\} )   \\
 & & \hspace{1cm} \times   \mathbb{P}(\deg(o^*)=k \mid \deg(o) \leq M, S_{1,R,M}=\{o^*\}  )\\
&\leq & \mathbb{P}((o^*, 0) \leftrightarrow B(o,R)^c \times \mathbb{R}_+ \mid \deg(o^*) =M', \deg(o) \leq M, S_{1,R,M}=\{o^*\} ) \\
& &+ \mathbb{P}(\deg(o^*) \geq M' \mid \deg(o) \leq M, S_{1,R,M}=\{o^*\}  ) \\
& \lesssim & \Q_2((o^*, 0) \leftrightarrow B(o,R)^c \times \mathbb{R}_+) + (M/M')^{1/\psi}.
\end{eqnarray*}
Here, we used Lemma \ref{lup3} to bound the second term and $\Q_2$ is the conditional probability depending on $M'$ which was defined in Lemma \ref{lup4}.

As in \cite{MVY},  we take 
\[M'= \lceil \varepsilon_1 \lambda^{-2} |\log \lambda| \rceil,\]
with 
\[\varepsilon_1= \varepsilon_1'/64 \qquad \textrm{and} \qquad \varepsilon_1'= \min \{(2/\psi-1), 2 \}/4. \]
Then  the second term is of order  
\[(M/M')^{1/\psi} \asymp |\log \lambda|^{-1/\psi}.\]
To bound the first term, we notice that 
\begin{eqnarray*}
\Q_2((o^*, 0) \leftrightarrow B(o,R)^c \times \mathbb{R}_+) \leq \Q_2((o^*, 0) \leftrightarrow B(o^*,R-1)^c \times \mathbb{R}_+).
\end{eqnarray*}
Hence, it remains to prove the following result.
\begin{lem} \label{dtd} There exists $\delta >0$, such that
$$ \Q_2((o^*, 0) \leftrightarrow B(o^*,R-1)^c \times \mathbb{R}_+) < \lambda ^ {\delta}.$$
\end{lem}
\begin{proof}
We follow the proof and notation in \cite{MVY}, let $R'=R-1$, and  $ L_1 = \lceil \lambda ^ {- \epsilon _1'/2} \rceil$. Then we define
\begin{align*}
\phi(T^*)= & \sum\limits_{i=1}^{R'} (2 \lambda)^i |S^*_{i,R',M}(T)|,\\
\psi(T^*)= & \sum\limits_{i=2}^{R'} (2 \lambda)^{2i} |T^*_{i,R',M}|,
\end{align*}
where $T^*$ is the tree $T$ rooted at $o^*$ and $T^*_{i,r,M}, S^*_{i,r,M}$ are defined in Lemma \ref{lup4}. 
We now define  
\begin{eqnarray*}
B^5_1  &= & \{ \phi (T^*) > \lambda ^{\epsilon_1 '}\}, \quad \quad B^5_2 = \{ \psi (T^*) > \lambda ^{\epsilon_1 '}\},\\
B^5_3&=& (B^5_1 \cup B^5_2) ^c \cap \left \{ \{o^*\} \times [0, L_1] \leftrightarrow \left(\bigcup_{i=1}^{R'}S^*_{i,R',M} \right) \times \mathbb{R}_+ \right \},\\
B^5_4&=& (B^5_1 \cup B^5_2) ^c \cap \{ \exists z : d(o^*,z ) \geq 2, \{o^*\} \times [0, L_1] \leftrightarrow (z,s) \leftrightarrow \{o^*\} \times [s, \infty)  \\
 & & \hspace{9 cm} \textrm{ inside } T(z) \cap T_{R',M} \},\\
B^5_5&=& \{ B(o^*,1) \times \{0\} \leftrightarrow B(o^*,1) \times \{L_1\} \textrm{ inside }  B(o^*,1)\}.
\end{eqnarray*}
It is explained in  \cite{MVY} that 
$$ \{ (o^*,0) \leftrightarrow B(o^*,R')^c \times \mathbb{R}_+\} \subset \bigcup _{i=1} ^5 B^5_i .$$
{\bf Event} $ B^5_1$.  Similarly to $B_1^4$, using Lemma \ref{lup4}, we have 
\begin{eqnarray*}
\mathbb{E}_{\mathbb{Q}_2} (\phi(T^*) ) &\lesssim &  \sum \limits_{i=1}^{R'-1} (2 \lambda) ^i  (\log M)^{i-1} M'M^{-1} + (2 \lambda) ^{R'} M' M^{-1}(\log M) ^{R'}\\
& \lesssim & \lambda^{3/4}. 
\end{eqnarray*}
Then using Markov's inequality we get
$$\mathbb{Q}_2(B^5_1)= \kO( \lambda^{3/4 - \varepsilon_1'}) = \kO(\lambda^{1/4}), $$
since $\varepsilon_1' \leq 1/2$.

\vspace{0.2 cm}
\noindent {\bf Event} $ B^5_2$. We have
\begin{eqnarray*}
\mathbb{E}_{\mathbb{Q}_2} (\psi(T^*) ) & \lesssim &  M' \sum \limits_{i=2}^{R'} (2 \lambda) ^{2i } (\log M) ^{(i-1)} \\
& \lesssim & \lambda^{3/2}.
\end{eqnarray*}
Then it follows from Markov's inequality that 
$$\mathbb{Q}_2 (B^5_2 ) =  \kO (\lambda^{3/2 - \varepsilon_1'}) = \kO(\lambda).$$
The events $B^5_3, B^5_4, B^5_5$ are exactly the same as in \cite{MVY}.  
\end{proof}
We now conclude the proof of \eqref{uopr}. By \eqref{potr} and the estimates of $(B^4_i)_{i \leq 5}$, we obtain 
\begin{eqnarray*}
\pp((o,0) \leftrightarrow B(o,R)^c \times \mathbb{R}_+ ) & \lesssim & \pp(B^4_6) \\
& \lesssim & \lambda^{1 + 2/ \psi} (\lambda^{\delta} + |\log \lambda|^{-1/\psi}) \\
& = & \kO(\lambda^{1 + 2/ \psi} |\log \lambda|^{-1/\psi}),
\end{eqnarray*}
which proves the desired result. \hfill $\square$

\vspace{0.5 cm}
\noindent {\it Proof of Lemma \ref{lup1}}.  Since we fix $r$ and $M$, we omit it in the notation. Let us define for $1 \leq i \leq r$
\begin{align*}
f_i(x) = \E(|T^v_{i}| 1(\deg(v) \leq M) \mid x_v =x),
\end{align*}
where $v$ is any vertex different from the root $o$. For $1 \leq i \leq r-1$,
\begin{align*}
f_{i+1}(x) & = \E(|T^v_{i+1}| 1(\deg(v) \leq M) \mid x_v =x) \\
           & = \sum \limits_{k \leq M-m-1} \E(|T^v_{i+1}| 1(\deg(v) =m+1+k) \mid x_v =x) \\
           & = \sum \limits_{k \leq M-m-1} \E(|T^v_{i+1}|  \mid x_v =x, \deg(v) =m+1+k) p(k,x) \\
           & = \sum \limits_{k \leq M-m-1} \sum \limits_{j =1 }^{ m+k} \E(|T^{(v,j)}_{i}| 1(\deg((v,j)) \leq M)  \mid x_v =x, \deg(v) =m+1+k) p(k,x), 
\end{align*}
where for the last line, we used \eqref{tiv}.

On the event  $\{ x_v =x,  \deg(v)=m+1+k \}$, $x_{(v,1)}, \ldots, x_{(v,m)}$ are uniformly distributed on $[0,x]$ and  $x_{(v,m+1)}, \ldots, x_{(v,m+k)}$  are distributed on $[x,1]$ with density $\frac{\psi y^{\psi -1}}{1-x^{\psi}} dy$. Therefore
 \begin{align} \label{eqf}
f_{i+1}(x)  & = \sum \limits_{k \leq M-m-1} \left( \frac{m}{x} \int_0^x f_i(y) dy + \frac{k}{1-x^{\psi}} \int_x^1 \psi y^{\psi-1}f_i(y) dy  \right)  p(k,x) \notag \\
 & \leq \frac{m}{x} \int_0^x f_i(y) dy  +  \frac{F(M,x)}{1-x^{\psi}} \int_x^1 \psi y^{\psi-1} f_i(y) dy,
\end{align}
where 
$$F(M,x)=\sum \limits_{k \leq M-m-1} k p(k,x).$$
 Moreover, it follows from \eqref{t1v} that
\begin{align} \label{f1}
f_1(x) = \E((\deg(v)-1)1(\deg(v) \leq M) \mid x_v=x) = m+ F(M,x).
\end{align}
Hence by \eqref{tpk1},
\begin{align} \label{fmx} 
f_1(x) \lesssim F^*(M,x):=  (M \wedge x^{-\psi} )^{a+1} x^{a\psi} +1.
\end{align}
After some simple computations, we have
\begin{align} \label{tf1}
\frac{1}{x} \int_0^x F^*(M,y) dy \lesssim F^* (M,x)\log M. 
\end{align}
and
\begin{align} \label{tf2}
\frac{1}{1-x^{\psi}} \int_x^1 \psi y^{\psi-1} F^*(M,y) dy \lesssim \log M. 
\end{align}
From \eqref{eqf}, \eqref{f1}, \eqref{fmx}, \eqref{tf1} and \eqref{tf2},  we can prove by induction that  for  $1 \leq i \leq r$,
\begin{align} \label{fix}
f_i(x) \leq C^i(\log M)^{i-1}F^* (M,x).
\end{align}
for some constant $C>0$. Similarly to \eqref{eqf}, we also have 
\begin{align*}
\E(|T_i| \mid x_o=x) \leq  \frac{m}{x} \int_0^{x} f_{i-1}(y) dy + \sum \limits_{k  \geq 0} \frac{k p(k,x)}{1-x^{\psi}} \int_{x}^1 \psi y^{\psi-1}f_{i-1}(y) dy. 
\end{align*}
It follows from this estimate, \eqref{tkpk} and \eqref{fix}  that
\begin{align*}
\E(|T_i| \mid x_o=x) \leq C^i( \log M)^{i-1}(F^*(M,x) + x^{-\psi}).
\end{align*}
Hence using that $x_o \sim \kU([0,1])^{\chi}$ with $\chi=1/(\psi +1)$, we get that for $i \leq r$
\begin{eqnarray*}
\E(|T_i|) &= &(\psi+1) \int _0^1 \E(|T_i| \mid x_o = x) x^{\psi} dx  \\
&\leq & C^i( \log M)^{i-1}.
\end{eqnarray*}
Now, to estimate $\E(|S_i|)$, we  define for $1 \leq i \leq r-1$ 
\begin{align*}
g_i(x)= \E(|S^v_i| 1(\deg(v) \leq M) \mid x_v=x).
\end{align*}
As for $f_i(x)$, we also have for $1 \leq i \leq r-2$,
\begin{align} \label{rg}
g_{i+1}(x) \leq \frac{m}{x} \int_0^x g_i(y) dy + \frac{F(M,x)}{1-x^{\psi}} \int_x^1 \psi y^{\psi -1} g_i(y) dy,
\end{align}
and by \eqref{s1v},
\begin{align} \label{g1}
g_1(x) & = \E(|\{w: w \textrm{ is a child of $v$ with }  \deg(w) >M \}| 1(\deg(v) \leq M) \mid x_v=x ) \notag\\
& \leq \frac{m}{x} \int_0^x g_0(y)  dy + \frac{F(M,x)}{1-x^{\psi}} \int_x^1 \psi y^{\psi -1} g_0(y) dy,  
\end{align}
where 
\[g_0(y)= \pp(\deg(w)>M \mid x_w =y).\]
It follows from \eqref{pkx} that 
\begin{align*}
g_0(y) & \asymp y^{a \psi}\sum \limits_{k>M} k^{a-1} (1- y^{\psi})^k \\
& \lesssim y^{a \psi} \int_M^{\infty} t^{a-1} \exp(-t y^{\psi}) dt  \\
&  \asymp \int_{My^{\psi}}^{\infty} t^{a-1} \exp(-t ) dt  \\
& \lesssim    \exp(-M y^{\psi}/2). 
\end{align*}
On the other hand, if $1> My^{\psi} >1/2$ then
\begin{align*}
g_0(y) & \gtrsim y^{a \psi}\sum \limits_{k=M+1}^{2M} k^{a-1} (1- y^{\psi})^k \\
& \gtrsim 1.
\end{align*}
Therefore
\begin{align} \label{goy}
1(1/2 < My^{\psi}<1) \lesssim g_0(y) \lesssim \exp(-M y^{\psi}/2). 
\end{align}
Let us define for $M\geq 2$
\begin{align} 
\alpha & = p_{L,M}(x) =  \pp(\deg((v,1)) >M \mid x_v =x) =\frac{1}{x} \int_0^x g_0(y) dy \label{dplm} \\ 
 \beta &= p_{R,M}(x) = \pp(\deg((v,m+1)) >M \mid x_v =x)  = \frac{1}{1-x^{\psi}} \int_x^1 \psi y^{\psi-1}g_0(y) dy.  \label{dprm}
\end{align}
Then using \eqref{goy} we obtain that
\begin{align}
(Mx^{\psi})^{-1/\psi} 1(Mx^{\psi} \geq 1 ) \lesssim \alpha  &\lesssim 1(Mx^{\psi}<1) + (Mx^{\psi})^{-1/\psi} 1(Mx^{\psi} \geq 1 ) \label{plm} \\
\beta &  \lesssim  M^{-1}\exp(-M x^{\psi}/2). \label{prm}
\end{align}
Define 
$$G^*(M,x)= 1(Mx^{\psi} <1) + (M x^{\psi})^{-1/\psi} 1(Mx^{\psi} \geq 1).$$
Then 
$$\beta F(M,x)  = \kO(G^*(M,x)).$$
Therefore using this, \eqref{g1}, \eqref{plm} and \eqref{prm} we get 
\begin{align} \label{bg1}
g_1(x) = \kO(G^*(M,x)).
\end{align}
Furthermore, 
\begin{align}
\frac{1}{x} \int_0^x G^*(M,y) dy & = \kO(  (\log M) G^*(M,x) ), \label{pgso}\\
\frac{1}{1-x^{\psi}} \int_x^1 \psi y^{\psi-1} G^*(M,y) dy & = \kO(M^{-1} 1(Mx^{\psi} <1) + M^{-1/\psi} 1(Mx^{\psi} \geq 1)) \label{pgsm}.
\end{align}
Hence
\begin{align*}
\frac{m}{x} \int_0^x G^*(M,y) dy + \frac{F(M,x)}{1-x} \int_x^1 G^*(M,y) dy = \kO( (\log M) G^*(M,x)).
\end{align*}
From this estimate, \eqref{rg} and \eqref{bg1}, we can prove by induction  that  for $1 \leq i \leq r-1$
\begin{align}
g_i(x) = \kO((\log M)^{i-1} G^*(M,x)). \label{pgix}
\end{align}
We now have
\begin{align*}
\E(|S_i| \mid x_o) &\leq  \frac{m}{x_o} \int_0^{x_o} g_{i-1}(y) dy + \sum \limits_{k  \geq 0} \frac{k p(k,x_o)}{1-x_o^{\psi}} \int_{x_o}^1 \psi y^{\psi -1} g_{i-1}(y) dy. \\
& = \kO \left( \frac{1}{x_o} \int_0^{x_o} g_{i-1}(y) dy +  \frac{x_o^{-\psi}}{(1-x_o^{\psi})} \int_{x_o}^1 \psi y^{\psi -1}g_{i-1}(y) dy \right)\\
& = \kO \left( (\log M)^{i-1} [ G^*(M,x_o) +  x_o^{-\psi}(M^{-1}1(Mx^{\psi}<1) + M^{-1/ \psi} 1(Mx^{\psi}\geq 1))] \right).
\end{align*}
Finally,
\begin{align*}
 \E[G^*(M,x_o) +  x_o^{-\psi}(M^{-1}1(Mx^{\psi}<1) + M^{-1/ \psi} 1(Mx^{\psi}\geq 1))]   = \kO(M ^{-1/ \psi}).
\end{align*}
Then the result follows from  the  last two estimates. 
\hfill $\square$

\vspace{0.2 cm}
\noindent {\it Proof of Lemma \ref{lup2}}. We also omit here $r$ and $M$ in the notation. Then
\begin{align*}
&\E(|S_1| 1(|S_1| \geq 2) \mid \deg(o) \leq M, x_o=x) \\
& \leq \sum \limits_{k \leq M} \E(|S_1| 1(|S_1| \geq 2) \mid x_o=x, \deg(o)=m+k) p(k,x).  
\end{align*}
Conditionally on the event  $\{x_o=x,  \deg(o)=m+k\}$, $|S_1|$ has the same distribution as 
\begin{align*}
X= X_1 + \ldots + X_m + Y_1 + \ldots + Y_k,
\end{align*}
where $(X_i)$ and $(Y_j)$ are independent Bernoulli random variables with mean $\alpha = p_{L,M}(x)$ and $\beta = p_{R,M}(x)$ respectively, as defined  in \eqref{dplm} and \eqref{dprm}.  Then
\begin{align*}
\E(X1(X \geq 2)) & =  \E(X) - \pp(X=1) \\
& = m \alpha + k \beta - m \alpha (1- \alpha)^{m-1} (1- \beta)^k - k \beta (1- \alpha)^{m} (1- \beta)^{k-1}  \\
& \leq (m \alpha + k \beta)^2 \\
& \leq 2(m^2  \alpha^2 + k^2 \beta^2).
\end{align*}
Therefore
\begin{align*}
\E(|S_1| 1(|S_1| \geq 2) \mid \deg(o)\leq M, x_o=x)&  \leq \sum \limits_{k \leq M} ( 2m^2 \alpha^2 + 2k^2 \beta ^2) p(k,x) \\
& \leq 2 m^2 \alpha^2 + 2 \beta^2  \sum \limits_{k \leq M} k^2 p(k,x). 
\end{align*}
We now take the expectation with respect to $x_o$. Since it has density $(\psi+1) x^{\psi} dx$ on $[0,1]$, the expectation of the first term is of order  
\begin{align*}
 \int_0^1 \alpha^2 x^{\psi} dx & \lesssim  \int_0^1 [1(Mx^{\psi}<1) + (Mx^{\psi})^{-2/\psi} 1(Mx^{\psi} \geq 1 )] x^{\psi} dx \\
 & \lesssim  \int_0^{M^{-1/\psi}} x^{\psi} dx  + M^{-2/\psi}\int_{M^{-1/\psi}}^1  x^{\psi -2} dx \\
 &=\kO( M^{-1-1/\psi} \log M).
\end{align*}
By \eqref{tpk2} and \eqref{prm}, the expectation of the second term is equivalent to
\begin{align*}
 \int _0^1 \beta^2 (M \wedge x^{-\psi})^{a+2} x^{(a+1)\psi} dx &\lesssim  M^{-2} \int _0^1 \exp(-M x^{\psi}) (M \wedge x^{-\psi})^{a+2} x^{(a+1)\psi} dx \\
 &\lesssim  M^{-2}  \int_0^{M^{-1/\psi}}  M^{a+2} x^{(a+1)\psi} dx + M^{-2}\int_{M^{-1/\psi}}^1 \exp(-M x^{\psi}) x^{-\psi} dx\\
&\lesssim M^{-1-1/\psi} + M^{-2}\int_1^M e^{-u} \left( \frac{M}{u}\right) M^{-1/\psi} u^{-1+ 1/\psi} du \\
&  =\kO(M^{-1-1/\psi}),
\end{align*}
where for the third line we used the change of variables $u=Mx^{\psi}$. Combining these last two estimates, we get the lemma.
\hfill $\square$

\vspace{0,2 cm}
\noindent {\it Proof of Lemma \ref{lup3}}.
With the same $\alpha$ and $\beta$ as in the previous lemma, we have
\begin{align*}
& \pp(\deg(o) \leq M, |S_1|=1 \mid x_o=x) \\
& = \sum \limits_{k \leq M-m } \left[  
m \alpha (1- \alpha)^{m-1} (1- \beta)^k + k \beta (1- \alpha)^{m} (1- \beta)^{k-1} \right] p(k,x) \\
& \leq \sum \limits_{k \leq M } \left[ m \alpha + k \beta \right] p(k,x) \\
& \lesssim \alpha + \beta (M \wedge x^{- \psi})^{a+1} x^{a \psi}.
\end{align*}
Then we take expectation in $x_o$ and get
\begin{align*}
& \pp(\deg(o) \leq M, |S_1|=1) = \kO(M^{-1/\psi}),
\end{align*}
which proves the first estimate. For the second one, we note that $(1- \beta)^k \asymp 1$ for $1 \leq k \leq M$ since $\beta = \kO(M^{-1})$. Hence  using \eqref{tpk0}, we get
\begin{align*}
 \pp(\deg(o) \leq M, |S_1|=1 \mid x_o=x) \gtrsim \alpha (1- \alpha)^{m-1} (M \wedge x^{- \psi})^{a} x^{a \psi}.
\end{align*}
It follows from \eqref{plm} and the fact that $\alpha \leq 1$ that  there is a positive constant $C$ independent of $x$, such that 
$$\alpha \leq 1(Mx^{\psi}<1) + (C(Mx^{\psi})^{-1/\psi} \wedge 1) 1(Mx^{\psi} \geq 1 )$$
Hence, there is a positive constant $c=c(C)$, such that
\begin{align} \label{balp}
 (1- \alpha)^{m-1} \geq c1(M x^{\psi} \geq 1/c).
\end{align} 
 Therefore   
\begin{align} \label{lo}
\pp(\deg(o) \leq M, |S_1|=1) & = (\psi +1) \int_0^1 \pp(\deg(o) \leq M, |S_1|=1 \mid x_o=x) x^{\psi} dx \notag \\
& \gtrsim \int_0^1 \alpha (1- \alpha)^{m-1} (M \wedge x^{-\psi})^a x^{a \psi} x^{\psi} dx \notag  \\
& \gtrsim   \int_{(cM)^{-1/ \psi}}^1  (M  x^{\psi})^{-1/ \psi} x^{\psi} dx  \notag \\
& \gtrsim M^{-1/ \psi}.
\end{align}
On the other hand,
\begin{align*}
\pp(\deg(o) \leq M, S_1=\{o^*\}, \deg(o^*) \geq M' \mid x_o=x) & \leq \sum \limits_{k \leq M-m } \left[  
m \alpha' + k \beta'  \right] p(k,x). \\
& \leq m \alpha' + \beta' F(M,x),
\end{align*}
with $$\alpha'=p_{L,M'}(x)\qquad \textrm{and} \qquad \beta'= p_{R,M'}(x).$$ Similarly to the calculus for $\alpha$ and $\beta$, we get 
\begin{align*}
\int_0^1 \alpha' x^{\psi} dx = \kO ((M')^{-1/\psi}), 
\end{align*} 
and
\begin{eqnarray*}
\int_0^1 \beta'  F(M,x) x^{\psi} &\lesssim & \frac{1}{M'} \int_0^1 e^{-M'x^{\psi}/2} (M\wedge x^{-\psi})^{a+1}x^{(a+1)\psi} dx  \\
&\lesssim & (M')^{-1-1/\psi}.
\end{eqnarray*}
 Hence 
\begin{align} \label{upo}
\pp(\deg(o) \leq M, S_1=\{o^*\}, \deg(o^*) \geq M') & =\kO \left( \int_0^1 (m \alpha' + \beta' F(M,x)) x^{\psi} dx \right) \notag \\
& = \kO \left( (M')^{-1/\psi} \right).
\end{align}
The result now follows from \eqref{lo} and \eqref{upo}.
\hfill $\square$

\vspace{0,2 cm}
\noindent {\it Proof of Lemma \ref{lup4}.}
 We start with the estimate for $|T^*_{i,r,M}|$.  Let us define 
\begin{align*}
A= \{\deg(o) \leq M\} \quad \textrm{ and }\quad  B=\{S_{1,r,M} = \{o^*\}, \deg(o^*) = M'\}.
\end{align*}
Similarly to the previous lemma, 
\begin{align} 
& \pp(\deg(o) \leq M, S_{1,r,M} = \{o^*\}, \deg(o^*) = M' \mid x_o=x) \notag \\
 & = \sum \limits_{k \leq M-m }  \left[m \alpha'' (1- \alpha)^{m-1}  (1-\beta)^k  + k \beta'' (1- \alpha)^{m}  (1-\beta)^{k-1} \right] p(k,x) \label{dgo}\\
 & \gtrsim  \alpha'' (1- \alpha)^{m-1} (M \wedge x^{- \psi})^a x^{a \psi}, \label{dtnd}
\end{align}
where 
\begin{align*}
\alpha'' := \pp(\deg((0,1)) =M' \mid x_o =x) = \frac{1}{x} \int_0^x p(M',y) dy
\end{align*}
and 
\begin{align*}
\beta''  := \pp(\deg((0,m+1)) =M' \mid x_o =x)= \frac{1}{1-x^{\psi}} \int_x^1 p(M',y)\psi y^{\psi -1} dy. 
\end{align*}
Similarly to the calculus for $\alpha$ and $\beta$, we have
\begin{align}
(M')^{-1-1/\psi} x^{-1} 1(M'x^{\psi} \geq 1) \lesssim \alpha''  \lesssim & x^{\psi}(M'x^{\psi})^{a-1} 1(M'x^{\psi} <1) \label{app} \\
& + (M')^{-1-1/\psi} x^{-1} 1(M'x^{\psi} \geq 1),  \notag \\
\beta''  \lesssim & (M')^{-2} e^{-M'x^{\psi}/2}. \label{bpp}
\end{align}
Hence, it follows from \eqref{balp}, \eqref{dtnd} and \eqref{app} that 
\begin{align*}
\pp(A \cap B \mid x_o=x) \gtrsim (M')^{-1-1/\psi} x^{-1} 1(Mx^{\psi} \geq 1/c).
\end{align*}
Therefore 
\begin{align} \label{lab}
\pp(A \cap B) \gtrsim (M')^{-1-1/\psi}.
\end{align}
We now prove that 
\begin{align*}
\E(|T^*_{i,r,M}| 1(A)1(B)) \leq C^i(\log M)^{i-1} M' (M')^{-1-1/\psi} .
\end{align*}
For $i=1$,  observe that on $A \cap B$,  $|T^*_{1,r,,M}|= \deg(o^*) = M'$. It follows from \eqref{dgo} that 
\begin{align*}
\pp(A \cap B \mid x_o=x) \leq m\alpha''+ \beta'' F(M,x).
\end{align*}
Then using that
\begin{align*}
 \int_0^1 \alpha'' x^{\psi} dx & = \kO((M')^{-1-1/\psi})  \\
\int_0^1 \beta'' F(M,x) x^{\psi } dx & =\kO((M')^{-2-1/\psi}),
\end{align*}
we obtain 
\begin{align} \label{pab}
\pp(A \cap B) = \kO((M')^{-1-1/\psi}). 
\end{align}
Therefore 
\begin{align*}
\E(|T^*_{1,r,M}| 1(A)1(B)) = \kO(M'(M')^{-1-1/\psi}).
\end{align*}
For $i \geq 2$, we notice that
\begin{align} \label{tsrm}
|T^*_{i,r,M}| & = |T^{o^*}_{i,r,M}| + \sum \limits_{(0,j) \neq o^* }|T^{(0,j)}_{i-2,r-2,M}|,
\end{align}
with the convention $|T_0|=1$.
We see that for any $1 \leq i \leq r$
\begin{align}
h_{1,i}(x)& = \E \left(\sum \limits_{(0,j) \neq o^* } |T^{(0,j)}_{i,r,M}| 1(A) 1(B) \mid x_o =x \right) \notag \\
& = \sum \limits_{k=0 }^{M-m} \E \left( \sum \limits_{(0,j) \neq o^*} |T^{(0,j)}_{i,r,M}|  1(B) \mid  x_o =x, \deg(o)=m+k \right)p(k,x) \notag \\
& = \sum \limits_{k=0 }^{M-m} \E \left( \sum \limits_{1 \leq j\neq s \leq m+k } |T^{(0,j)}_{i,r,M}|  1(B_{s,k}) \mid  x_o =x, \deg(o)=m+k \right)p(k,x), \label{h1i}
\end{align}
where 
$$B_{s,k}= \{\deg(o)=m+k\} \cap \{\deg((0,s))=M', \deg((0,j)) \leq M \,\, \forall \, j \neq s\}.$$
If $j \neq s$, then 
\begin{align}
& \E \left( |T^{(0,j)}_{i,r,M}|  1(B_{s,k}) \mid x_{(0,1)}, \ldots, x_{(0,m+k)} \right) \notag \\
& \leq \E \left( |T^{(0,j)}_{i,r,M}|  1(\deg((0,j)) \leq M) \mid x_{(0,j)} \right) \pp(\deg((0,s))=M' \mid x_{(0,s)}) \notag  \\
& = f_i(x_{(0,j)}) p(M',x_{(0,s)}). \label{fip}
\end{align}
Now using this estimate and the fact that $x_{(0,1)}, \ldots, x_{(0,m)}$ are uniformly distributed in $[0,x_o]$ and that $x_{(0,m+1)}, \ldots, x_{(0,m+k)}$  are distributed in  $[x_o,1]$ with density $\frac{\psi y^{\psi-1}dy}{1-x_o^{\psi}}$, we deduce from \eqref{h1i} and \eqref{fip} that
\begin{align} 
h_{1,i}(x) & \leq  \left( \frac{1}{x} \int_0^x f_i(y)dy \right) \alpha''  \sum \limits_{k=0 }^{M-m} m^2  p(k,x) \label{h1} \\
& +   \left( \frac{1}{1-x^{\psi}} \int_x^1 \psi y^{\psi-1}f_i(y)dy \right) \beta''  \sum \limits_{k=0 }^{M-m} k^2 p(k,x) \notag \\
&+ \left[\left( \frac{1}{x} \int_0^x f_i(y)dy \right) \beta'' + \left( \frac{1}{1-x^{\psi}} \int_x^1 \psi y^{\psi-1} f_i(y)dy \right) \alpha''  \right] \sum \limits_{k=0 }^{M-m} (m k) p(k,x). \notag
\end{align}
Then using these estimates, \eqref{tf1}, \eqref{tf2}, \eqref{fix}, we obtain 
\begin{align*} 
h_{1,i}(x) & \lesssim (\log M)^i \left(  \alpha'' F^*(M,x) \sum \limits_{k \leq M} p(k,x)  + \beta'' \sum \limits_{k \leq M} k^2 p(k,x) + \left(\beta'' F^*(M,x) + \alpha'' \right)\right) \\
& = (\log M)^i (H_1(x)+H_2(x) +H_3(x)).
\end{align*}
After some computation, we get
\[\int_0^1 (H_1(x)+H_2(x)+H_3(x))x^{\psi} dx = \kO((M')^{-1-1/\psi} \log M)\]
Hence
\begin{align*} 
\int_0^1 h_{1,i}(x)x^{\psi} dx \leq   (C \log M)^{i+1} (M')^{-1-1/\psi}.
\end{align*}
Note that in \eqref{tsrm}, we need an estimate for $h_{1,i-2}(x)$:
\begin{align} \label{tph1}
\int_0^1 h_{1,i-2}(x)x^{\psi} dx \leq   (C \log M)^{i-1} (M')^{-1-1/\psi}.
\end{align}
If $j=s$, then 
\begin{eqnarray*}
& &\E \left( |T^{(0,j)}_{i,r,M}|  1(B_{j,k}) \mid x_{(0,1)}, \ldots, x_{(0,m+k)} \right) \\
& \leq & \E \left( |T^{(0,j)}_{i,r,M}|  1(\deg((v,j)) = M') \mid x_{(0,j)} \right)\\
& \leq &\left( \frac{m}{x_{(0,j)}} \int _0^{x_{(0,j)}}  f_{i-1}(y) dy + \frac{M'-m}{1-x_{(0,j)}^{\psi}} \int _{x_{(0,j)}}^1 \psi x^{\psi -1} f_{i-1}(y) dy \right) p(M',x_{(0,j)})\\
& \leq & C^i(\log M)^{i-1} M'p(M',x_{(0,j)}).
\end{eqnarray*}
 Hence 
\begin{eqnarray*}
h_{2,i}(x) & = & \E \left(|T^{o^*}_{i,r,M}|  \mid x_o=x  \right) \\
& = &\sum \limits_{k=0 }^{M-m} \E \left( \sum \limits_{j  =1}^{m+k} |T^{(0,j)}_{i,r,M}|  1(B_{j,k}) \mid  x_o =x, \deg(o)=m+k \right)p(k,x) \\
&  \leq & C ^i( \log M)^{i-1} M' \sum \limits_{k=0 }^{M-m} \left( m \alpha'' + k \beta''\right)   p(k,x).
\end{eqnarray*}
Therefore using the same estimate as \eqref{pab}, we get
\begin{align} \label{th2}
\int_0^1h_{2,i}(x) x^{\psi} dx    \leq C^i(\log M)^{i-1} M'(M')^{-1-1/\psi}.
\end{align}
From \eqref{lab}, \eqref{tsrm}, \eqref{tph1} and \eqref{th2} we deduce that 
\[\E_{\mathbb{Q}_2}(|T^*_{i,r,M}|) \leq C^i(\log M)^{i-1} M'.\]
We  estimate $|S^*_{i,r,M}|$ by the same way. First, as for $h_{1,i}(x)$, we have
\begin{eqnarray*}
l_{1,i}(x)& = & \E \left(\sum \limits_{(0,j) \neq o^* } |S^{(0,j)}_{i,r,M}| 1(A) 1(B) \mid x_o =x \right) \\
& \leq &   \left( \frac{1}{x} \int_0^x g_i(y)dy \right) \alpha''  \sum \limits_{k=0 }^{M-m} m^2  p(k,x)  \\
& +  & \left( \frac{1}{1-x^{\psi}} \int_x^1 \psi y^{\psi-1}g_i(y)dy \right) \beta''  \sum \limits_{k=0 }^{M-m} k^2 p(k,x) \notag \\
&+ & \left[\left( \frac{1}{x} \int_0^x g_i(y)dy \right) \beta'' + \left( \frac{1}{1-x^{\psi}} \int_x^1 \psi y^{\psi-1} g_i(y)dy \right) \alpha''  \right] \sum \limits_{k=0 }^{M-m} (m k) p(k,x). \notag
\end{eqnarray*}
Then using \eqref{pgso}, \eqref{pgsm}, \eqref{pgix} and some computations, we get 
\begin{align*}
\int_0^1 l_{1,i-2}(x) x^{\psi} dx = \kO((\log M)^{i-1}M'M^{-1} (M')^{-1-1/\psi}).
\end{align*} 
We now consider
\begin{eqnarray*}
l_{2,i}(x)  & = & \E \left(|S^{o^*}_{i,r,M}|  \mid x_o=x  \right) \\
& = &\sum \limits_{k=0 }^{M-m} \E \left( \sum \limits_{j  =1}^{m+k} |S^{(0,j)}_{i,r,M}|  1(B_{j,k}) \mid  x_o =x, \deg(o)=m+k \right)p(k,x).
\end{eqnarray*}
On the other hand,
\begin{eqnarray*}
& & \E \left( |S^{(0,j)}_{i,r,M}|  1(B_{j,k}) \mid x_{(0,1)}, \ldots, x_{(0,m+k)} \right) \\
& \leq & \left( \frac{m}{x_{(0,j)}} \int _0^{x_{(0,j)}} g_{i-1}(y) dy + \frac{M'-m}{1-x_{(0,j)}^{\psi}} \int _{x_{(0,j)}}^1 g_{i-1}(y) \psi y^{\psi-1} dy \right) p(M',x_{(0,j)})\\
& = & \kO((\log M)^{i-1}   M'M^{-1} p(M',x_{(0,j)})).
\end{eqnarray*}
Here, we used \eqref{pgsm} to estimate the second term. Therefore
\begin{align*}
\int_0^1 l_{2,i}(x) x^{\psi} dx = \kO((\log M)^{i-1}M'M^{-1} (M')^{-1-1/\psi}).
\end{align*}
  We then conclude that   
\begin{eqnarray*}
\E(|S^*_{i,r,M}|1(A \cap B)) &= &\int_0^1 (l_{1,i-2}(x)+l_{2,i}(x)) x^{\psi} dx \\
&=& \kO((\log M)^{i-1}  M'M^{-1} (M')^{-1-1/\psi}).
\end{eqnarray*} 
 Therefore, by \eqref{lab}
 \begin{eqnarray*}
\E_{\Q_2}(|S^*_{i,r,M}|) &=& \kO((\log M)^{i-1}  M'M^{-1} ).
\end{eqnarray*} 
To estimate $\E_{\mathbb{Q}_1}(|T_{i,r,M}|)$, we use \eqref{lo} and the same argument as for $\E_{\mathbb{Q}_2}(|T^*_{i,r,M}|)$. More precisely, we replace $B$ by $\tilde{B} = \{ |S_{1,r,M}|=1\}$,  replace $B_{s,k}$ by
\begin{align*}
\tilde{B}_{s,k} = \{\deg(o)=m+k\} \cap \{\deg((0,s))>M, \deg((0,j)) \leq M \,\, \forall \, \, j \neq s\},
\end{align*}
 replace $\alpha''$ and   $\beta ''$ by  $\alpha$  and $\beta$ respectively. We now have
 \begin{align*}
 & \E(|T_{i,r,M}| 1(A) 1(\tilde{B}) \mid x_o=x) \\
  & =  \E \left( \sum \limits_{j =1 }^{\deg(o)} |T^{(0,j)}_{i-1,r-1,M}| 1(A) 1(\tilde{B}) 1(\deg((0,j)) \leq M) \mid x_o =x \right) \\
 & =  \sum_{k=0}^{M-m} \E\left( \sum \limits_{1 \leq j \neq s \leq m+k}   |T^{(0,j)}_{i-1,r-1,M}|1(\tilde{B}_{s,k}) \mid x_o =x, \deg(o)= m+k \right) p(k,x) \\
 & := \tilde{h}_{i-1}(x).
 \end{align*}
We estimate $\int_0^1 \tilde{h}_{i-1}(x) x^{\psi} dx$ by the same way as for $h_{1,i-2}(x)$, and get the desired result.
\hfill $\square$

 \begin{ack} \emph{
 I am  grateful to my  advisor Bruno Schapira for his help and many suggestions during the preparation of this work. I wish to thank also Daniel Valesin for  showing me a gap in a previous version, and the anonymous referee for carefully reading our manuscript and many valuable comments. }
 \end{ack}


\begin{thebibliography}{99}

\bibitem[BBCS1]{BBCS1}  N. Berger, C. Borgs, J.T. Chayes, A. Saberi. On the spread of viruses on the internet. 
Proceedings of the sixteenth annual ACM-SIAM symposium on discrete algorithms, 301--310, (2005). 

\bibitem[BBCS2]{BBCS2}  N. Berger, C. Borgs, J.T. Chayes, A. Saberi. { \it Asymptotic behavior and distributional limits of preferential attachment graphs}, Ann. Probab. 42, 1--40 (2014).

\bibitem[BH]{BH} A. Bovier, F. Hollander. {\it Metastability, a potential-theoretic approach}. Grundlehren de Mathematischen Wissenschaften  {\bf{351}}, Springer (2015).

\bibitem[BMR]{BMR}  S. Bubeck, E. Mossel, M.Z. R\'acz. { \it On the influence of the seed graph in the preferential attachment model}, arXiv:1401.4849v3.

\bibitem[CD]{CD}  S. Chatterjee, R. Durrett. {\it Contact process on random graphs with degree power law distribution has critical value zero}, Ann. Probab. 37, 2332--2356 (2009). 

\bibitem[CGOV]{CGOV}  M. Cassandro, A. Galves, E. Oliveri, M. Vares. {\it Metastable behaviour of stochastic dynamics: a pathwise approach}, J. Stat. Phys.  35, 603--634 (1984). 


\bibitem[CL]{CL} F. Chung, L. Lu. {\it Concentration inequalities and martingale inequalities: a survey}, Internet Math. 3, 79--127 (2006).

\bibitem[CMMV]{CMMV} M. Cranston,  T. Mountford, J.-C. Mourrat, D. Valesin. {\it The contact process on finite homogeneous trees revisited}, ALEA Lat. Am. J. Probab. Math. Stat. {\bf 11}, 385--408, (2014). 

\bibitem[C]{C} V. H. Can. {\it Super-exponential extinction time of the contact process on random geometric graphs}, arXiv:1506.02373.

\bibitem[CS]{CS} V.H. Can,  B. Schapira. {\it Metastability for the contact process on the configuration model with infinite mean degree}, Electron. J. Probab. {\bf 20} (2015), no. 26, 1--22.

\bibitem[D]{D} R. Durrett. {\it Random Graph Dyamics.} Cambridge Univ. Press, Cambridge (2007).

\bibitem[DJ]{DJ}   R. Durrett, P. Jung. {\it Two phase transitions for the contact process on small worlds}, Stoch. Proc. Appl. 117, 1910--1927 (2007). 

\bibitem[DV]{DV}  D. J. Daley, D. Verre-Jones.  {\it An introduction to the theory of point processes}, Vol I. Probability and Applications,  Springer (2003). 


\bibitem[DVH]{DVH} S. Dommers, R. Van der Hofstad, G. Hooghiemstra. {\it Diameters in  preferential attachment models},  J. Stat. Phys. 139, 72--107 (2010).

\bibitem[H]{V}  R. Van der Hofstad. {\it Random graphs and complex networks}. Vol II. Available at http://www.win.tue.nl/~rhofstad. 


\bibitem[L]{L}  T.M. Liggett. {\it Stochastic Interacting Systems: Contact, Voter and Exclusion Processes.} Grundlehren de Mathematischen Wissenschaften  {\bf{324}}, Springer (1999). 

\bibitem[LS]{LS} S. Laley, W. Su. {\it Contact processes on random regular graphs}, arXiv:1502.07421.

\bibitem[M]{M} T. Mountford. {\it A metastable result for the finite multidimensional contact process}, Canad. Math. Bull.  36  (2), 216--226 (1993).

\bibitem[MMVY]{MMVY} T. Mountford, J.-C. Mourrat, D. Valesin, Q. Yao. {\it Exponential extinction time of the contact process on finite graphs}, Stochastic Process. Appl. 126 (7), 1974--2013 (2016).

\bibitem[MVY]{MVY}  T. Mountford, D. Valesin, Q. Yao. {\it Metastable densities for contact processes on random graphs}, Electron. J. Probab. {\bf 18} (2013), no. 103, 1--36.  

\bibitem[MV]{MV} J.-C. Mourrat, D. Valesin. {\it Phase transition of the contact process on random regular graphs},  Electron. J. Probab. {\bf 21} (2016), no. 31, 1--17.

\bibitem[OV]{OV} E. Olivieri, M. E. Vares. {\it Large Deviations and Metastability}.  Cambridge University Press (2005).
\bibitem[P]{P} R. Pemantle. {\it The contact process on trees}, Ann. Probab. {\bf 20} (1992), no. 4, 2089--2116.

\end{thebibliography}
\end{document}